\font\tenmsb=msbm5    \textfont\msbfam=\tenmsb \font\sevenmsb=msbm5
\font\fivemsb=msbm5
\font\tenbig=msbm5 scaled \magstep2   \textfont\bigfam=\tenbig
\font\sevenbig=msbm7 scaled \magstep2 \scriptfont\bigfam=\sevenbig
\font\fivebig=msbm5 scaled \magstep2
\numberwithin{equation}{section}
\newtheorem{theorem}{Theorem}[section]
\newtheorem{lemma}{Lemma}[section]
\newtheorem{remark}{Remark}[section]
\newtheorem{definition}{Definition}[section]
\begin{document}
\title{\bf Multilinear Commutators of Multilinear Square Operators Associated with  New $BMO$ Functions and   New Weight Functions}
\author{\bf Chunliang Li,  Shuhui Yang$^*$ and Yan Lin}

\renewcommand{\thefootnote}{}
\date{}
\maketitle
\footnotetext{2020 Mathematics Subject Classification. 42B25, 42B35}
\footnotetext{Key words and phrases. New multilinear square operator 
with generalized kernel, $ A_{\vec p}^{\theta }(\varphi )$ weight, multilinear commutator, multilinear iterative commutator}
\footnotetext{This work was partially supported by the National 
Natural Science Foundation of China No. 12071052.}
\footnotetext{$^*$Corresponding author, E-mail:yangshuhui@student.cumtb.edu.cn}
\begin{minipage}{13.5cm}
	
{\bf Abstract}
\quad
Via the new weight $ A_{\vec p}^{\infty } (\varphi ) $ 
and the new $ BMO $ function, the authors introduce a new class of 
multilinear square operators $ T $ with generalized kernels. The boundedness of 
multilinear commutators and multilinear iterative commutators 
generated by $ T $ and the new $ BMO $ function on weighted Lebesgue spaces 
and weighted Morrey spaces is obtained, respectively. The results of this 
article contain some known conclusions.
\medskip
\end{minipage}

\section{Introduction}

\quad\quad The weighted inequality plays a very significant role in harmonic 
analysis. It is of great consequence in the estimation of nonlinear 
partial differential equations and vector-valued inequalities. Numerous 
scholars have developed a great research interest in the weighted inequality.   
Muckenhoupt \cite{b17} first proposed the classical Muckenhoupt’s $ A_p $ 
weight functions and established weighted norm inequalities for Hardy 
maximal functions. 

For $1<p<\infty$, $ 1/p+1/p'=1 $, $\omega $ is a nonnegative and locally 
integrable function. The weight $\omega $ is said to be in the class $ A_p $ 
if there holds
$$\left(\frac{1}{|Q|}\int_Q\omega(y)dy\right)^{\frac{1}{p}}
\left(\frac{1}{|Q|}\int_Q\omega(y)^{\frac{-1}{p-1}}dy\right)^
{\frac{1}{p'}}\leq C,$$
for all cube $ Q $. When $ p=1 $, we have
$$\left(\frac{1}{|Q|}\int_Q\omega(y)dy\right)\leq C\mathop{\inf}\limits_{x\in 
	{ Q}}\omega(x).$$
Garc\'{i}a-Cuerva and Rubio \cite{b7} studied the 
classical Muckenhoupt’s $ A_p $ weight functions and some of the associated 
properties. Tang \cite{b24} introduced a new class of weighted functions $ 
A_{p}^{\infty }(\varphi )$ and obtained weighted norm inequalities for 
pseudo-differential operators with smooth symbols and their commutators. For 
some other studies of $ A_{p}^{\infty }(\varphi )$, one can see  
\cite{b9,b10,b14} and related references.

For $ 1< p< \infty $, $ 1/p+1/p'=1$ and $\theta\geq0$, $\omega $ is a 
nonnegative and locally integrable function. We say that $\omega \in 
A_p^{\theta}(\varphi)$ if there holds
$$\left(\frac{1}{\varphi(Q)^{\theta}|Q|}\int_Q\omega(y)dy\right)^{\frac{1}{p}}
\left(\frac{1}{\varphi(Q)^{\theta}|Q|}\int_Q\omega(y)^{\frac{-1}{p-1}}dy\right)^
{\frac{1}{p'}}\leq C,$$
for all cubes $Q$. When $p=1$, we conclude
$$\left(\frac{1}{\varphi(Q)^{\theta}|Q|}\int_Q\omega(y)dy\right)\leq 
C\mathop{\inf}\limits_{x\in { Q}}\omega(x),$$
where $Q(x,r)$ is the cube centered at $x$ with the sidelength $r$ and  
$\varphi(Q):=1+r$.

\begin{remark}
Let $A_p^{\infty}(\varphi):= {\textstyle \bigcup_{\theta \ge 0}} 
A_p^{\theta}(\varphi)$, and $A_{\infty}^{\infty}(\varphi):= {\textstyle 
\bigcup_{p\ge 1}} A_p^{\infty}(\varphi)$. When $ \theta =0$, the class $ 
A_p^0(\varphi) $ and $ A_p $ are equivalent for all $1\leq p<\infty$. In 
general, the class $A_p$ is strictly smaller than the class  
$A_p^{\theta}(\varphi)$ for all $1\leq p<\infty$. Taking for instance, $ x\in 
\mathbb{R}^n $, $ \omega(x):=1+|x|^\alpha  $ and $ \alpha >n(p-1) $, the 
weight belongs to $ A_{p}^{\infty } (\varphi )$, but it does not belong to $ A_p 
$.
\end{remark}
Lerner et. al. \cite{b12} introduced a new class of weighted 
functions $ A_{\vec p}$. For some other studies of $ A_{\vec p}$, one can refer 
to \cite{b15,b21} and so on.

Let $\vec{p}=(p_1,\ldots,p_m)$, $1/p= {\textstyle \sum_{j=1}^{m}}  1/p_j$, and  
 $1\leq p_1,\ldots,p_m<\infty$. Given 
$\vec{\omega}=(\omega_1,\ldots,\omega_m)$, each $ \omega_j $ being 
nonnegative measurable, set $v_{\vec{\omega}}:=  {\textstyle 
\prod_{j=1}^{m}}\omega_j^{p/p_j}.$ We say that $\vec\omega\in A_{\vec{p}}$, if 
$$\mathop{\sup}\limits_{
	Q}\left(\frac{1}{|Q|}\int_Qv_{\vec{\omega}}(x)dx\right)^{\frac1p}
\prod_{j=1}^{m}\left(\frac{1}{|Q|}\int_Q\omega_j(x)^{1-p'_j}dx\right)^{\frac{1}
	{p'_j}}<\infty,$$
where the supremum is taken over all cubes $Q\subset \mathbb{R}^n$, and the 
term $\left(\frac{1}{|Q|}\displaystyle \int_Q 
\omega_j(x)^{1-p'_j}dx\right)^{\frac{1}{p'_j}}$ is understood by  
$\left(\mathop{\inf}\limits_{x\in { Q}}\omega_j(x)\right)^{-1}$ 
when $p_j=1$, $j=1,\ldots, m$. 

Bui \cite{b2} introduced a new class of multiple weights $ A_{\vec 
p}^{\infty } (\varphi )$. For some other studies of $ A_{\vec p}^{\infty } 
(\varphi ) $, one can see, for instance, \cite{b19,b28}.

Let $\vec{p}=(p_1,\ldots,p_m)$, $\theta\geq0$ and $1/p= {\textstyle 
\sum_{j=1}^{m}}  1/p_j$ with $1\leq p_1,\ldots,p_m<\infty$. Given 
$\vec{\omega}=(\omega_1,\ldots,\omega_m)$, each $ \omega_j $ being 
nonnegative measurable, set $v_{\vec{\omega}}:=  {\textstyle 
\prod_{j=1}^{m}}\omega_j^{p/p_j}.$ We say that $ \vec\omega $ satisfies the $ 
A_{\vec{p}}^{\theta}(\varphi) $ 
condition and write $\vec\omega\in A_{\vec{p}}^{\theta}(\varphi)$, if
$$\mathop{\sup}\limits_{ Q} 
\left(\frac{1}{\varphi(Q)^{\theta}|Q|}\int_Qv_{\vec{\omega}}(x)dx\right)^{\frac1p}
\prod_{j=1}^{m}\left(\frac{1}{\varphi(Q)^{\theta}|Q|}\int_Q\omega_j(x)^{1-p'_j}
dx\right)^{\frac{1}{p'_j}}<\infty,$$
where the supremum is taken over all cubes $Q\subset \mathbb{R}^n$. When 
$p_j=1$, $j=1,\ldots, m$, the term 
$\left(\frac{1}{|Q|}\displaystyle \int_Q  
\omega_j(x)^{1-p'_j}dx\right)^{\frac{1}{p'_j}}$ is understood by  
$\left(\mathop{\inf}\limits_{x\in { Q}}\omega_j(x)\right)^{-1}$. 
\begin{remark}
For $1\leq p_1,\ldots,p_m<\infty$, we write 
$A_{\vec{p}}^{\infty}(\varphi):= {\textstyle \bigcup_{\theta \ge 0}} 
A_{\vec{p}}^{\theta}(\varphi)$. When $\theta=0$, $A_{\vec{p}}^{0}(\varphi)$ and 
$A_{\vec{p}}$ are equivalent, where the $A_{\vec{p}}$ weighted function has 
been studied in {\rm \cite{b12}}.
\end{remark}

Based on the weight function described above, a large amount of scholars have 
studied a class of multilinear Calder\'{o}n-Zygmund operators. Pan and Tang 
\cite{b19} obtained the boundedness of certain classes of multilinear  
Calder\'{o}n-Zygmund operators with classical kernels and their iterated 
commutators with new $ BMO $ functions (see Definition \ref{definition1.1}) on 
weighted Lebesgue spaces with   
$A_{\vec{p}}^{\infty}(\varphi)$ weights. Zhao and Zhou \cite{b29} introduced a class 
of multilinear Calder\'{o}n-Zygmund operators with kernels of Dini's type. They 
obtained strong and weak boundedness of multilinear Calder\'{o}n-Zygmund 
operators with kernels of Dini's type on weighted Lebesgue spaces, as well as 
weighted norm inequalities for iterative commutators generated by multilinear 
Calder\'{o}n-Zygmund operators and $ BMO $ functions with 
$ A_{\vec{p}}^{\infty }(\varphi) $ weights.
\begin{definition}{\rm(\cite{b1})}\label{definition1.1}
The new $ BMO $ space $  BMO_{\theta  }(\varphi ) $ with $ \theta \ge 0 $  is 
defined as a set of all locally integrable functions $ b $ satisfying,
\begin{align*}
\Vert 
b\Vert_{BMO_{\theta}(\varphi)}:=\mathop{\sup}\limits_Q\frac{1}{\varphi(Q)^{\theta}
|Q|}\int_Q|b(y)-b_Q|dy<\infty.
\nonumber
\end{align*}
\end{definition}
When $ \theta=0 $, 
$BMO_\theta(\varphi)=BMO(\mathbb{R}^n)$. $BMO(\mathbb{R}^n) 
\subset BMO_{\theta}(\varphi)$ and 
$BMO_{{\theta}_1}(\varphi) \subset BMO_{{\theta}_2}(\varphi)$ for 
${\theta}_1\leq {\theta}_2$. Note $BMO_{\infty}(\varphi):=  {\textstyle 
\bigcup_{\theta \ge 0}} BMO_{\theta}(\varphi)$.

Let $ \vec \theta =(\theta _1,\dots ,\theta _m) $ with $ \theta _1,\dots 
,\theta _m\ge 0 $, and $ \vec b=(b_1,\dots ,b_m) $. If $ b_j\in BMO_{\theta 
_j}(\varphi ) $ for $ 1\le j\le m $, then $ \vec b\in BMO_{\vec \theta }^{m} 
(\varphi ) $.


Via $A_{\vec p}^{\theta }(\varphi )$ weight functions and the standard 
multilinear square operator, we first introduced the 
new multilinear square operator. The new multilinear square operator with 
classical kernel is defined as follows. For any $ t\in (0,\infty ) $, let $ 
K_t(y_0,y_1,\ldots,y_m) $ be a locally integrable function defined away from 
the diagonal $y_0=y_1=\ldots=y_m$. For some 
positive constants $ C $ and $\gamma  $, any $ N\ge 0 $,  it satisfies the 
following size condition,
\begin{equation}\label{1.1}	
\left (  \int_{0}^{\infty}|K_{t}(y_0, y_{1}, \ldots, y_{m})|^{2} \frac{d 
t}{t}\right ) ^{1 / 2} \leq 
\frac{C}{(\sum_{j=1}^m|y_0-y_j|)^{mn}(1+\sum_{j=1}^m|y_0-y_j|)^N},
\end{equation}
and the smoothness condition
\begin{align*}
&\left(\int_{0}^{\infty}\left|K_{t}\left(y_0, y_{1},\ldots, 
y_{m}\right)-K_{t}\left(y_0', y_{1},\ldots, y_{m}\right)\right|^{2} 
\frac{d t}{t}\right)^{1 / 2} \\
& \quad \leq \frac{C|y_0-y_0'|^{\gamma }}{(\sum_{j=1}^m|y_0-y_j|)^{mn+\gamma 
}(1+\sum_{j=1}^m|y_0-y_j|)^N},
\end{align*}
whenever $|y_0-y_0'|\leq\frac{1}{2}\max_{1\leq j \leq m}|y_0-y_j|$.

Suppose that $\omega:[0,\infty) \mapsto  [0,\infty)$ is a nondecreasing 
function with $0<\omega(t)<\infty$ for all $t\in [0,\infty )$. For $a>0$, one says that $\omega\in {\rm 
	Dini}(a)$, if
\begin{align*}
[\omega]_{{\rm Dini}[a]}:=\int^{1}_{0}\frac{\omega^a(t)}{t}dt<\infty.  
\end{align*}

The new multilinear square operator with kernel of Dini's type is defined as 
follows, where the size condition \eqref{1.1} of kernel functions  remains 
unchanged and the smoothness condition is changed by
\begin{equation}\label{1.2}	
\begin{aligned}
&\left(\int_{0}^{\infty}\left|K_{t}\left(y_0, y_{1},\ldots, 
y_{m}\right)-K_{t}\left(y_0', y_{1}, \ldots, y_{m}\right)\right|^{2} 
\frac{d t}{t}\right)^{1 / 2} \\
&  \quad \leq 
\frac{C}{(\sum_{j=1}^m|y_0-y_j|)^{mn}(1+\sum_{j=1}^m|y_0-y_j|)^N}\omega 
\left( \frac{|y_0-y_0'|}{\sum_{j=1}^m |y_0-y_j|}\right),
\end{aligned}
\end{equation}
whenever $|y_0-y_0'|\leq\frac{1}{2}\max_{1\leq j \leq m}|y_0-y_j|$.
\begin{remark}
	When  $ \omega (t):=t^\gamma   $ for some $ \gamma  > 0$ in the condition 
	\eqref{1.2},  the new multilinear 
	square operator with kernel of 
	Dini’s type is the new multilinear square operator with classical kernel in 
	{\rm \cite{b27}}.
\end{remark}
The size condition of kernel functions remains unchanged and the  smoothness 
condition of kernel functions can be further weakened.  
\begin{definition}\label{definition1.3}
Let $ 
K_t(x,y_1,\ldots ,y_m):=(1/t^{mn})K(x/t,y_1/t,\ldots,y_m/t)$, for any $ 
t\in (0,\infty )$ and $K(y_0,y_1,\ldots,y_m)$ be a locally integrable function defined away from the diagonal $y_0=y_1=\ldots=y_m$ in $(\mathbb{R}^n)^{m+1}$. The new multilinear square function is defined by
\begin{equation}\label{1.3}	
\begin{aligned}
T(\vec{f})(x):={\left( {\int_0^\infty  {{{\left|{\int_{{{\left({\mathbb{R}^n} 
\right)}^m}} {{K_t}\left( {x,{y_1},\ldots ,{y_m}} \right) \cdot 
\prod_{j=1}^{m}f_{j}\left(y_{j}\right)d{\vec y}} }
\right|}^2}\frac{{dt}}{t}} } \right)^{\frac{1}{2}}},
\end{aligned}
\end{equation}

\noindent whenever $x\notin  {\textstyle \bigcap_{j=1}^{m}}  {\rm supp } f_j$ 
and each $ f_{j} \in C_{c}^{\infty}\left(\mathbb{R}^{n}\right) $.

If the following conditions are satisfied, $T$  is called the new multilinear 
square operator with generalized kernel.

\begin{itemize}
\item [\rm (i)]There is a positive constant $ A>0 $ such that for any $N\geq 
0$, the kernel function satisfies the size condition, 
\begin{align}\label{1.4}
&\left (  \int_{0}^{\infty}|K_{t}(y_0, y_{1}, \ldots, y_{m})|^{2} \frac{d 
t}{t}\right ) ^{1 / 2} \leq 
\frac{A}{(\sum_{j=1}^m|y_0-y_j|)^{mn}(1+\sum_{j=1}^m|y_0-y_j|)^N}.
\end{align}

\item [\rm (ii)]There are positive constants $C_k>0$, $k\in \mathbb{N}_+$, $ 1< q\le 2 $, 
$1/q+1/q'=1$ such that for any $N\geq 0$, the kernel function satisfying the 
smoothness condition,
\begin{align}\label{1.5}	
&\left(\int_{0}^{\infty}\left(\int_{\left(\Delta_{k+2}\right)^{m} 
\backslash\left(\Delta_{k+1}\right)^{m}}\left|K_{t}\left(y_{0}, 
y_{1}, \ldots, y_{m}\right)-K_{t}\left(y_{0}^{\prime}, y_{1}, \ldots, 
y_{m}\right)\right|^{q} d \vec{y}\right)^{\frac{2}{q}} \frac{d 
t}{t}\right)^{\frac{1}{2}}\nonumber\\
& \quad\leq CC_k|y_0-y_o'|^{-\frac{mn}{q'}}\left(1+2^k |y_0-y_0'| 
\right)^{-N}2^{-\frac{kmn}{q'}},
\end{align} 
where $\Delta_k:=Q(y_0, 2^{k}\sqrt{mn}|y_0-y_0'|)$.
\item [\rm (iii)]$T$ can be extended to be a 
bounded operator from $L^{s_1}\times\cdots\times L^{s_m}$ to $L^{s,\infty}$ for 
some $1\le s_j\le q' ,j=1,\ldots ,m $ with	$1/s=1/s_1+\ldots +1/s_m$.
\end{itemize}
\end{definition}

\begin{remark}
The class of generalized kernels includes the kernels of Dini’s type, when 
$C_k:=\omega(2^{-k})$. One can see {\rm \cite{b13}} for details. 
\end{remark}

Xue and Yan \cite{b27} obtained the strong $ (L^{p_1}(\omega _1)\times 
\cdots\times L^{p_m}(\omega _m),L^{p}(\nu  _{\vec \omega }) ) $ and weak $ 
(L^{p_1}(\omega _1)\times \cdots\times L^{p_m}(\omega _m),L^{p,\infty }(\nu  
_{\vec \omega }) ) $ type boundedness of multilinear square operators (see 
\eqref{1.1}) with classical kernels on weighted Lebesgue spaces. 
Si and Xue \cite{b22} proved weighted norm inequalities for multilinear 
square functions with kernels of Dini’s type with $ 
A_{\vec p} $ weights.
For related research on  the theory of multilinear square functions, one can 
see \cite{b3,b4,b5,b8} and related references.

Inspired by the above new weight functions $A_{\vec p}^{\theta 
}(\varphi )$, new $ BMO $ functions and multilinear square operators. 
We introduce the new class of multilinear square operators (see Definition 
\ref{definition1.3}) with generalized kernels, and establish the boundedness of 
multilinear commutators and  multilinear iterative commutators generated by new 
multilinear square operators and new $ BMO $ functions on the weighted 
Lebesgue spaces and new Morrey spaces, respectively.

\begin{definition}
Let $\vec{b}=(b_1,\ldots,b_m)$ be a family of locally integrable functions.  
The multilinear commutator $ T_{\sum \vec{b}} $  
generated by new multilinear square operators are defined as follows.
\begin{equation}
T_{\sum \vec{b}}(\vec{f})(x)=\sum_{j=1}^mT_{b_j}^j(\vec{f})(x):=\sum_{j=1}^mT\left ( f_1,\dots,\left ( b_j(x)-b_j \right )f_j,\dots ,f_m   \right ) (x).
\nonumber
\end{equation}
	
The multilinear iterative commutator $ T_{\prod\vec{b}} $ generated by new 
multilinear square operators are defined as follows.
\begin{equation}
T_{\prod\vec{b}}(\vec f)(x):=T\left ( \left ( b_1(x)-b_1 \right )f_1,\dots,\left ( b_m(x)-b_m \right )f_m  \right ) (x).
\nonumber
\end{equation}
\end{definition}

Morrey \cite{b16} first introduced the classical Morrey space to research the 
local behavior of solutions to second-order elliptic partial differential 
equations. Subsequently, the problem of studying the boundedness of operators 
on Morrey spaces has attracted great interest from many authors. Tang and Dong 
\cite{b25} obtained the boundedness of some Schr\"{o}dinger type operators on 
Morrey spaces related to certain nonnegative 
potentials. Komori and Shirai \cite{b11} studied boundedness of the 
singular integral operator on weighted Morrey spaces. Pan and Tang \cite{b18} 
obtained boundedness of some Schr\"{o}dinger type operators on weighted 
Morrey spaces. Trong and Truong \cite{b26} established the weighted norm 
inequalities for the Riesz transforms associated to Schr\"{o}dinger operators.  
Zhao and Zhou \cite{b28} studied new weighted norm inequalities for certain 
classes of multilinear operators on Morrey spaces. Next, the definition of 
weighted Morrey spaces and the new weighted Morrey space are given, 
respectively.

\begin{definition}{\rm(\cite{b26})} 
Suppose that $ u $, $ w $ are two weights, $ \lambda \in [0,1) $, $ 1\le p< 
l\le \infty  $, $ \alpha \in (-\infty ,\infty ) $, and $ Q:=Q(z,r)$. The 
strong Morrey space $ M_{\alpha ,\lambda }^{p,l} (u,w) $ consists of  
 any measurable function $ f $ satisfying $ \left \| f \right \| 
_{M_{\alpha ,\lambda }^{p,l} (u,w)} < \infty $, where
\begin{align*}
\|f\|_{M_{\alpha, \lambda}^{p, l}(u, w)}:=\sup 
_{r>0}\left[\int_{\mathbb{R}^{n}}\left(\varphi  
(Q(z,r))^{\alpha}u(Q(z,r))^{-\lambda}\left\|f\chi_{Q(z,r)}\right\|_{L^{p}(w)}\right)^{l} 
d z\right]^{1 / l}.
\end{align*}
The weak Morrey space $ WM_{\alpha ,\lambda }^{p,l} (u,w) $ consists of any measurable function $ f $ satisfying $ \left \| f \right \| 
_{WM_{\alpha ,\lambda }^{p,l} (u,w)} < \infty $, where
\begin{align*}
\|f\|_{WM_{\alpha, \lambda}^{p, l}(u, w)}:=\sup 
_{r>0}\left[\int_{\mathbb{R}^{n}}\left(\varphi  
(Q(z,r))^{\alpha}u(Q(z,r))^{-\lambda}\left\|f \chi_{Q(z,r)}\right\|_{L^{p,\infty 
}(w)}\right)^{l}dz\right]^{1 / l}.
\end{align*}
\end{definition}
In particular, when $ \alpha =0 $ and $ u=w=1 $, the space $ M_{\alpha ,\lambda 
}^{p,l} (u,w) $ recover the space $ (L^p,L^l)^\lambda  $ defined in \cite{b6}. 
When $ \alpha =0 $, $ u=w=1 $, and $ l=\infty $, the space $ M_{\alpha ,\lambda 
}^{p,l} (u,w) $ is the Morrey space $ M_{\lambda /p}^{p} ({\mathbb{R}^n})$ in 
\cite{b20}. 
When $ u=w=1 $ and $ l=\infty  $, the $ M_{\alpha ,\lambda }^{p,l} 
(u,w) $ and the Morrey space $ L_{\alpha p,V }^{p,np\lambda } (\mathbb{R}^n) $ 
defined in \cite{b25} are equivalent by Dong and Tang. When $ l=\infty  $ and $ 
u=w $, the $ M_{\alpha ,\lambda }^{p,l} (u,w) $ corresponds to the $ L_{\alpha 
,V }^{p,\lambda } (w) $ introduced by Pan and Tang in \cite{b18}. When $ u=w $, 
$ M_{\alpha, \lambda}^{ p, l}(u, w) $ and $ WM_{\alpha, \lambda}^{ p, l}(u, w)$  are denoted by   $M_{\alpha, \lambda}^{ p, l}(w) $ and $WM_{\alpha, \lambda}^{ p, l}(w) $, 
respectively.

\begin{definition}{\rm(\cite{b28})} 
Suppose that $ u $ is a weight, $ \vec w=(w_1,\ldots ,w_m) $ is a  multiple  
weight, $ \lambda \in [0,1) $, $ 1< l\le \infty  $, $\alpha \in (-\infty 
,\infty )  $, $ \vec p=(p_1,\ldots ,p_m) $, and $ Q:=Q(z,r)$. The new 
strong Morrey space $ M_{\alpha ,\lambda }^{\vec p,l} (u,\vec w) $ is defined 
as the set of all vector-valued measurable functions $ \vec f=(f_1,\ldots 
,f_m) $ satisfying $\| \vec f \| _{M_{\alpha ,\lambda }^{\vec p,l} (u,\vec w)} 
< \infty $, where
\begin{align*}
\|\vec f\|_{M_{\alpha, \lambda}^{\vec p, l}(u, \vec w)}:=\sup 
_{r>0}\left[\int_{\mathbb{R}^{n}}\left(\varphi  
(Q(z,r))^{\alpha}u(Q(z,r))^{-\lambda}\prod_{j=1}^{m} \left\|f_j 
\chi_{Q(z,r)}\right\|_{L^{p_j}(w_j)}\right)^{l} d z\right]^{1 / l}.
\end{align*}
The new weak Morrey space $ WM_{\alpha ,\lambda }^{\vec p,l} (u,\vec w) $ is 
defined as the set of all vector-valued measurable functions $ \vec 
f=(f_1,\ldots ,f_m) $ satisfying $\| \vec f  \| _{WM_{\alpha ,\lambda }^{\vec 
p,l} (u,\vec w)} < \infty$, where
\begin{align*}
\|\vec f\|_{WM_{\alpha, \lambda}^{\vec p, l}(u, \vec w)}
&:=\sup _{r>0}\left[\int_{\mathbb{R}^{n}}\left(\varphi  
(Q(z,r))^{\alpha}u(Q(z,r))^{-\lambda}\prod_{j=1}^{m} \left\|f_j 
\chi_{Q(z,r)}\right\|_{L^{p_j,\infty }(w_j)}\right)^{l} d z\right]^{1/l 
}
\end{align*}
for $  p_j\ge 1 $, $ j=1,\dots ,m $. 
\end{definition}
When $ m=1 $, the new strong Morrey space 
$ M_{\alpha ,\lambda }^{\vec p,l} (u,\vec w) $ and the new weak Morrey space $ 
WM_{\alpha ,\lambda }^{\vec p,l} (u,\vec w) $ are equivalent to the space $ 
M_{\alpha, \lambda}^{ p, l}(u, w) $ and $ WM_{\alpha, \lambda}^{ p, l}(u, w) $, 
respectively.

\begin{definition}\label{definition1.6}
Let $C^{m}_j$ be a family of the finite subsets $\xi =\left \{ \xi (1),\dots ,\xi (j) \right \} $ consisting of $j$ distinct elements from the set $\left \{ 1,\dots ,m \right \} $ for $j,m\in \mathbb{N}^+$ and $1\le j\le m$. In addition, if $a< b$, then $\xi (a)< \xi (b)$. For any $\xi \in C^{m}_j$, the complementary sequence of $\xi$, denoted as $\xi^{'}$, is defined as $\left \{ 1,\dots ,m \right \}\setminus \xi $. In particular, $C^{m}_0=\phi $. For any m-fold sequence $\vec b$ and $\xi \in C^{m}_j$, the j-fold sequence $\vec b_\xi =\left \{b_{\xi(1)},\dots   b_{\xi(j)}\right \} $ is a finite subset of $\vec b =(b_1,\dots ,b_m)$.
\end{definition}
In Figure 1, the relationship between the new class of multilinear square 
operators and the 
classical multilinear square operators, $BMO$ functions and new $BMO$ 
functions, new multiple weights and multiple weights are clearly explained, and the connection between our work and previous research is also 
introduced.

\begin{figure}[H]
\centering
\includegraphics[width=1\linewidth]{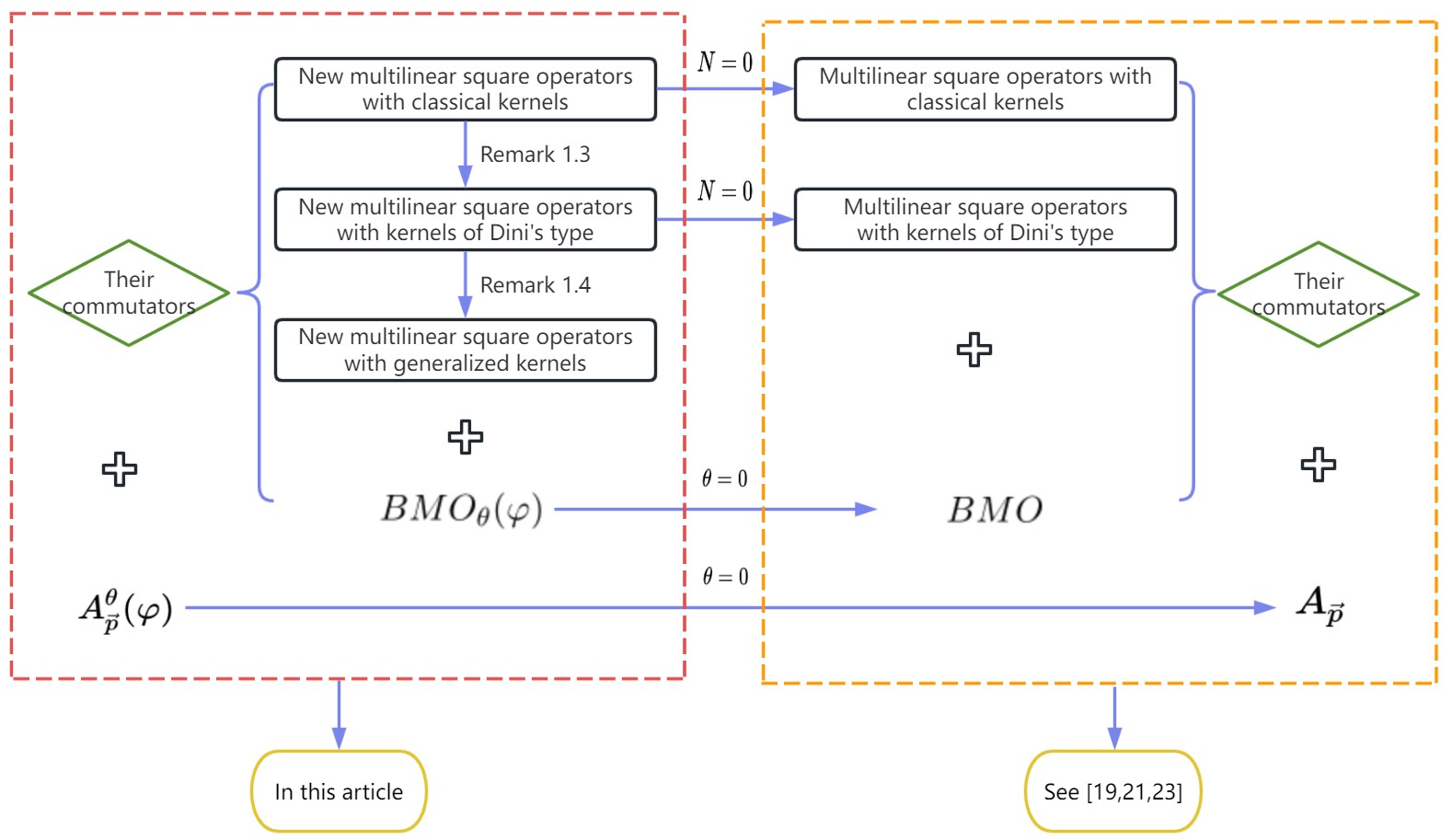} 
\caption{ Relation among of several classes of multilinear square operators, $BMO$ functions, and 
weight functions}
\label{FIG1}
\end{figure}
This article will be organized as follows. In Section \ref{sec2}, we will state 
our main theorems for new weighted norm inequalities of $ T_{\sum \vec{b}} 
$ and $ T_{\prod\vec{b}} $. In Section \ref{sec3}, we will list the necessary 
lemmas and definitions. In Section \ref{sec4}, we will give the proof of all  
theorems.

 In this article all notations are defined as needed. $|E|$ 
denotes the Lebesgue measure of $E$, $\chi_E$ stands for the characteristic 
function of $E$, $Q(x,r)$ is the cube centered at $x$ with the sidelength $r$, the cubes $Q$ must be with side parallel to the axes, $f_Q$ denotes the average $f_Q:=\frac{1}{|Q|} \int_{Q}f(y)dy$, and we define $ 
w(E):=\int_Ew(y)dy$ with the weight $ w $ and the measurable set $ E $.

\section{ Main Theorems}\label{sec2}

\quad\quad  In this section, we give the main theorems of this article. Theorem 
\ref{Theorem2.1} and Theorem \ref{Theorem2.2} prove the 
boundedness of multilinear  commutators and multilinear iterative commutators 
on the weighted Lebesgue spaces, respectively. 

\begin{theorem}\label{Theorem2.1} 
Suppose that $m\geq2$, $T$ is the new multilinear square operator with 
generalized kernel as in Definition \ref{definition1.3}. Let ${\textstyle 
\sum_{k=1}^{\infty }}  k C_k<\infty$, 
$ \vec{\theta}=(\theta_1,\ldots,\theta_m) $, $ \theta_j\geq 0 $, 
$ j=1,\ldots,m $,  $\vec{p}=(p_1,\ldots,p_m)$, $1/p=1/p_1+\ldots+1/p_m$,   
$\vec \omega=(\omega_1,\ldots,\omega_m)\in 
A_{\vec{p}/q'}^\infty(\varphi)$, $v_{\vec\omega}= {\textstyle \prod_{j=1}^{m}} 
\omega_j^{p/p_j}$, and $\vec{b}\in BMO_{\vec\theta}^m(\varphi)$. If 
$q'<p_j<\infty$, $ j=1,\ldots ,m $, there exists a positive constant $ C $ such 
that 
\begin{align*}
\|T_{\sum  \vec b}(\vec f)\|_{L^{p}(v_{\vec\omega})}\le C\sum_{j=1}^{m} \| b_j 
\|_{BMO_{\theta_j}(\varphi)}
\prod_{j=1}^{m}\| f_j \|_{L^{p_j}(\omega_j)}.
\nonumber
\end{align*}
\end {theorem}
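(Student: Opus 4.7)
The plan is to reduce matters to a pointwise sharp-maximal-function bound tailored to the new $BMO$ space and then invoke an $A_{\vec p}^{\infty}(\varphi)$-adapted Fefferman-Stein inequality. Writing $T_{\sum\vec b}(\vec f)(x)=\sum_{j=1}^{m} T^{j}_{b_{j}}(\vec f)(x)$, by symmetry it is enough to estimate $T^{1}_{b_{1}}$. For a fixed cube $Q\ni x$ with a suitable dilate $Q^{\ast}$, the identity $b_{1}(x)-b_{1}(y_{1})=(b_{1}(x)-\lambda)-(b_{1}(y_{1})-\lambda)$ with $\lambda:=(b_{1})_{Q^{\ast}}$ decomposes $T^{1}_{b_{1}}(\vec f)(x)$ into $(b_{1}(x)-\lambda)\,T(\vec f)(x)$ and $T((b_{1}-\lambda)f_{1},f_{2},\ldots,f_{m})(x)$, which are then analyzed separately.

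The core step is to establish, for suitable $0<\delta<\varepsilon<1/m$, a pointwise bound of the form
\begin{equation*}
M^{\sharp,\theta_{1}}_{\delta,\varphi}\bigl(T^{1}_{b_{1}}(\vec f)\bigr)(x)\;\lesssim\;\|b_{1}\|_{BMO_{\theta_{1}}(\varphi)}\Bigl(M_{\varepsilon}\bigl(T(\vec f)\bigr)(x)+\mathcal{M}_{L(\log L),q'}(\vec f)(x)\Bigr),
\end{equation*}
where $M^{\sharp,\theta_{1}}_{\delta,\varphi}$ denotes the sharp maximal function incorporating the $\varphi(Q)^{\theta_{1}}$ normalization natural to the new $BMO$ space, and $\mathcal{M}_{L(\log L),q'}$ is the $q'$-variant of the multilinear $L(\log L)$ Orlicz maximal operator. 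To prove it, for each $Q\ni x$ with dilate $Q^{\ast}$, I would split $f_{j}=f_{j}\chi_{Q^{\ast}}+f_{j}\chi_{(Q^{\ast})^{c}}$ and expand $T((b_{1}-\lambda)f_{1},f_{2},\ldots,f_{m})$ into $2^{m}$ terms. The purely local term is controlled by the weak-type hypothesis~(iii) of Definition~\ref{definition1.3} combined with Kolmogorov's inequality. Each of the $2^{m}-1$ mixed terms is estimated through the annular decomposition $\{(\Delta_{k+2})^{m}\setminus(\Delta_{k+1})^{m}\}_{k\ge 0}$ together with the $L^{q'}$-smoothness bound \eqref{1.5}; when shifting $BMO$ averages between scales one picks up a factor $|(b_{1})_{Q^{\ast}}-(b_{1})_{2^{k}Q^{\ast}}|\lesssim k\,\varphi(2^{k}Q^{\ast})^{\theta_{1}}\|b_{1}\|_{BMO_{\theta_{1}}(\varphi)}$, which is precisely why the hypothesis $\sum_{k}k\,C_{k}<\infty$ (rather than merely $\sum_{k}C_{k}<\infty$) is required for the resulting annular series to converge.

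Given the pointwise estimate, the theorem follows by the weighted Fefferman-Stein inequality adapted to $A_{\vec p}^{\infty}(\varphi)$ (to be recorded among the lemmas in Section~\ref{sec3}), by the boundedness of $\mathcal{M}_{L(\log L),q'}$ from $L^{p_{1}}(\omega_{1})\times\cdots\times L^{p_{m}}(\omega_{m})$ into $L^{p}(v_{\vec\omega})$ — which is where both the constraint $q'<p_{j}$ and the weight class $\vec\omega\in A_{\vec p/q'}^{\infty}(\varphi)$ enter — and by the previously-established strong-type boundedness of $T$ itself, which controls the $M_{\varepsilon}(T(\vec f))$ term. The main obstacle is the pointwise sharp-maximal estimate: after the $2^{m}$-term expansion, each mixed term must be summed into a geometric series whose convergence hinges on balancing the $\varphi(2^{k}Q)^{\theta_{1}}$ growth produced by the new $BMO$ scale change, the decay $(1+\sum|y_{0}-y_{j}|)^{N}$ built into \eqref{1.4}--\eqref{1.5}, and the kernel coefficients $C_{k}$. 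Arranging this bookkeeping so that only the quantitative hypothesis $\sum_{k}k\,C_{k}<\infty$ is consumed, while the $\varphi(Q)^{\theta_{1}}$ normalization of $M^{\sharp,\theta_{1}}_{\delta,\varphi}$ absorbs the remaining $\varphi$-powers uniformly in $Q$, is the delicate part of the argument.
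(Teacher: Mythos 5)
Your overall strategy matches the paper's: reduce to a single $T^{j}_{b_{j}}$, split $b_{j}(x)-b_{j}(y_{j})=(b_{j}(x)-\lambda)-(b_{j}(y_{j})-\lambda)$ with $\lambda=(b_{j})_{Q^{\ast}}$, prove a pointwise sharp-maximal estimate (the paper's Lemma~\ref{Lemma3.11}), then apply an $A_{\infty}^{\infty}(\varphi)$-adapted Fefferman--Stein bound (Lemma~\ref{Lemma3.6}), the unweighted sharp-maximal estimate for $T$ itself (Lemma~\ref{Lemma3.10}), and the weighted boundedness of the controlling maximal operator (Lemma~\ref{Lemma3.5}). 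Your identification of where $\sum_{k}kC_{k}<\infty$ is used (the extra factor $k$ from shifting $BMO$ averages, as in Lemma~\ref{Lemma3.1}(ii)) is also correct.

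The one substantive divergence is the choice of controlling maximal function. You propose $\mathcal{M}_{L(\log L),q'}$, the $q'$-Orlicz maximal operator. The paper instead exploits the openness of $A_{\vec p/q'}^{\infty}(\varphi)$ (Lemma~\ref{Lemma3.4}(ii)) to pick $l=q'k>q'$ with $\vec\omega\in A_{\vec p/l}^{\infty}(\varphi)$ and $p_{j}/l>1$, and then uses the ordinary power maximal function $\mathcal{M}_{l,\varphi,\eta_{0}}$, whose weighted boundedness is exactly Lemma~\ref{Lemma3.5}. The $L(\log L)$ route is the more classical commutator device, but to make it run here you would need a weighted bound for $\mathcal{M}_{L(\log L),q'}$ in the class $A_{\vec p/q'}^{\infty}(\varphi)$, which is not among the paper's recorded lemmas; the power-maximal approach avoids proving a new Orlicz estimate at the cost of a small amount of room from the openness lemma. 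Both should work in principle, but the paper's route is cleaner given the available toolbox.

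Two further points of precision. First, the sharp maximal function normalization: you propose a sharp maximal operator normalized by $\varphi(Q)^{\theta_{1}}$. The paper's sharp maximal function $M^{\sharp,\triangle}_{\delta,\varphi,\eta}$ is normalized by $\varphi(Q)^{\eta}$ with $\eta=\eta_{0}$ fixed by Lemma~\ref{Lemma3.5}, and the $\theta_{j}$-growth of the new $BMO$ norm is absorbed instead by choosing $\delta,\varepsilon$ small enough that $\eta_{0}>(\max_{j}\theta_{j})/(1/\delta-1/\varepsilon)$; you cannot freely tie the normalization to $\theta_{1}$ because the $\eta_{0}$ needed in Lemma~\ref{Lemma3.5} is not yours to choose. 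Second, the sharp maximal function in the $\varphi$-weighted setting has a genuinely two-regime definition (oscillation for $r<1$, plain average with $\varphi(Q)^{\eta}$ normalization for $r\geq1$), and the proof of the pointwise estimate must treat these cases separately — the $r\geq1$ case is where the condition on $\eta$ is actually used. Your sketch elides this case split; it is not a minor point, since without it the pointwise estimate does not hold uniformly over cubes of large sidelength.
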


\begin{theorem}\label{Theorem2.2} 
	Suppose that $m\geq2$, $T$ is the new multilinear square operator with 
	generalized kernel as in Definition \ref{definition1.3}. Let ${\textstyle 
		\sum_{k=1}^{\infty }}  k^m C_k<\infty$, $ 
	\vec{\theta}=(\theta_1,\ldots,\theta_m) $, $ \theta_j\geq 0 $, 
	$ j=1,\ldots,m $, $\vec{p}=(p_1,\ldots,p_m)$, $1/p=1/p_1+\cdots+1/p_m$, 
	$\vec \omega=(\omega_1,\ldots,\omega_m)\in 
	A_{\vec{p}/q'}^\infty(\varphi)$, $v_{\vec\omega}= {\textstyle \prod_{j=1}^{m}} 
	\omega_j^{p/p_j}$, and $\vec{b}\in BMO_{\vec\theta}^m(\varphi)$. If 
	$q'<p_j<\infty$, $ j=1,\ldots ,m $, there exists a positive constant $ C $ such 
	that 
	\begin{align*}
		\| T_{\prod \vec b}(\vec f)\|_{L^{p}(v_{\vec\omega})}\le C\prod_{j=1}^{m} \| 
		b_j \|_{BMO_{\theta_j}(\varphi)}
		\prod_{j=1}^{m}\| f_j \|_{L^{p_j}(\omega_j)}.
		\nonumber
	\end{align*}
\end {theorem}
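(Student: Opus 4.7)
The plan is to reduce the weighted $L^p$-bound to a pointwise sharp maximal function estimate combined with a Fefferman--Stein-type inequality valid for $A_{\vec p}^{\infty}(\varphi)$-weights. More precisely, I will establish that for some exponents $0<\delta<\eta<1/m$ there is a constant $C>0$ such that
\begin{align*}
M_{\delta}^{\#}\bigl(T_{\prod \vec{b}}(\vec{f})\bigr)(x)
&\leq C \prod_{j=1}^{m}\|b_{j}\|_{BMO_{\theta_j}(\varphi)}\,\mathcal{M}_{L(\log L),\vec\theta}(\vec{f})(x)\\
&\quad + C\sum_{j=1}^{m-1}\sum_{\xi\in C_{j}^{m}} \prod_{i\in\xi}\|b_{i}\|_{BMO_{\theta_i}(\varphi)}\, M_{\eta}\bigl(T_{\prod \vec{b}_{\xi'}}(\vec{f})\bigr)(x),
\end{align*}
where $\mathcal{M}_{L(\log L),\vec\theta}$ is an $L(\log L)$-type multi-sublinear maximal operator adapted to $\varphi$. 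The lower-order iterative commutators $T_{\prod\vec{b}_{\xi'}}$ are then absorbed by an induction on $m$, with Theorem \ref{Theorem2.1} providing the one-deficient base step.

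For the pointwise estimate, I fix a cube $Q\ni x$ of sidelength $r$, put $Q^{*}:=8\sqrt{n}\,Q$, and decompose each $f_{j}=f_{j}^{0}+f_{j}^{\infty}$ with $f_{j}^{0}:=f_{j}\chi_{Q^{*}}$. The key algebraic device is the identity
\begin{align*}
\prod_{j=1}^{m}\bigl(b_{j}(x)-b_{j}(y_{j})\bigr) = \sum_{\xi\subseteq\{1,\dots,m\}}(-1)^{m-|\xi|}\prod_{i\in\xi}\bigl(b_{i}(x)-(b_{i})_{Q^{*}}\bigr)\prod_{i\in\xi'}\bigl(b_{i}(y_{i})-(b_{i})_{Q^{*}}\bigr).
\end{align*}
The summand $\xi=\{1,\dots,m\}$ produces $\prod_{j=1}^{m}\bigl(b_{j}(x)-(b_{j})_{Q^{*}}\bigr)\cdot T(\vec{f})(x)$, which after passing to the $L^{\delta}$-oscillation and applying H\"older with a John--Nirenberg inequality for $BMO_{\theta_j}(\varphi)$ contributes the main term; the other summands correspond to iterative commutators of strictly lower order and feed the induction. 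The $2^{m}$ tensor products obtained by expanding $\prod_j f_j = \prod_j (f_j^0+f_j^\infty)$ must then be estimated separately: for the purely local piece one invokes the base $L^{s_1}\times\cdots\times L^{s_m}\to L^{s,\infty}$ boundedness (iii) of Definition \ref{definition1.3}, while for each mixed or purely nonlocal piece one decomposes $(Q^{*})^{c}$ into dyadic annuli $2^{k+1}Q^{*}\setminus 2^{k}Q^{*}$ and applies the smoothness condition (ii) with the H\"older exponents $q,q'$.

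Summing over these annuli produces a factor $C_{k}\,2^{-kmn/q'}$ from the kernel, while each $BMO_{\theta_j}(\varphi)$-factor $(b_j(y_j)-(b_j)_{Q^{*}})$ integrated over $2^{k+1}Q^{*}$ contributes $\lesssim k\,\|b_{j}\|_{BMO_{\theta_j}(\varphi)}\,\varphi(2^{k}Q)^{\theta_j}$ by telescoping through the intermediate averages. With $m$ such BMO factors arising from the iterative product structure, the total series becomes $\sum_{k}k^{m}C_{k}$, which is exactly the hypothesis $\sum_{k}k^{m}C_{k}<\infty$. After this pointwise bound is in hand, the theorem follows by applying the Fefferman--Stein sharp maximal inequality on $L^{p}(v_{\vec\omega})$ with $v_{\vec\omega}\in A_\infty^\infty(\varphi)$, the weighted boundedness of $\mathcal{M}_{L(\log L),\vec\theta}$ from $L^{p_1}(\omega_1)\times\cdots\times L^{p_m}(\omega_m)$ to $L^{p}(v_{\vec\omega})$, and the inductive hypothesis on $T_{\prod \vec{b}_{\xi'}}$.

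The main obstacle will be tracking, throughout the recursion over $|\xi|$ and the dyadic sum over $k$, how the distinct $\theta_j$-parameters of the spaces $BMO_{\theta_j}(\varphi)$ combine with the $\theta$-parameter of the ambient $A_{\vec p/q'}^{\infty}(\varphi)$-weight. Concretely, one must check that the accumulated factor $\prod_j \varphi(2^kQ)^{\theta_j}$ arising from the nonlocal estimates can be absorbed into a sufficiently large $\theta$ in the weight class, using $A_{\vec p/q'}^{\infty}(\varphi)=\bigcup_{\theta\ge 0}A_{\vec p/q'}^{\theta}(\varphi)$, and that the auxiliary $L(\log L)$-maximal operator indeed enjoys the required $A_{\vec p/q'}^{\infty}(\varphi)$ weighted bounds. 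Granted the corresponding sharp-maximal and maximal-operator lemmas of Section \ref{sec3}, the induction closes and yields the stated estimate.
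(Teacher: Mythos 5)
Your overall strategy — a pointwise sharp-maximal inequality reducing $T_{\prod\vec b}$ to the base operator $T$ plus lower-order iterative commutators $T_{\prod\vec b_{\xi'}}$, followed by a Fefferman--Stein argument and a recursion over $|\xi'|$ — coincides with the paper's route (Lemma \ref{Lemma3.12} followed by the proof of Theorem \ref{Theorem2.2}). The subset decomposition via $\xi\subseteq\{1,\dots,m\}$ and the dyadic-annulus estimate producing the $\sum_k k^m C_k$ factor from $m$-fold $BMO_{\theta_j}(\varphi)$ telescoping are both correct in spirit and match the paper.

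There are, however, three concrete gaps. First, you control the main term by $\mathcal{M}_{L(\log L),\vec\theta}(\vec f)$, but no weighted bound for an $L(\log L)$-type multilinear maximal operator in the $A_{\vec p/q'}^\infty(\varphi)$ setting is established anywhere in the paper, and you do not sketch one. The paper avoids this entirely: since $q'<p_j$ and Lemma \ref{Lemma3.4} upgrades $\vec\omega\in A_{\vec p/q'}^\infty(\varphi)$ to $\vec\omega\in A_{\vec p/(q'k)}^\infty(\varphi)$ for some $k>1$, one simply takes $l:=q'k\in(q',\min_j p_j)$ and uses the power-mean maximal operator $\mathcal{M}_{l,\varphi,\eta}$, whose weighted bound is exactly Lemma \ref{Lemma3.5}. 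Moreover, because the sharp-maximal estimate for $T$ itself already requires $\mathcal{M}_{q',\varphi,\eta}$ (Lemma \ref{Lemma3.10}), the natural Orlicz quantity would be $L^{q'}(\log L)^m$, not $L(\log L)$, so $\mathcal{M}_{L(\log L),\vec\theta}$ is not even the right object. Second, your sketch says nothing about the two regimes built into the $\varphi$-adapted sharp maximal function $M^{\sharp,\triangle}_{\delta,\varphi,\eta}$: for cubes with $r\geq 1$ there is no oscillation against a constant and one must estimate $|T_{\prod\vec b}(\vec f)|$ directly from the size condition while absorbing the extra $\varphi(Q)^{-\eta/\delta}$; this case is missing from your plan and is precisely where the lower bound on $\eta$ in terms of $\sum_j\theta_j$ is forced. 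Third, you say Theorem \ref{Theorem2.1} provides the base of the induction, but the recursion lives entirely at the level of pointwise sharp-maximal estimates, so the base must be the pointwise lemma for a one-variable commutator (Lemma \ref{Lemma3.11}), not the $L^p$ theorem; one also needs the carefully chosen exponent chain $\delta<\beta_1<\cdots<\beta_m<1/m$ so that each step's $\varepsilon$ is the next step's $\delta$. With those three corrections your proposal would align with the paper's argument.
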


Theorem \ref{Theorem2.3} and Theorem \ref{Theorem2.4} establish the 
boundedness of multilinear commutators and multilinear iterative commutators on 
the new Morrey spaces, respectively.

\begin{theorem}\label{Theorem2.3} 
Suppose that $m\geq2$, $T$ is the new multilinear square operator with 
generalized kernel as in Definition \ref{definition1.3}. Let ${\textstyle 
\sum_{k=1}^{\infty }}  k C_k<\infty$, $ 
\vec{\theta}=(\theta_1,\ldots,\theta_m)$, $ \theta_j> 0 $, 
$ j=1,\ldots,m $,  $\alpha \in (-\infty  ,\infty ) $, $ \lambda \in [0,1) $, 
$1\le p< l\le \infty$, $ \vec 
p=(p_1,\ldots,p_m) $, $1/p=1/p_1+\ldots+1/p_m$, $ \vec w=(w_1,\ldots ,w_m)\in 
A_{\vec p /q'}^{\theta }(\varphi ) $, $v_{\vec w}= {\textstyle 
\prod_{j=1}^{m}}  w_{j}^{p/p_j}$, $ \beta =\alpha +mp\lambda \theta/q' $, $ 
\theta > 0 $, and $ \vec{b}\in BMO_{\vec\theta}^m(\varphi)  $. If $q'<p_j<\infty$, $ 
j=1,\ldots ,m $, there exists a positive constant C such that
\begin{align*}
\|T_{\sum \vec b}(\vec{f})\|_{M_{\alpha, \lambda}^{p, 
l}\left(v_{\vec{w}}\right)} \leq C 
\sum_{j=1}^{m} \left \| b_j \right \|_{BMO_{\theta _j}(\varphi )}  
||\vec f||_{M_{\beta , \lambda }^{\vec p, l}(v_{\vec w},\vec w)}.
\end{align*}
\end {theorem}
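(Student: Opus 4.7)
The plan is to argue cube by cube. Fix $Q = Q(z,r)$ and write $f_j = f_j^0 + f_j^\infty$ with $f_j^0 := f_j \chi_{2Q}$. Multilinearity gives
$T_{b_j}^j(\vec f)(x) = \sum_{\vec\alpha \in \{0,\infty\}^m} T_{b_j}^j(f_1^{\alpha_1},\ldots,f_m^{\alpha_m})(x)$, producing $m\cdot 2^m$ pieces after summing over $j$. Applying $\varphi(Q)^\alpha v_{\vec w}(Q)^{-\lambda}\|\cdot\chi_Q\|_{L^p(v_{\vec w})}$ to each piece and taking the $L^l(dz)$ norm in $z$ reduces the theorem to bounding each piece by $\sum_j \|b_j\|_{BMO_{\theta_j}(\varphi)}$ times a Morrey density in $\vec f$ at a comparable scale.

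For the fully local piece $\vec\alpha = (0,\ldots,0)$, I would invoke Theorem \ref{Theorem2.1} to get
$\|T_{\sum\vec b}(\vec f^{\vec 0})\chi_Q\|_{L^p(v_{\vec w})} \leq C\sum_j\|b_j\|_{BMO_{\theta_j}(\varphi)}\prod_j\|f_j\chi_{2Q}\|_{L^{p_j}(w_j)}$. The $A_{\vec p/q'}^{\theta}(\varphi)$ condition translates $v_{\vec w}(Q)^{-\lambda}$ into an expression involving $\varphi(2Q)^{-mp\lambda\theta/q'}\prod_j w_j(2Q)^{-p\lambda/p_j}$, so combined with $\varphi(Q)^{\alpha}$ and the defining relation $\beta = \alpha + mp\lambda\theta/q'$ the bound collapses exactly to the Morrey density $\varphi(2Q)^{\beta}v_{\vec w}(2Q)^{-\lambda}\prod_j\|f_j\chi_{2Q}\|_{L^{p_j}(w_j)}$. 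Taking $L^l(dz)$ and using the doubling properties of $\varphi$ and of $v_{\vec w}$ then bounds this contribution by $C\|\vec f\|_{M_{\beta,\lambda}^{\vec p,l}(v_{\vec w},\vec w)}$.

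For every piece with at least one $\alpha_i = \infty$, I would use the commutator identity $T_{b_j}^j(\vec g)(x) = (b_j(x)-(b_j)_{2Q})T(\vec g)(x) - T(g_1,\ldots,(b_j-(b_j)_{2Q})g_j,\ldots,g_m)(x)$ to separate the BMO factor, reducing matters to bounds on $T$ itself with a modified $j$-th entry. Decompose $\mathbb{R}^n\setminus 2Q = \bigcup_{k\ge 1} A_k$ with $A_k = 2^{k+1}Q\setminus 2^k Q$. Picking a base point $x_0\in Q$, write $K_t(x,\vec y) = [K_t(x,\vec y)-K_t(x_0,\vec y)] + K_t(x_0,\vec y)$; the difference is governed by the smoothness condition \eqref{1.5}, and Hölder's inequality in $\vec y$ at exponent $q$ pairs it against $\prod_j\|f_j\chi_{2^{k+1}Q}\|_{L^{q}}$, producing a factor $C_k$. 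The BMO factor $b_j-(b_j)_{2Q}$ evaluated on $2^{k+1}Q$ then contributes $k\|b_j\|_{BMO_{\theta_j}(\varphi)}$ via the standard estimate $|(b_j)_{2^{k+1}Q}-(b_j)_{2Q}| \lesssim k\varphi(2^{k+1}Q)^{\theta_j}\|b_j\|_{BMO_{\theta_j}(\varphi)}$, while a Kolmogorov-type reduction exchanges the $L^q$ norms for weighted $L^{p_j}$ norms using the $A_{\vec p/q'}^{\theta}(\varphi)$ hypothesis.

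Assembling these estimates, I would take the $L^l(dz)$ norm, move it inside the $k$-sum by Minkowski's inequality (valid since $l\ge 1$), rescale each summand to the cube $2^{k+1}Q$, and bound it by $\|\vec f\|_{M_{\beta,\lambda}^{\vec p,l}(v_{\vec w},\vec w)}$ multiplied by a geometric factor of the form $\varphi(2^kQ)^{\alpha-\beta}v_{\vec w}(Q)^{-\lambda}v_{\vec w}(2^kQ)^{\lambda}$ times $2^{-kN'}$ arising from the size/smoothness decay. The main obstacle is precisely this bookkeeping: one must verify that after combining the $A_{\vec p/q'}^{\theta}(\varphi)$ weight ratios, the exponent shift $\beta-\alpha = mp\lambda\theta/q'$, and the geometric decay from conditions \eqref{1.4}--\eqref{1.5}, the residual factor in $k$ is absolutely summable against $k C_k$. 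The hypothesis $\sum_k k C_k < \infty$ then closes the estimate, and taking the supremum over $r$ completes the proof.
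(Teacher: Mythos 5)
Your overall architecture — fix a cube, split $f_j=f_j^0+f_j^\infty$, quote Theorem \ref{Theorem2.1} for the fully local piece, then estimate the tail pieces on dyadic annuli and recombine into the Morrey norm — matches the paper's. There are, however, two substantive gaps in how you handle the tail.

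First, the mechanism you cite for summability over $k$ is wrong. You conclude that "the hypothesis $\sum_k kC_k<\infty$ then closes the estimate," but in the paper's proof the tail pieces with some $\alpha_i=\infty$ are estimated using \emph{only} the size condition \eqref{1.4}, never \eqref{1.5}, so no factor $C_k$ ever appears there. The decay that makes the $k$-sum converge comes from Lemma \ref{Lemma3.7}: the ratio $v_{\vec w}(Q)^{1/p}/v_{\vec w}(3^{k+1}Q)^{1/p}\lesssim 3^{-nk\delta/p}\varphi(3^{k+1}Q)^{\eta/p}$ (the $v_{\vec w}(Q)^{1/p}$ entering from the $L^p(v_{\vec w},Q)$ norm of the $x$-dependent BMO factor via Lemma \ref{Lemma3.9}, and $v_{\vec w}(3^{k+1}Q)^{-1/p}$ entering from the $A_{\vec p/q'}^\theta(\varphi)$ condition applied at scale $3^{k+1}Q$), together with taking $N$ large so that the assorted positive powers of $\varphi(3^{k+1}Q)$ are absorbed. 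Note that for $r<1$, $\varphi(3^{k+1}Q)^{-N}\sim 1$ and provides no decay at all; the geometric factor $3^{-nk\delta/p}$ is essential and your sketch does not produce it. Your claimed "$2^{-kN'}$ arising from the size/smoothness decay" is not available in the small-$r$ regime, so without Lemma \ref{Lemma3.7} the $k$-sum does not close.

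Second, your kernel split $K_t(x,\vec y)=[K_t(x,\vec y)-K_t(x_0,\vec y)]+K_t(x_0,\vec y)$ introduces a leftover constant-in-$x$ term that you never estimate. Bounding $T(\vec f^{\vec\alpha})(x_0)$ for tail $\vec\alpha$ requires the size condition anyway, which makes the smoothness-difference term an unnecessary complication; the paper simply applies Minkowski's inequality in $t$, then the size condition \eqref{1.4}, then H\"older and the $A_{\vec p/q'}^\theta(\varphi)$ definition (not a "Kolmogorov-type reduction" — Lemma \ref{Lemma3.2} is used elsewhere, in the sharp maximal estimates, not here) to pass directly from $L^{q'}$ averages on $3^{k+1}Q$ to the weighted Morrey density. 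Relatedly, you do not use Lemma \ref{Lemma3.9} to handle $\bigl(\int_Q|b_j-b_{j,Q}|^pv_{\vec w}\bigr)^{1/p}$, which is the step that generates the $v_{\vec w}(Q)^{1/p}$ and then feeds into the Lemma \ref{Lemma3.7} decay described above. In short: your local piece and the $j$-telescoping of $b_j$-averages are fine, but the tail bookkeeping as described does not assemble into a convergent estimate, and the role you assign to $\sum_k kC_k<\infty$ is incorrect — that hypothesis is inherited solely through the invocation of Theorem \ref{Theorem2.1} on the local piece.
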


\begin{theorem}\label{Theorem2.4} 
Suppose that $m\geq2$, $T$ is the new multilinear square operator with 
generalized kernel as in Definition \ref{definition1.3}. Let ${\textstyle 
\sum_{k=1}^{\infty }}  k^m C_k<\infty$, $ 
\vec{\theta}=(\theta_1,\ldots,\theta_m)$, $ \theta_j > 0 $, 
$ j=1,\ldots,m $, $\alpha \in (-\infty  ,\infty )  $, $ \lambda \in [0,1) $, 
$1\le p< l\le \infty$, $ \vec p=(p_1,\ldots,p_m) 
$, $1/p=1/p_1+\ldots+1/p_m$, $ \vec w=(w_1,\ldots ,w_m)\in A_{\vec p 
/q'}^{\theta }(\varphi ) $, $v_{\vec w}= {\textstyle \prod_{j=1}^{m}}  
w_{j}^{p/p_j}$, $ \beta =\alpha +mp\lambda \theta/q' $, $ \theta > 0 $, and $ 
\vec{b}\in BMO_{\vec\theta}^m(\varphi) $. If $q'<p_j<\infty$, $ j=1,\ldots ,m 
$, there exists a positive constant C such that
\begin{align*}
\|T_{\prod \vec b}(\vec{f})\|_{M_{\alpha, \lambda}^{p, 
l}\left(v_{\vec{w}}\right)} \leq C 
\prod_{j=1}^{m} \left \| b_j \right \|_{BMO_{\theta _j}(\varphi )}  
||\vec f||_{M_{\beta , \lambda }^{\vec p, l}(v_{\vec w},\vec w)}.
\end{align*}
\end {theorem}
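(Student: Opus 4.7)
The plan is to parallel the proof of Theorem \ref{Theorem2.3} but replace the linear commutator telescoping with the iterative-commutator expansion encoded in Definition \ref{definition1.6}. Fix a cube $Q = Q(z,r)$ and an enlargement $Q^* := 2\sqrt{mn}\,Q$. For each $j$, split $f_j = f_j^0 + f_j^\infty$ with $f_j^0 := f_j\chi_{Q^*}$. Expanding the $m$-fold product gives $2^m$ tuples $\vec\alpha \in \{0,\infty\}^m$; separate the purely local piece $\vec\alpha = \vec 0$ from the global pieces, for which at least one coordinate equals $\infty$. By Minkowski's inequality inside the square-function integral, the contributions add.

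For the local piece I would apply Theorem \ref{Theorem2.2} directly to bound $\|T_{\prod\vec b}(\vec f^{\,0})\chi_Q\|_{L^p(v_{\vec w})}$ by $C\prod_j\|b_j\|_{BMO_{\theta_j}(\varphi)}\prod_j\|f_j^0\|_{L^{p_j}(w_j)}$, and then insert $\varphi(Q)^\alpha v_{\vec w}(Q)^{-\lambda}$ to convert to the target Morrey quantity. Passing from $v_{\vec w}(Q)^{-\lambda}$ to $\prod_j w_j(Q)^{-\lambda p/p_j}$ is done via $\vec w\in A^{\theta}_{\vec p/q'}(\varphi)$ and pays the factor $\varphi(Q)^{mp\lambda\theta/q'}$, which is exactly the shift $\beta - \alpha$.

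For each global piece I would derive a pointwise estimate on $x\in Q$. Writing $b_j(x) - b_j(y_j) = (b_j(x) - b_{j,Q^*}) + (b_{j,Q^*} - b_j(y_j))$, the full product expands, in the notation of Definition \ref{definition1.6}, as $\sum_{j=0}^{m}\sum_{\xi\in C_j^m}\prod_{i\in\xi}(b_i(x) - b_{i,Q^*})\prod_{k\in\xi'}(b_{k,Q^*}-b_k(y_k))$. The factors indexed by $\xi$ pull outside the $T$-integral; the remaining square-function integral is decomposed dyadically on $2^{s+1}Q^*\setminus 2^sQ^*$, where, depending on whether the ``far'' coordinate comes from $f_j^\infty$ or from a smoothness-exploiting cancellation, one uses the size bound \eqref{1.4} or the smoothness bound \eqref{1.5} with H\"older exponent $q'$ in $\vec y$. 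On the $s$-th annulus, $|b_{k,Q^*}-b_k(y_k)|$ is controlled by the standard estimate for $BMO_{\theta_k}(\varphi)$, namely $C(s+1)\|b_k\|_{BMO_{\theta_k}(\varphi)}\varphi(2^sQ)^{\theta_k}$. The resulting pointwise bound is a sum over $\xi$ and $s$ of $\prod_{i\in\xi}|b_i(x)-b_{i,Q^*}|$ times dyadic averages of $|f_j|^{q'}$ on $2^sQ^*$, times $\prod_{k\in\xi'}(s+1)\|b_k\|_{BMO_{\theta_k}(\varphi)}$ and the corresponding $\varphi(2^sQ)^{\theta_k}$ factors.

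Taking $L^p(v_{\vec w})$ over $Q$, I would use H\"older and the generalized John--Nirenberg inequality for $BMO_{\theta_i}(\varphi)$ to absorb $\prod_{i\in\xi}|b_i(x)-b_{i,Q^*}|$ into the product of BMO norms, then use $\vec w\in A^{\theta}_{\vec p/q'}(\varphi)$ to convert dyadic averages of $|f_j|^{q'}$ on $2^sQ^*$ back to $\|f_j\chi_{2^sQ^*}\|_{L^{p_j}(w_j)}$. Taking the $l$-power integral in $z$ and inserting the Morrey norm definition with index $\beta$ on the right, the sum over $s$ must be absorbed: this is precisely where the hypothesis $\sum_s s^m C_s < \infty$ enters, since the worst case $\xi = \emptyset$ produces $m$ factors of $(s+1)$ from the expansion together with the kernel coefficient $C_s$ from \eqref{1.5}. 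The main obstacle, and the reason for the strengthened hypothesis relative to Theorem \ref{Theorem2.3}, is this simultaneous bookkeeping: tracking all $\varphi(2^sQ)^{\theta_j}$ factors across every $\xi\in\bigcup_j C_j^m$, checking that they cancel against the $\varphi(Q)^\theta$ gains from the weight condition and the shift to $\beta$, and ensuring that the total dyadic sum is absolutely convergent.
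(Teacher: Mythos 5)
Your overall architecture matches the paper's: fix a cube $Q$, split $f_j = f_j^0 + f_j^\infty$, bound the purely local term via Theorem~\ref{Theorem2.2}, expand $b_j(x)-b_j(y_j)$ around a constant, and estimate the remaining global pieces by a dyadic decomposition into annuli before inserting the weight and Morrey machinery. The expansion you describe via $\sum_{j=0}^m\sum_{\xi\in C_j^m}$ is exactly the four-term splitting $\mathrm{I}_{21},\dots,\mathrm{I}_{24}$ the paper carries out for $m=2$.

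However, there is a genuine misconception in how you explain the role of the generalized-kernel smoothness condition~\eqref{1.5} and the hypothesis $\sum_k k^m C_k < \infty$. You assert that the global (annular) pieces are estimated ``depending on whether the far coordinate comes from $f_j^\infty$ or from a smoothness-exploiting cancellation, one uses the size bound~\eqref{1.4} or the smoothness bound~\eqref{1.5},'' and that the convergence of the dyadic sum is secured by $\sum_s s^m C_s<\infty$ because ``the worst case $\xi=\emptyset$ produces $m$ factors of $(s+1)$ from the expansion together with the kernel coefficient $C_s$ from~\eqref{1.5}.'' That is not what happens. In the paper's proof of Theorem~\ref{Theorem2.4} (and likewise Theorem~\ref{Theorem2.3}), the global estimates use \emph{only} the size condition~\eqref{1.4}; the smoothness condition~\eqref{1.5} and the constants $C_k$ never appear in the annular sums. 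The decay that makes the dyadic series converge comes from two independent sources: (i) the free parameter $N$ in the size bound, which one takes large enough to dominate all the $\varphi(3^{k+1}Q)^{\theta}$, $\varphi(3^{k+1}Q)^{\eta/p}$ and $\varphi(3^{k+1}Q)^{\tilde{s}}$ losses, and (ii) the geometric factor $3^{-nk\delta/p}$ produced by Lemma~\ref{Lemma3.7} when one compares $v_{\vec w}(Q)$ with $v_{\vec w}(3^{k+1}Q)$. The polynomial growth $k^2$ (or $k^m$ in general) from the $BMO_{\theta}(\varphi)$ estimates is then swallowed by $3^{-nk\delta/p}$, not by $C_k$. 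The summability hypothesis $\sum_k k^m C_k<\infty$ enters \emph{only} through the invocation of Theorem~\ref{Theorem2.2} for the local piece $T_{\prod\vec b}(\vec f^{\,0})$, since Theorem~\ref{Theorem2.2} itself is built on the sharp maximal pointwise estimate of Lemma~\ref{Lemma3.12}, where the smoothness condition and $C_k$ are genuinely needed. So your ``main obstacle'' of reconciling $\varphi$-factors against $C_s$ across all $\xi$ is not actually an obstacle, and the correct mechanism (large $N$, Lemma~\ref{Lemma3.7}) is not mentioned in your proposal. This should be fixed before the argument can be considered complete.
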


\section{ Some Preliminaries and Notations }\label{sec3}
\begin{definition}
Let $0<\eta<\infty$, $ Q $ be a dyadic cube, and $ f $ be a locally integral 
function. The dyadic maximal function $M_{\varphi,\eta}^\triangle$ and the 
dyadic sharp maximal operator $M_{\varphi,\eta}^{\sharp,\triangle}$ are defined 
by 
\begin{align*}
M_{\varphi,\eta}^\triangle f(x):=\mathop{\sup}\limits_{  {
		Q}\ni x}\frac{1}{\varphi(Q)^{\eta}|Q|}\int_Q|f(y)|dy,
\end{align*}
and
\begin{align*}
M_{\varphi,\eta}^{\sharp,\triangle} f(x)
&:=\mathop{\sup}\limits_{{ 
		Q}\ni x,r<1}\frac{1}{|Q|}\int_{Q}|f(y)-f_Q|dy+\mathop{\sup}\limits_{{
		Q}\ni x,r\geq1}\frac{1}{\varphi(Q)^{\eta}|Q|}\int_{Q}|f(y)|dy\\
&\simeq\mathop{\sup}\limits_{ { 
		Q}\ni x,r<1}\mathop{\inf}\limits_{ 
	c}\frac{1}{|Q|}\int_{Q}|f(y)-c|dy+\mathop{\sup}\limits_{{
		Q}\ni x,r\geq1}\frac{1}{\varphi(Q)^{\eta}|Q|}\int_{Q}|f(y)|dy.
\end{align*}
For $0<\delta < \infty$, denote
\begin{align*}
M^\triangle_{\delta,\varphi,\eta}f(x):=[M^\triangle_{\varphi,\eta}
(|f|^\delta)]^{\frac1\delta}(x), 
M^{\sharp, \triangle}_{\delta,\varphi,\eta}f(x):=[M^{\sharp, 
\triangle}_{\varphi,\eta}(|f|^\delta)]^{\frac1\delta}(x).
\end{align*}
\end{definition}

\begin{definition}
Given $0<\eta<\infty$, $0<\delta < \infty$, $\vec{f}=(f_1,\ldots, f_m)$, the multilinear maximal
operators $\mathcal{M}_{\varphi,\eta}$ and $ 
\mathcal{M}_{\delta,\varphi,\eta} $ are defined by
\begin{align*}
\mathcal{M}_{\varphi,\eta}(\vec{f})(x):=\mathop{\sup}\limits_{ {
		Q}\ni x}\prod_{j=1}^m\frac1{\varphi(Q)^\eta \left | Q \right | } \int_Q 
|f_j(y_j)| dy_j,
\end{align*}
and
\begin{align*}
\mathcal{M}_{\delta,\varphi,\eta}(\vec f)(x)
:=\left [ \mathcal{M}_{\varphi,\eta}(|\vec{f}|^{\delta })(x) \right ] 
^{\frac{1}{\delta } }
=\left(\mathop{\sup}\limits_{ 
	{ Q}\ni x}\prod_{j=1}^m\frac1{\varphi(Q)^\eta \left | Q \right |} \int_Q 
|f_j(y_j)|^\delta dy_j \right)^{\frac1\delta}.
\end{align*}
\end{definition}
\begin{lemma}\label{Lemma3.1}{\rm(See \cite{b2}, Proposition 2.5)}  
Let $\theta\geq0$ and $a\geq1$. If $b\in BMO_{\theta}(\varphi)$, then for cubes 
$Q:=Q(x,r)$,
\begin{itemize}
\item [\rm (i)] $\left(\frac1{|Q|} \displaystyle \int_Q|b(y)-b_Q|^a dy 
\right)^{\frac1a}\leq C\Vert b 
\Vert_{BMO_{\theta}(\varphi)}{\varphi}(Q)^{\theta}$.
\item [\rm (ii)] $\left(\frac1{|{t_0}^k Q|} \displaystyle 
\int_{{t_0}^kQ}|b(y)-b_Q|^a 
dy \right)^{\frac1a}\leq Ck\Vert b 
\Vert_{BMO_{\theta}(\varphi)}{\varphi}({t_0}^kQ)^{\theta}$, for all $k\in 
\mathbb{N}$ and $ t_0=2,3 $.
\end{itemize}
\end {lemma}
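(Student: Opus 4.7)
The plan is to reduce both parts to the classical John--Nirenberg inequality by exploiting the monotonicity $\varphi(Q')\le\varphi(Q)$ whenever $Q'\subset Q$ (since $\varphi(Q)=1+r$ decreases as the sidelength shrinks). This geometric observation is what makes the $BMO_{\theta}(\varphi)$ norm compatible with restriction to a fixed cube.

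For part (i), I would first observe that for every subcube $Q'\subset Q$ the defining inequality of $BMO_{\theta}(\varphi)$ gives
$$\frac{1}{|Q'|}\int_{Q'}|b(y)-b_{Q'}|dy \le \|b\|_{BMO_{\theta}(\varphi)}\varphi(Q')^{\theta} \le \|b\|_{BMO_{\theta}(\varphi)}\varphi(Q)^{\theta},$$
using $\theta\ge0$ and the monotonicity noted above. Hence the restriction $b|_Q$ lies in the classical $BMO(Q)$ with norm at most $\|b\|_{BMO_{\theta}(\varphi)}\varphi(Q)^{\theta}$. Applying the classical John--Nirenberg $L^a$ inequality on $Q$ then yields (i) with a constant depending only on $a$ and $n$.

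For part (ii), I would combine (i) with a triangle inequality followed by a telescoping estimate. Splitting
$$\left(\frac{1}{|t_0^k Q|}\int_{t_0^k Q}|b(y)-b_Q|^a dy\right)^{1/a} \le \left(\frac{1}{|t_0^k Q|}\int_{t_0^k Q}|b(y)-b_{t_0^k Q}|^a dy\right)^{1/a} + |b_{t_0^k Q}-b_Q|,$$
the first summand is bounded by $C\|b\|_{BMO_{\theta}(\varphi)}\varphi(t_0^k Q)^{\theta}$ via (i) applied to the cube $t_0^k Q$. For the second summand, I would write the telescope $b_{t_0^k Q}-b_Q=\sum_{i=0}^{k-1}(b_{t_0^{i+1}Q}-b_{t_0^i Q})$ and use the standard estimate
$$|b_{t_0^{i+1}Q}-b_{t_0^i Q}| \le \frac{t_0^n}{|t_0^{i+1}Q|}\int_{t_0^{i+1}Q}|b(y)-b_{t_0^{i+1}Q}|dy \le t_0^n\|b\|_{BMO_{\theta}(\varphi)}\varphi(t_0^{i+1}Q)^{\theta},$$
together with the monotonicity $\varphi(t_0^{i+1}Q)\le\varphi(t_0^k Q)$ for $i\le k-1$. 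Summing over $i$ then produces the extra linear factor $k$ appearing in (ii).

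The main obstacle is bookkeeping rather than analysis: one must make sure that the comparability factor $t_0^n$ between $|t_0^{i+1}Q|$ and $|t_0^i Q|$ is absorbed into the constant and that the classical John--Nirenberg inequality is invoked with a universal constant depending only on $a$ and $n$. No tool beyond the classical John--Nirenberg theorem is needed, since the monotonicity of $\varphi$ on nested cubes already carries the $\theta$-weighted factor cleanly through the argument.
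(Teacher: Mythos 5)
Your proof is correct. The paper does not reproduce a proof (it simply cites Bui, Proposition 2.5), but your argument — exploiting the monotonicity of $\varphi$ under inclusion of cubes to place $b|_Q$ in classical $BMO(Q)$ with the weighted norm bound and then invoking John--Nirenberg for (i), followed by a telescoping sum over the intermediate concentric cubes to produce the linear factor $k$ in (ii) — is the standard route and is presumably the one Bui uses.
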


\begin{lemma}\label{Lemma3.2}{\rm(See    \cite{b12}  (2.16))} 
Let $0<p<u<\infty$ and $ f $ be a measurable function. There is a 
positive constant $C:=C_{p,u}$ such that the following statement holds,
\begin{equation}
|Q|^{-1/p}\Vert f \Vert_{L^p (Q)}\leq C|Q|^{-1/u}\Vert f 
\Vert_{L^{u,\infty}(Q)}.
\nonumber
\end{equation}
\end {lemma}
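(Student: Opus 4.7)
The plan is to prove this inequality via the layer-cake (distribution function) representation of the $L^p$ norm, combined with the weak-$L^u$ bound. Writing $\|f\chi_Q\|_{L^p}^p = p\int_0^\infty \lambda^{p-1}\,|\{x\in Q : |f(x)|>\lambda\}|\,d\lambda$, I would split the $\lambda$-integral at a carefully chosen threshold $\lambda_0>0$, using two complementary estimates for the distribution set on the two pieces.

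On the low part $\lambda\in(0,\lambda_0]$, use the trivial containment $\{x\in Q:|f(x)|>\lambda\}\subset Q$, contributing $|Q|\lambda_0^p$. On the high part $\lambda\in(\lambda_0,\infty)$, apply the weak-type bound $|\{x\in Q:|f(x)|>\lambda\}|\le \lambda^{-u}\|f\|_{L^{u,\infty}(Q)}^u$, contributing $\tfrac{p}{u-p}\lambda_0^{p-u}\|f\|_{L^{u,\infty}(Q)}^u$. Note the convergence of this tail integral uses precisely $p<u$.

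Then optimize (or simply pick) $\lambda_0:=|Q|^{-1/u}\|f\|_{L^{u,\infty}(Q)}$, which equalizes the two contributions. With this choice both terms reduce to a constant multiple of $|Q|^{(u-p)/u}\|f\|_{L^{u,\infty}(Q)}^p$; taking $p$-th roots and redistributing the factors of $|Q|$ gives the claimed inequality with $C_{p,u}=\bigl(\tfrac{u}{u-p}\bigr)^{1/p}$.

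The main obstacle is essentially bookkeeping rather than conceptual: one must track the exponents of $|Q|$ carefully after the optimization so that the $|Q|^{-1/p}$ and $|Q|^{-1/u}$ normalization factors on the two sides line up correctly, and one must confirm that the resulting constant depends only on $p$ and $u$. The hypothesis $p<u$ is not decorative — it is exactly what makes the integral $\int_{\lambda_0}^\infty \lambda^{p-u-1}\,d\lambda$ finite, and the argument would collapse without it.
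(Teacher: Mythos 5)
Your argument is correct, and it is the standard layer-cake proof of the weak-to-strong embedding $L^{u,\infty}(Q)\hookrightarrow L^{p}(Q)$ for $p<u$; the paper simply cites this as equation (2.16) of Lerner et al.~without reproducing a proof, and the cited source proves it in essentially the same way. Your exponent bookkeeping and the constant $C_{p,u}=(u/(u-p))^{1/p}$ both check out.
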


\begin{lemma}\label{Lemma3.3}{\rm(See \cite{b2}, Proposition 2.3)}  
Let $1\leq p_1,\ldots,p_m<\infty$ and  
$\vec{\omega}=(\omega_1,\ldots,\omega_m)$. Then the following statements are 
equivalent.
\begin{itemize}
\item [\rm (i)] $\vec\omega \in A_{\vec{p}}^{\infty}(\varphi).$\\
\item [\rm (ii)] $\omega_j^{1-p'_j} \in A_{mp'_j}^{\infty}(\varphi),$ 
$j=1,\ldots,m $, and $v_{\vec{\omega}} \in A_{mp}^{\infty}(\varphi)$.
\end{itemize}
\end {lemma}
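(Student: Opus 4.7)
The strategy is to mimic the now-classical characterization of multilinear Muckenhoupt weights due to Lerner--Ombrosi--P\'erez--Torres--Trujillo-Gonz\'alez, while carefully tracking the factor $\varphi(Q)^{\theta}$ throughout. Since $A_{\vec p}^{\infty}(\varphi)=\bigcup_{\theta\ge 0}A_{\vec p}^{\theta}(\varphi)$ (and analogously for the single-weight classes) and since, by the analog of Remark 1.1, $A_{p}^{\theta_1}(\varphi)\subset A_{p}^{\theta_2}(\varphi)$ whenever $\theta_1\le\theta_2$, any finite family of $\theta$-parameters may be replaced by their maximum. Thus it suffices to prove the equivalence at the level of $\theta$-classes, working with a single parameter $\theta$ throughout each implication.

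For the direction (i)$\Rightarrow$(ii), fix $\theta\ge 0$ with $\vec\omega\in A_{\vec p}^{\theta}(\varphi)$. The key algebraic identity is
$$v_{\vec\omega}^{1-(mp)'}=\prod_{j=1}^{m}\omega_j^{-p/(p_j(mp-1))}.$$
Applying the generalized H\"older inequality with exponents $s_j:=p_j(mp-1)/(p(p_j-1))$ — a direct check gives $\sum_{j}1/s_j=1$ and $(mp-1)/s_j=p/p'_j$ — one obtains
\begin{align*}
\Bigl(\tfrac{1}{\varphi(Q)^{\theta}|Q|}\int_Q v_{\vec\omega}^{1-(mp)'}\Bigr)^{mp-1}\le \prod_{j=1}^{m}\Bigl(\tfrac{1}{\varphi(Q)^{\theta}|Q|}\int_Q \omega_j^{1-p'_j}\Bigr)^{p/p'_j}.
\end{align*}
Multiplying by $\tfrac{1}{\varphi(Q)^{\theta}|Q|}\int_Q v_{\vec\omega}$ produces precisely the $p$-th power of the $A_{\vec p}^{\theta}(\varphi)$ quantity, proving $v_{\vec\omega}\in A_{mp}^{\theta}(\varphi)$. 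To obtain $\omega_j^{1-p'_j}\in A_{mp'_j}^{\theta}(\varphi)$ for a fixed $j$, I would run a parallel H\"older argument in which the $j$-th factor is isolated and the remaining weights are reassembled against $v_{\vec\omega}$; the exponents are again dictated by demanding that the powers of $\omega_k$ ($k\ne j$) line up with $\omega_k^{1-p'_k}$.

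For the converse (ii)$\Rightarrow$(i), take a common $\theta^{*}$ majorizing the $\theta$-parameters coming from $v_{\vec\omega}\in A_{mp}^{\theta_0}(\varphi)$ and $\omega_j^{1-p'_j}\in A_{mp'_j}^{\theta_j}(\varphi)$. Applying H\"older in the opposite direction to the $A_{\vec p}^{\theta^{*}}(\varphi)$ defining product decouples the mixed average and allows the individual single-weight bounds to be plugged in, yielding a finite supremum; hence $\vec\omega\in A_{\vec p}^{\theta^{*}}(\varphi)\subset A_{\vec p}^{\infty}(\varphi)$. The main obstacle is the bookkeeping of the $\varphi(Q)^{\theta}$ factors: in the classical setting these factors are absent, but here different averaging scales introduce different powers of $\varphi(Q)$ that must be reconciled. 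This is resolved precisely by the nesting of $A_{p}^{\theta}(\varphi)$-classes, which lets us replace every $\theta_j$ by the common $\theta^{*}$ at the cost of a harmless multiplicative constant, so the argument closes uniformly.
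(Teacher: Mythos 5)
The paper does not prove this lemma at all; it is quoted verbatim from Bui \cite{b2}, Proposition~2.3, with no argument supplied. So there is no ``paper's proof'' against which to match your approach, and the comparison must be against what the cited proof (the $\varphi$-weighted analogue of Lerner--Ombrosi--P\'erez--Torres--Trujillo-Gonz\'alez, Theorem~3.6) actually does.

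Your exponent bookkeeping in the first half of (i)$\Rightarrow$(ii) is correct and reproduces the standard argument: with $s_j = p_j(mp-1)/(p(p_j-1))$ one indeed has $\sum_j 1/s_j = 1$, $-p s_j/(p_j(mp-1)) = 1-p'_j$, and $(mp-1)/s_j = p/p'_j$, so multiplying the $(mp-1)$-th power of the H\"older bound by the $v_{\vec\omega}$-average recovers the $p$-th power of the $A^\theta_{\vec p}(\varphi)$ quantity and shows $v_{\vec\omega}\in A^\theta_{mp}(\varphi)$. However, as written this step is only valid when every $p_j>1$: for $p_j=1$ the exponents $s_j$ and $1-p'_j$ are undefined, and the term in the $A_{\vec p}^\theta(\varphi)$ condition is an essential infimum rather than an integral average, so that case needs a separate argument. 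The lemma is stated for $1\le p_j<\infty$, so this is a real omission. The remaining half of (i)$\Rightarrow$(ii) (showing $\omega_j^{1-p_j'}\in A^\infty_{mp_j'}(\varphi)$) is only described in words, not carried out, but I accept that your ``parallel H\"older'' strategy is in principle the right one.

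The serious gap is (ii)$\Rightarrow$(i). ``Applying H\"older in the opposite direction to decouple the mixed average'' does not work: the identity $v_{\vec\omega}^{1/(mp)}\prod_j(\omega_j^{1-p_j'})^{1/(mp_j')}=1$ together with H\"older only yields the \emph{lower} bound
$1\le\bigl(\tfrac{1}{|Q|}\int_Q v_{\vec\omega}\bigr)^{1/(mp)}\prod_j\bigl(\tfrac{1}{|Q|}\int_Q\omega_j^{1-p_j'}\bigr)^{1/(mp_j')}$,
which is the wrong direction. What the LOPTT-type proof actually uses here is the reverse Jensen inequality for $A_\infty$ weights: from $v_{\vec\omega}\in A_{mp}$ and $\omega_j^{1-p_j'}\in A_{mp_j'}$ one gets $\tfrac{1}{|Q|}\int_Q w\le C\exp\bigl(\tfrac{1}{|Q|}\int_Q\log w\bigr)$ for each of these weights, and then the logarithms cancel exactly because $\tfrac{1}{p}\log v_{\vec\omega}+\sum_j\tfrac{1}{p_j'}\log\omega_j^{1-p_j'}=0$. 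In the $\varphi$-weighted setting this reverse Jensen step produces an extra factor $\varphi(Q)^{\theta(q-1)}$, so one does not stay within the same $\theta$-class but lands in $A^{\tilde\theta}_{\vec p}(\varphi)$ for a larger $\tilde\theta$; your remark that the nesting absorbs such losses is the right instinct, but it is doing far more work than ``a harmless multiplicative constant''---it is absorbing a $Q$-dependent factor, and one must verify that the exponent of $\varphi(Q)$ can be compensated by an admissible shift of $\theta$. This is precisely the point that must be checked and is not in your sketch.

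In short: your (i)$\Rightarrow$(ii) computation for $v_{\vec\omega}$ is correct for $p_j>1$, but you omit $p_j=1$, you do not carry out the $\omega_j^{1-p_j'}$ half, and the sketched mechanism for (ii)$\Rightarrow$(i) is wrong in direction --- the correct tool is the reverse Jensen / exponential-log characterization of $A_\infty$, adapted to track the $\varphi(Q)^\theta$ factors.
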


\begin{lemma}\label{Lemma3.4}{\rm(See \cite{b2}, Proposition 2.4 or \rm See 
\cite{b28}, Theorem 5.1)}  
Let $1\leq p_1,\ldots,p_m<\infty$, and $\vec{\omega}=(\omega_1,\ldots,\omega_m) 
\in A_{\vec{p}}^{\infty}(\varphi) $. Then
\begin{itemize}
\item [\rm (i)] $\vec{\omega}\in A_{r\vec{p}}^{\infty}(\varphi)$, $r\geq 1$. \\
\item [\rm (ii)] If $1<p_1,\ldots,p_m<\infty$, then there exists a  $r>1$ such 
that $\vec{\omega}\in A_{\vec{p}/r}^{\infty}(\varphi)$ and $ p_j/r>1 $, $ 
j=1,\dots ,m $.
\end{itemize}
\end {lemma}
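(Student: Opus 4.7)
My approach splits according to the two items. For both, I intend to use Lemma \ref{Lemma3.3} as the main reduction device, which characterizes membership in $A_{\vec{p}}^{\infty}(\varphi)$ in terms of the one-variable new-weight conditions on $\omega_j^{1-p_j'}$ and on $v_{\vec{\omega}}$.

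For part (i), the first observation is that the product weight is invariant under the scaling $\vec{p}\mapsto r\vec{p}$: since $1/(rp)=\sum_j 1/(rp_j)$, one has $(rp)/(rp_j)=p/p_j$, so $v_{\vec{\omega}}=\prod_j\omega_j^{(rp)/(rp_j)}$ is unchanged. The factor $\left(\frac{1}{\varphi(Q)^{\theta}|Q|}\int_Q v_{\vec\omega}\right)^{1/(rp)}$ is the $(1/r)$-th power of the original $1/p$-factor (up to a benign adjustment of $\theta$), which Jensen's inequality controls. For the remaining factors, note that $(rp_j)'\le p_j'$, and $1-(rp_j)' = (1-p_j')\frac{p_j-1}{rp_j-1}$. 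Applying Jensen's inequality with exponent $(p_j-1)/(rp_j-1)\in(0,1]$ bounds $\left(\frac{1}{|Q|}\int_Q\omega_j^{1-(rp_j)'}\right)^{1/(rp_j)'}$ by a power of $\left(\frac{1}{|Q|}\int_Q\omega_j^{1-p_j'}\right)^{1/p_j'}$. Multiplying through and using the $A_{\vec{p}}^{\theta}(\varphi)$ hypothesis yields the $A_{r\vec{p}}^{\theta'}(\varphi)$ bound for some $\theta'\ge\theta$, whence $\vec\omega\in A_{r\vec p}^{\infty}(\varphi)$.

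For part (ii), I would first invoke Lemma \ref{Lemma3.3} to extract the one-variable memberships $\omega_j^{1-p_j'}\in A_{mp_j'}^{\infty}(\varphi)$ for every $j$ and $v_{\vec\omega}\in A_{mp}^{\infty}(\varphi)$. Each of these one-variable classes admits a self-improvement property: if $u\in A_s^{\theta}(\varphi)$ with $1<s<\infty$, then there exist $\varepsilon=\varepsilon(u)>0$ and $\theta'\ge\theta$ such that $u\in A_{s-\varepsilon}^{\theta'}(\varphi)$. Since only finitely many weights are involved, I choose a single $r>1$ sufficiently close to $1$ so that all improved conditions hold simultaneously and $p_j/r>1$ for every $j$ (feasible because each $p_j>1$). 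Finally, applying the converse direction of Lemma \ref{Lemma3.3} reassembles these improvements into $\vec{\omega}\in A_{\vec p/r}^{\infty}(\varphi)$.

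The main obstacle is the one-variable self-improvement step in (ii): I need a reverse-H\"older-type inequality for weights in $A_s^{\theta}(\varphi)$ that carefully tracks the polynomial growth factor $\varphi(Q)^{\theta}$ in the averages. In the classical setting this is the Coifman--Fefferman openness of $A_s$, but the $\varphi$-modification causes the $\theta$ parameter to inflate under the opening; bookkeeping this inflation, and verifying it stays finite so that the improved weight still lies in $A_{s-\varepsilon}^{\infty}(\varphi)$, is the technical heart of the proof. Once that $\varphi$-adapted reverse H\"older is in place, both (i) and (ii) reduce to exponent-chasing combined with Jensen's/H\"older's inequality, essentially mirroring the Lerner--Ombrosi--P\'erez--Torres--Trujillo-Gonz\'alez argument for the classical $A_{\vec p}$ class.
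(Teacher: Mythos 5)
The paper does not prove Lemma~\ref{Lemma3.4}; it imports it verbatim from Bui's Proposition~2.4 and Zhao--Zhou's Theorem~5.1, so there is no internal proof to compare against. Assessing your sketch on its own merits: part~(i) is sound. Indeed $v_{\vec\omega}$ is invariant under $\vec p\mapsto r\vec p$, the first factor is exactly the $1/r$-th power of the original, and the Jensen step with exponent $\frac{p_j-1}{rp_j-1}\in(0,1]$ bounds each remaining factor by $\varphi(Q)^{\theta(1/r-1)}$ times the $1/r$-th power of the original factor; the leftover $\varphi(Q)$-exponents are all nonpositive, so $\vec\omega\in A_{r\vec p}^{\theta}(\varphi)$ for the same $\theta$.

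Part~(ii) has a gap in the reassembly step. The $A_{\vec p/r}^{\theta}(\varphi)$ condition tests the weight $\omega_j^{1-(p_j/r)'}$, and since $(p_j/r)'=\frac{p_j}{p_j-r}$ one has $\omega_j^{1-(p_j/r)'}=\bigl(\omega_j^{1-p_j'}\bigr)^{\tau_j}$ with $\tau_j=\frac{r(p_j-1)}{p_j-r}>1$. So the converse direction of Lemma~\ref{Lemma3.3} requires $u_j^{\tau_j}\in A_{m(p_j/r)'}^{\infty}(\varphi)$, where $u_j:=\omega_j^{1-p_j'}$, not a lowered-index statement about $u_j$ itself. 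A self-improvement of the form ``$u\in A_s^{\theta}(\varphi)\Rightarrow u\in A_{s-\varepsilon}^{\theta'}(\varphi)$'' keeps the weight fixed, while here it is the power of the weight that changes, so that step alone cannot close the argument. What is actually needed is a $\varphi$-adapted reverse H\"{o}lder inequality for $u_j\in A_\infty^\infty(\varphi)$ giving $\frac{1}{\varphi(Q)^{\theta}|Q|}\int_Q u_j^{\tau_j}\lesssim \bigl(\frac{1}{\varphi(Q)^{\theta}|Q|}\int_Q u_j\bigr)^{\tau_j}$ for $\tau_j$ close to $1$. With that in hand the cleanest route bypasses Lemma~\ref{Lemma3.3} entirely: substitute into the $A_{\vec p/r}^{\theta}(\varphi)$ quantity, use the identity $\tau_j\cdot\frac{p_j-r}{p_j}=\frac{r}{p_j'}$, and recover $C$ times the $r$-th power of the $A_{\vec p}^{\theta}(\varphi)$ quantity. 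If you insist on the Lemma~\ref{Lemma3.3} route, you must pair the reverse H\"{o}lder with openness, since the naive power rule gives $u_j^{\tau_j}\in A_{\tau_j(mp_j'-1)+1}^{\infty}(\varphi)$ and $\tau_j(mp_j'-1)+1-m(p_j/r)'=\frac{p_j(r-1)(m-1)}{p_j-r}>0$ for $m\geq 2$, $r>1$; the overshoot must be absorbed by first lowering $mp_j'$. You rightly flag reverse H\"{o}lder as the crux, but the reassembly as framed elides the power change and would not close as written.
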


\begin{lemma}\label{Lemma3.5}{\rm(See \cite{b19}, Theorem 3.3)}  
Let $1<p_j<\infty$, $j=1,\ldots, m$, $1/p=1/p_1+\ldots+1/p_m$, 
and $\vec \omega\in A_{\vec{p}}^\infty(\varphi)$, then there exists some 
$\eta_0>0$ depending on $p, m, p_j$, such that\\
\begin{equation}
\Vert \mathcal{M}_{\varphi,\eta_0}(\vec{f}) \Vert_{L^{p}(v_{\vec \omega })}\leq 
C\prod_{j=1}^{m}\Vert f_j \Vert_{L^{p_j}(\omega_j)}.
\nonumber
\end{equation}
\end {lemma}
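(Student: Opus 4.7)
The plan is to adapt the multilinear maximal-function strategy of Lerner--Ombrosi--P\'erez--Torres--Trujillo-Gonz\'alez to the $\varphi$-local framework. The principal subtlety is that $\vec\omega\in A_{\vec p}^{\infty}(\varphi)$ does not imply $\omega_j\in A_{p_j}^{\infty}(\varphi)$ individually, so the joint condition must be exploited directly rather than factored into single-variable linear bounds.

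First I would invoke Lemma \ref{Lemma3.4}(ii) to obtain some $r>1$ with $\vec\omega\in A_{\vec p/r}^{\infty}(\varphi)$ and each $p_j/r>1$, then fix $\theta\geq 0$ with $\vec\omega\in A_{\vec p/r}^{\theta}(\varphi)$. The exponent $\eta_0$ will be chosen explicitly in terms of $r,m,\theta$, large enough to absorb every $\varphi(Q)^{\theta}$-factor that surfaces in the estimate below.

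Second, I would establish a pointwise estimate of the shape
\begin{align*}
\mathcal{M}_{\varphi,\eta_0}(\vec f)(x)\le C\bigg(\prod_{j=1}^m M^{\varphi}_{\omega_j}(|f_j|^{r})(x)\bigg)^{1/r},
\end{align*}
where $M^{\varphi}_{\omega_j}$ denotes a suitable $\varphi$-localized Hardy--Littlewood maximal operator with respect to the measure $\omega_j\,dy$. To obtain this, I would fix a cube $Q\ni x$, insert $\omega_j^{1/p_j}\cdot \omega_j^{-1/p_j}$ into each factor $\frac{1}{|Q|}\int_Q|f_j|$, apply H\"older with exponents $p_j/r$ and $(p_j/r)'$, multiply across $j=1,\ldots,m$, and use the $A_{\vec p/r}^{\theta}(\varphi)$ condition to control the product of the $\omega_j^{1-(p_j/r)'}$-averages by a uniform multiple of $\varphi(Q)^{m\eta_0}$ divided by the $v_{\vec\omega}$-average on $Q$. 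The choice of $\eta_0$ ensures that the prefactor $\varphi(Q)^{-m\eta_0}$ implicit in the definition of $\mathcal{M}_{\varphi,\eta_0}$ exactly cancels this factor.

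Third, I would raise both sides to the $p$-th power, integrate against $v_{\vec\omega}\,dx$, and apply H\"older in $L^{p}$ with $1/p=\sum 1/p_j$ to split the right side into $m$ factors, each of which is dominated by $\|f_j\|_{L^{p_j}(\omega_j)}$ via the $L^{p_j/r}(\omega_j\,dy)$-boundedness of $M^{\varphi}_{\omega_j}$; this boundedness is available since $p_j/r>1$ and Tang's theory for $A_q^{\infty}(\varphi)$ weights applies to the $\omega_j$-averaged $\varphi$-maximal operator. The main obstacle is Step~2: carefully balancing H\"older exponents so that a single power $\eta_0$ suffices to swallow every $\varphi(Q)^{\theta}$-factor generated by the $A_{\vec p/r}^{\theta}(\varphi)$ condition, and producing a pointwise bound whose right-hand side meshes exponentially with the downstream H\"older in $L^p$. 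Once this bookkeeping is completed, the one-weight $\varphi$-boundedness of each $M^{\varphi}_{\omega_j}$ closes the argument.
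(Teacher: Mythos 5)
This lemma is quoted by the authors from Pan and Tang \cite{b19} (Theorem~3.3) without proof, so there is no in-paper argument to compare against; what follows evaluates your sketch on its own terms.

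The gap is in the pointwise estimate of Step~2. The H\"older computation you describe---split $|f_j|=|f_j|\,\omega_j^{1/p_j}\cdot\omega_j^{-1/p_j}$, apply H\"older with exponents $p_j/r$ and $(p_j/r)'$, multiply over $j$, and invoke $\vec\omega\in A_{\vec p/r}^{\theta}(\varphi)$---produces, modulo $\varphi$-factors,
\begin{align*}
\prod_{j=1}^m\frac{1}{|Q|}\int_Q|f_j|\ \lesssim\ \prod_{j=1}^m\left(\frac{1}{v_{\vec\omega}(Q)}\int_Q|f_j|^{p_j/r}\,\omega_j\,dy\right)^{r/p_j},
\end{align*}
with a \emph{single} $v_{\vec\omega}$-normalization governing every factor, exactly as you acknowledge when you say the joint condition yields the $v_{\vec\omega}$-average on $Q$ in the denominator. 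But this expression does \emph{not} factor as $\prod_j M^{\varphi}_{\omega_j}(|f_j|^r)^{1/r}$: the normalizing measure is $v_{\vec\omega}$, not $\omega_j$, and the power on $|f_j|$ is $p_j/r$, not $r$. There is no route from one to the other, for precisely the reason you flag at the outset: nothing in $\vec\omega\in A^{\infty}_{\vec p}(\varphi)$ lets you compare $\omega_j$-averages to $v_{\vec\omega}$-averages factor by factor. Your bookkeeping is therefore internally inconsistent---the H\"older step you actually carry out lands on $v_{\vec\omega}$, while the pointwise bound you assert pretends it landed on the individual $\omega_j$'s.

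The repair (due to Lerner--Ombrosi--P\'erez--Torres--Trujillo-Gonz\'alez, and presumably what Pan--Tang do in the $\varphi$-setting) is one more H\"older inequality at exactly this juncture: since $\sum_j p/p_j=1$, raise to the power $p/r$ and apply H\"older with exponents $p_j/p$ to merge the $m$ factors into the single average $\frac{1}{v_{\vec\omega}(Q)}\int_Q(f_1\cdots f_m)^{p/r}\,v_{\vec\omega}\,dy$, using the algebraic identity $\prod_j\big(|f_j|^{p_j/r}\,\omega_j/v_{\vec\omega}\big)^{p/p_j}=(f_1\cdots f_m)^{p/r}$. After absorbing the $\varphi$-factors into $\eta_0$, this gives
\begin{align*}
\mathcal{M}_{\varphi,\eta_0}(\vec f)(x)\ \lesssim\ \Big(M_{v_{\vec\omega}}\big((f_1\cdots f_m)^{p/r}\big)(x)\Big)^{r/p},
\end{align*}
with a single centered, $\varphi$-localized maximal operator relative to the measure $v_{\vec\omega}\,dy$. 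The $L^r(v_{\vec\omega})$-boundedness of $M_{v_{\vec\omega}}$ is then automatic by Besicovitch (no $A^{\infty}_q(\varphi)$ hypothesis is needed, so the appeal to Tang's one-weight theory in your Step~3 is superfluous), and a final H\"older with exponents $p_j/p$ against $v_{\vec\omega}=\prod_j\omega_j^{p/p_j}$ finishes. In short, your downstream Step~3 would close the argument if the factored pointwise bound held, but it does not; the $v_{\vec\omega}$-normalization must be absorbed by an additional H\"older into one merged maximal function, not distributed across $m$ separate $\omega_j$-maximal operators.
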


\begin{lemma}\label{Lemma3.6}{\rm(See \cite{b24}, Proposition 2.2)}  
Let $1<p<\infty$, $\omega\in A_\infty^\infty(\varphi)$, $0<\eta<\infty$, and 
$f\in L^p(\omega)$, then\\
\begin{equation}
\Vert f \Vert_{L^p(\omega)}\leq\Vert M_{\varphi,\eta}^\triangle f 
\Vert_{L^p(\omega)}\leq C \Vert M_{\varphi,\eta}^{\sharp, \triangle} f 
\Vert_{L^p(\omega)}.
\nonumber
\end{equation}
\end {lemma}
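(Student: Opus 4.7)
The first inequality $\|f\|_{L^p(\omega)}\le\|M_{\varphi,\eta}^\triangle f\|_{L^p(\omega)}$ follows from the pointwise bound $|f(x)|\le M_{\varphi,\eta}^\triangle f(x)$ almost everywhere. This pointwise bound is a consequence of the Lebesgue differentiation theorem: for a sequence of dyadic cubes $Q\ni x$ shrinking to $x$, one has $r(Q)\to 0$, so $\varphi(Q)^\eta=(1+r(Q))^\eta\to 1$ and $\frac{1}{\varphi(Q)^\eta|Q|}\int_Q|f|\to|f(x)|$ at every Lebesgue point.

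The core of the lemma is the second inequality, which is a Fefferman--Stein type comparison. My plan is to adapt the classical good-$\lambda$ proof to the dyadic $\varphi$-setting. A key preliminary observation is that the structure of $M_{\varphi,\eta}^{\sharp,\triangle}$ already handles large-cube contributions trivially: for any dyadic $Q\ni x$ with $r(Q)\ge 1$, one has $\frac{1}{\varphi(Q)^\eta|Q|}\int_Q|f|\le M_{\varphi,\eta}^{\sharp,\triangle}f(x)$ directly from the definition, so only cubes with $r(Q)<1$ require real work. For such cubes $\varphi(Q)^\eta\le 2^\eta$ is a harmless constant, and $M_{\varphi,\eta}^\triangle$ essentially behaves as the ordinary dyadic maximal function.

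Fix $\lambda>0$ and decompose the open set $\Omega_\lambda=\{M_{\varphi,\eta}^\triangle f>\lambda\}$ into its maximal dyadic Calder\'on--Zygmund cubes $\{Q_j\}$, each being maximal with $\frac{1}{\varphi(Q_j)^\eta|Q_j|}\int_{Q_j}|f|>\lambda$. Inside each $Q_j$ the part of $M_{\varphi,\eta}^\triangle f$ coming from dyadic cubes strictly containing $Q_j$ is bounded by $\lambda$ by the maximality of $Q_j$, so one only needs to control the local piece. When $r(Q_j)<1$, I would run the classical argument: split $f=(f-f_{Q_j})\chi_{Q_j}+\text{rest}$, apply the weak-$(1,1)$ bound for the dyadic maximal operator to the first summand, and use $\frac{1}{|Q_j|}\int_{Q_j}|f-f_{Q_j}|\le M_{\varphi,\eta}^{\sharp,\triangle}f(x)$ for any $x\in Q_j$. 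When $r(Q_j)\ge 1$, the large-cube part of $M_{\varphi,\eta}^{\sharp,\triangle}$ gives the bound directly. Together these yield a Lebesgue-measure good-$\lambda$ estimate of the form
$$|\{x\in Q_j:M_{\varphi,\eta}^\triangle f(x)>2\lambda,\ M_{\varphi,\eta}^{\sharp,\triangle}f(x)\le\gamma\lambda\}|\le C\gamma|Q_j|$$
for suitably small $\gamma>0$.

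Upgrading this to a weighted good-$\lambda$ inequality is where the hypothesis $\omega\in A_\infty^\infty(\varphi)$ enters: it supplies a reverse-H\"older/$A_\infty$-type comparison $\omega(E)/\omega(Q)\le C(|E|/|Q|)^\delta$ for measurable $E\subset Q$ (possibly with a controllable $\varphi$-dependent factor). Applying this inside each $Q_j$ and summing produces
$$\omega(\{M_{\varphi,\eta}^\triangle f>2\lambda,\ M_{\varphi,\eta}^{\sharp,\triangle}f\le\gamma\lambda\})\le C\gamma^\delta\omega(\{M_{\varphi,\eta}^\triangle f>\lambda\}).$$
Plugging this into the layer-cake identity $\|M_{\varphi,\eta}^\triangle f\|_{L^p(\omega)}^p=p\int_0^\infty\lambda^{p-1}\omega(\{M_{\varphi,\eta}^\triangle f>\lambda\})d\lambda$, choosing $\gamma$ small enough to absorb the resulting multiple of $\|M_{\varphi,\eta}^\triangle f\|_{L^p(\omega)}^p$ into the left-hand side, and applying a truncation argument to justify the finiteness a priori, completes the proof.

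The main obstacle is verifying the precise reverse-H\"older inequality for $A_\infty^\infty(\varphi)$ weights in a form compatible with both the small-cube and large-cube regimes; the $\varphi(Q)^\eta$ modifier prevents a direct invocation of the classical $A_\infty$ self-improvement, so one must establish a quantitative comparison whose $\varphi$-dependent constant does not spoil the absorption step in the final layer-cake argument. Bookkeeping the distinction between $r(Q_j)<1$ and $r(Q_j)\ge 1$ in the stopping-time decomposition is a secondary but nontrivial technicality.
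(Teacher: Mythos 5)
This lemma is cited from Tang \cite{b24}, so the paper supplies no internal proof to compare against; I will assess your blueprint on its own.

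Your good-$\lambda$ strategy is essentially correct, and the first inequality via Lebesgue differentiation is fine. However, you stop short of closing the issue you yourself flag as the ``main obstacle'': that the $A_\infty^\infty(\varphi)$ comparison (Lemma \ref{Lemma3.7}: $\omega(E)/\omega(Q)\le C\varphi(Q)^{\eta'}(|E|/|Q|)^\delta$) carries a $\varphi(Q)^{\eta'}$ factor that appears to threaten the absorption step. In fact this factor is harmless, and the resolution is precisely the large-cube observation you already made but did not carry through. After decomposing $\{M_{\varphi,\eta}^{\triangle}f>\lambda\}$ into maximal dyadic Calder\'on--Zygmund cubes $Q_j$, split into two cases. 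If $r(Q_j)\ge 1$, the large-cube supremum in the definition of $M_{\varphi,\eta}^{\sharp,\triangle}$ gives $M_{\varphi,\eta}^{\sharp,\triangle}f(x)\ge\frac{1}{\varphi(Q_j)^\eta|Q_j|}\int_{Q_j}|f|>\lambda$ at \emph{every} $x\in Q_j$, so for $\gamma<1$ the exceptional set $\{x\in Q_j:\ M_{\varphi,\eta}^{\triangle}f>c\lambda,\ M_{\varphi,\eta}^{\sharp,\triangle}f\le\gamma\lambda\}$ is \emph{empty} and no weighted comparison is needed there at all. If $r(Q_j)<1$, then $\varphi(Q_j)<2$ and the dyadic parent $\hat{Q}_j$ has $\varphi(\hat{Q}_j)<3$, so the $\varphi(Q_j)^{\eta'}$ factor is a fixed constant; moreover maximality of $Q_j$ gives $|f_{Q_j}|\le 2^n\varphi(\hat{Q}_j)^\eta\lambda\le C\lambda$, and your classical splitting of $(f-f_{Q_j})\chi_{Q_j}$ with the weak-$(1,1)$ bound yields the Lebesgue-measure good-$\lambda$ estimate with a uniform constant, after which Lemma \ref{Lemma3.7} gives $\omega(E_j)\le C\gamma^\delta\omega(Q_j)$ with $C$ independent of $Q_j$. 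Summing over $j$ and running the layer-cake identity with the truncation argument you describe then closes the proof. So the approach is correct; the only gap is that you did not connect your own observation about large cubes to the fact that those $Q_j$ produce empty exceptional sets, which is exactly what neutralizes the $\varphi$-factor you were worried about.
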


\begin{lemma}\label{Lemma3.7}{\rm(See \cite{b28}, Lemma 2.2)}  
Let $ 0< \theta < \infty ,1\le p< \infty  $ and $ E $ be any measurable subset 
of a cube $ Q $. If $ w\in A_{p}^{\theta } (\varphi) $, then there exist 
positive constants $ 0< \delta < 1 $, $ \eta  $ and $ C $ such that 
\begin{align*}
\frac{w(E)}{w(Q)} \leq C 
\varphi(Q)^{\eta}\left(\frac{|E|}{|Q|}\right)^{\delta}.
\end{align*}
\end {lemma}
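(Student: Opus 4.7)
The plan is to fix a cube $Q := Q(z,r)$ and bound, uniformly in $Q$, the integrand $\varphi(Q)^\alpha v_{\vec w}(Q)^{-\lambda}\|T_{\prod\vec b}(\vec f)\chi_Q\|_{L^p(v_{\vec w})}$ of the Morrey norm by a sum of pieces each dominated by the integrand defining $\|\vec f\|_{M_{\beta,\lambda}^{\vec p,l}(v_{\vec w},\vec w)}$; the outer $L^l(dz)$-integration then passes through by Minkowski. For each such $Q$, set $Q^* := 8Q$ and split $f_j = f_j^0 + f_j^\infty$ with $f_j^0 := f_j\chi_{Q^*}$, so that multilinearity decomposes $T_{\prod\vec b}(\vec f)$ into $2^m$ pieces indexed by $\vec\alpha\in\{0,\infty\}^m$.

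The fully local piece $\vec\alpha=\vec 0$ is handled directly by Theorem~\ref{Theorem2.2} applied to the truncated data, giving
$$\|T_{\prod\vec b}(f_1^0,\ldots,f_m^0)\|_{L^p(v_{\vec w})} \le C\prod_{j=1}^m\|b_j\|_{BMO_{\theta_j}(\varphi)}\prod_{j=1}^m\|f_j\chi_{Q^*}\|_{L^{p_j}(w_j)}.$$
To match this against the Morrey norm of $\vec f$ evaluated at $Q^*$, one must absorb the ratio $v_{\vec w}(Q^*)/v_{\vec w}(Q)$, which by the $A_{\vec p/q'}^\theta(\varphi)$ hypothesis (together with Lemma~\ref{Lemma3.3} placing $v_{\vec w}$ in the appropriate $A_\infty^\infty(\varphi)$ class and Lemma~\ref{Lemma3.7}) is controlled by a power of $\varphi(Q)$. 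This is precisely the mechanism forcing the right-hand side to carry $\beta=\alpha+mp\lambda\theta/q'$ rather than $\alpha$.

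The mixed pieces $\vec\alpha\ne\vec 0$ are the main obstacle. For them, I would first apply the subset expansion of Definition~\ref{definition1.6},
$$\prod_{j=1}^m(b_j(x)-b_j(y_j))=\sum_{j=0}^m\sum_{\xi\in C_j^m}(-1)^{m-j}\prod_{i\in\xi}(b_i(x)-b_{iQ})\prod_{i\in\xi'}(b_i(y_i)-b_{iQ}),$$
pulling the $\prod_{i\in\xi}(b_i(x)-b_{iQ})$ factors outside $T$ and keeping the $(b_i(y_i)-b_{iQ})$ factors inside the kernel integration. For indices $i$ with $\alpha_i=\infty$, the integration region $(Q^*)^c$ is split into dyadic annuli $2^{k+1}Q\setminus 2^kQ$; on each annulus, a Hölder application in $y$ with exponent $q$ combined with the kernel smoothness condition~\eqref{1.5} (or the size condition~\eqref{1.4} for the purely far contribution) produces a factor $CC_k\,2^{-kmn/q'}$, while Lemma~\ref{Lemma3.1}(ii) controls each BMO-average by $Ck\|b_i\|_{BMO_{\theta_i}(\varphi)}\varphi(2^{k+1}Q)^{\theta_i}$. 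Since up to $m$ indices can each contribute a factor of $k$ in a single term, the cumulative $k$-dependence is $k^m$, which is exactly what makes the $k$-summation converge under the assumption $\sum_k k^m C_k<\infty$.

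The outside factors $\prod_{i\in\xi}|b_i(x)-b_{iQ}|$ are then brought inside the $L^p(v_{\vec w})$ norm on $Q$ and handled by Hölder and Lemma~\ref{Lemma3.1}(i), contributing an extra $\prod_{i\in\xi}\|b_i\|_{BMO_{\theta_i}(\varphi)}\varphi(Q)^{\theta_i}$ with no $k$-growth. Collecting all $\varphi$-powers across the expansion, combining them with the $v_{\vec w}(2^{k+1}Q)/v_{\vec w}(Q)$ ratios controlled via the $A_{\vec p/q'}^\theta(\varphi)$ property and Lemma~\ref{Lemma3.7}, and regrouping the $\|f_i\chi_{2^{k+1}Q}\|_{L^{p_i}(w_i)}$ factors after taking $L^l(dz)$ via Minkowski through both the $k$-sum and the subset sum over $\xi$ yields the stated estimate. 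The hardest part is the bookkeeping of $\varphi$-exponents across the subset expansion and verifying that each $\xi\in C_j^m$ contributes precisely the right power of $\varphi(Q)$ so that the cumulative factor on the right-hand side is exactly $\varphi(Q)^\beta$; conceptually the argument is a lift of the established iterated-commutator recipe tuned to the $\varphi$-weighted setting.
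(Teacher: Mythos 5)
Your proposal does not address Lemma \ref{Lemma3.7} at all. Lemma \ref{Lemma3.7} is an $A_\infty$-type absolute-continuity estimate for a \emph{single} weight $w\in A_p^\theta(\varphi)$: for every measurable subset $E$ of a cube $Q$ one must show $\frac{w(E)}{w(Q)}\le C\varphi(Q)^{\eta}\bigl(\frac{|E|}{|Q|}\bigr)^{\delta}$. What you have written is instead a sketch of the proof of Theorem \ref{Theorem2.4} (the Morrey-space boundedness of $T_{\prod\vec b}$), which in fact \emph{invokes} Lemma \ref{Lemma3.7} as an ingredient. The two statements are entirely different in character: one is a pointwise/measure-theoretic property of a single weight, the other is a norm inequality for a multilinear operator. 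Nothing in your argument produces the estimate on $w(E)/w(Q)$.

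A proof of Lemma \ref{Lemma3.7} would run along standard $A_\infty$ lines adapted to the $\varphi$-weighted setting. From $w\in A_p^\theta(\varphi)$ one derives a $\varphi$-corrected reverse H\"older inequality: there exist $\epsilon>0$ and $\sigma>0$ such that
\begin{align*}
\left(\frac{1}{|Q|}\int_Q w(y)^{1+\epsilon}\,dy\right)^{\frac{1}{1+\epsilon}}\le C\,\varphi(Q)^{\sigma}\,\frac{1}{|Q|}\int_Q w(y)\,dy,
\end{align*}
the $\varphi$-factor being unavoidable because $A_p^\theta(\varphi)$ is strictly larger than $A_p$ (e.g.\ $w(x)=1+|x|^{\alpha}$). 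Then for $E\subset Q$, H\"older's inequality gives
\begin{align*}
w(E)\le\left(\int_Q w^{1+\epsilon}\,dy\right)^{\frac{1}{1+\epsilon}}|E|^{\frac{\epsilon}{1+\epsilon}}\le C\,\varphi(Q)^{\sigma}\,|Q|^{-\frac{\epsilon}{1+\epsilon}}\,w(Q)\,|E|^{\frac{\epsilon}{1+\epsilon}},
\end{align*}
which is the claim with $\delta=\epsilon/(1+\epsilon)\in(0,1)$ and $\eta=\sigma$. In the paper this lemma is not proved locally; it is cited from \cite{b28}, Lemma 2.2, so what is expected here is precisely that short reverse-H\"older argument, not the Theorem \ref{Theorem2.4} machinery you supplied.
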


\begin{lemma}\label{Lemma3.8}{\rm(See \cite{b28}, Lemma 2.3)}  
Let $ 0< \theta < \infty$, and $1\le p< \infty  $. If $ w\in A_{p}^{\theta 
}(\varphi ) $, then there exists a positive constant $ C $, such that for any 
$\rho > 1 $,
\begin{align*}
w(\rho Q) \leq C \varphi(\rho Q)^{p \theta} w(Q).
\end{align*}
\end {lemma}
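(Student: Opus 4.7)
The plan is a direct two-step argument that combines Hölder's inequality on the small cube $Q$ with the $A_p^\theta(\varphi)$ condition evaluated on the dilated cube $\rho Q$, in the spirit of the classical proof of the doubling property for Muckenhoupt weights. I would split into the cases $1<p<\infty$ and $p=1$.

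For the main case $1<p<\infty$, the first step is to apply Hölder's inequality on $Q$ in the form $|Q|=\int_Q w^{1/p}w^{-1/p}\,dy$ with conjugate exponents $p$ and $p'$, which yields
$$|Q|^{p}\le w(Q)\left(\int_{Q} w(y)^{-1/(p-1)}\,dy\right)^{p-1}.$$
Since $Q\subset \rho Q$ and $w^{-1/(p-1)}\ge 0$, the reciprocal-weight integral is monotone and the bound also holds with $Q$ replaced by $\rho Q$ in the right-hand factor. The second step is to raise the $A_p^\theta(\varphi)$ condition applied at $\rho Q$ to the $p$-th power, yielding
$$w(\rho Q)\left(\int_{\rho Q} w(y)^{-1/(p-1)}\,dy\right)^{p-1}\le C\,\varphi(\rho Q)^{p\theta}|\rho Q|^{p}.$$
Dividing this by the Hölder bound $|Q|^{p}\le w(Q)\bigl(\int_{\rho Q}w^{-1/(p-1)}\bigr)^{p-1}$ and canceling the common reciprocal-weight factor produces
$$w(\rho Q)\le C\,\varphi(\rho Q)^{p\theta}\frac{|\rho Q|^{p}}{|Q|^{p}}\,w(Q),$$
and the dilation ratio $|\rho Q|^{p}/|Q|^{p}=\rho^{np}$ is absorbed into the constant (which may depend on $\rho$).

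For the endpoint $p=1$, I would instead couple the $A_1^\theta(\varphi)$ inequalities at $\rho Q$ and at $Q$. The former gives $w(\rho Q)\le C\,\varphi(\rho Q)^{\theta}|\rho Q|\inf_{y\in\rho Q}w(y)$, and since $Q\subset\rho Q$ we have $\inf_{\rho Q}w\le \inf_{Q}w$. The latter, rearranged, gives $\inf_{Q}w\ge w(Q)/(C\,\varphi(Q)^{\theta}|Q|)$. Chaining these and using $\varphi(Q)\le \varphi(\rho Q)$ yields the claim, with the volume ratio $|\rho Q|/|Q|$ again absorbed into the constant.

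The main technical point is purely bookkeeping: verifying that the $\varphi$-factors combine to $\varphi(\rho Q)^{p\theta}$ (they do, since the $A_p^\theta(\varphi)$ condition on $\rho Q$ raised to the $p$-th power produces exactly $\varphi(\rho Q)^{\theta\cdot p/p+\theta\cdot p/p'}=\varphi(\rho Q)^{p\theta}$), and recognizing that the geometric dilation factor $\rho^{np}$ is part of the constant. No covering or maximal-function argument is needed, because the $A_p^\theta(\varphi)$ condition, being a supremum over all cubes, applies to $\rho Q$ directly with the same constant as on $Q$.
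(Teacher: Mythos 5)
This lemma is cited from reference [28] rather than proved in the paper, so there is no in-text proof to compare against; I will instead assess the argument on its own merits. Your approach for $1<p<\infty$ is the standard one and is correct: the Hölder step on $Q$, the monotonicity of $\int w^{-1/(p-1)}$ under $Q\subset\rho Q$, the $A_p^\theta(\varphi)$ condition applied at $\rho Q$ raised to the $p$-th power, and the exponent bookkeeping $\theta\cdot 1+\theta\cdot p/p'=p\theta$ all check out. You are also right that the dilation factor $|\rho Q|^p/|Q|^p=\rho^{np}$ cannot be dispensed with: taking $w\equiv 1$ gives $w(\rho Q)=\rho^n w(Q)$ while $\varphi(\rho Q)=1+\rho r\to 1$ as $r\to 0$, so a constant independent of $\rho$ is impossible. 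The statement as quoted should therefore be read with $C$ allowed to depend on $\rho$ (in the applications here $\rho$ is $3$ or $3^{k+1}$, and the $k$-dependence is absorbed by the large parameter $N$), exactly as you observe.

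Your $p=1$ argument, however, has a direction error. After reducing to $w(\rho Q)\le C\,\varphi(\rho Q)^\theta|\rho Q|\inf_Q w$, you invoke the $A_1^\theta(\varphi)$ condition at $Q$ to obtain $\inf_Q w\ge w(Q)/(C\,\varphi(Q)^\theta|Q|)$. That is a \emph{lower} bound on $\inf_Q w$, whereas what is needed at this point is an \emph{upper} bound, so the chain does not close. The fix is immediate and does not require the $A_1^\theta$ condition at $Q$ at all: simply use the elementary inequality $\inf_Q w\le \frac{1}{|Q|}\int_Q w=w(Q)/|Q|$, which gives
\begin{align*}
w(\rho Q)\le C\,\varphi(\rho Q)^\theta|\rho Q|\inf_Q w\le C\,\varphi(\rho Q)^\theta|\rho Q|\frac{w(Q)}{|Q|}=C\rho^n\varphi(\rho Q)^\theta w(Q),
\end{align*}
matching the $p=1$ reading of the claimed estimate. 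With that one substitution, your proof is complete.
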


\begin{lemma}\label{Lemma3.9}{\rm(See \cite{b28}, Lemma 2.9)}  
If $ 0< \theta < \infty  $, $ f\in BMO_\theta(\varphi )  $, $ w\in A_{\infty 
}^{\infty  } (\varphi ) $, and $ 1\le  p< \infty  $, then there exist a positive constant
$ C $ and $ \tilde{s}>0 $ such that for every cube $ Q $, we have 
\begin{align*}
\left ( \frac{1}{w(Q)}\int_{Q} \left | f(x)-f_Q \right |^pw(x) dx \right ) 
^{\frac{1}{p} }\le C\varphi (Q)^{\frac{\tilde{s}}{p} }\left \| f \right 
\|_{BMO_{\theta}(\varphi ) }.
\end{align*}
\end {lemma}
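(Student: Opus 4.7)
The plan is to reduce the weighted $L^p$ oscillation to the Lebesgue measure of the level sets via the layer-cake representation, then control the level sets using the unweighted $L^a$ John–Nirenberg-type bound from Lemma \ref{Lemma3.1}(i), and finally transfer back to $w$-measure using the $A_\infty^\infty(\varphi)$ absolute continuity estimate from Lemma \ref{Lemma3.7}. Since $w\in A_\infty^\infty(\varphi)$ means $w\in A_{p_0}^{\theta_0}(\varphi)$ for some $p_0\ge 1$ and $\theta_0>0$, Lemma \ref{Lemma3.7} provides $\delta\in (0,1)$, $\eta>0$ and $C>0$ such that for any measurable $E\subset Q$,
\begin{align*}
\frac{w(E)}{w(Q)}\le C\,\varphi(Q)^{\eta}\Bigl(\frac{|E|}{|Q|}\Bigr)^{\delta}.
\end{align*}

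Writing $E_t:=\{x\in Q:|f(x)-f_Q|>t\}$, the layer-cake formula gives
\begin{align*}
\int_Q |f(x)-f_Q|^p w(x)\,dx = p\int_0^\infty t^{p-1}w(E_t)\,dt.
\end{align*}
I would split the integral at the natural scale $T_0:=\|f\|_{BMO_\theta(\varphi)}\varphi(Q)^\theta$. On $(0,T_0]$ I would use the trivial bound $w(E_t)\le w(Q)$, which contributes at most $w(Q)T_0^p$. On $(T_0,\infty)$ I would pick an exponent $a\ge 1$ large enough that $a\delta>p$ and combine Chebyshev with Lemma \ref{Lemma3.1}(i):
\begin{align*}
|E_t|\le t^{-a}\int_Q|f-f_Q|^a\,dx \le C^{a}|Q|\bigl(\|f\|_{BMO_\theta(\varphi)}\varphi(Q)^{\theta}\bigr)^{a}t^{-a}.
\end{align*}
Inserting this into Lemma \ref{Lemma3.7} and then into the tail integral,
\begin{align*}
p\int_{T_0}^\infty t^{p-1}w(E_t)\,dt \le C\varphi(Q)^{\eta}w(Q)\bigl(\|f\|_{BMO_\theta(\varphi)}\varphi(Q)^{\theta}\bigr)^{a\delta}\int_{T_0}^\infty t^{p-1-a\delta}\,dt,
\end{align*}
and the condition $a\delta>p$ makes the $t$-integral converge to a multiple of $T_0^{p-a\delta}$, yielding a bound of the form $C\varphi(Q)^{\eta}w(Q)T_0^p$. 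Summing the two pieces, dividing by $w(Q)$, and taking the $p$-th root produces a constant multiple of $\varphi(Q)^{\eta/p+\theta}\|f\|_{BMO_\theta(\varphi)}$, so the conclusion holds with $\widetilde{s}=\eta+p\theta$.

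The main obstacle is choosing the free parameter $a$ compatibly with the $\delta$ produced by Lemma \ref{Lemma3.7}: the $A_\infty^\infty(\varphi)$ exponent $\delta$ depends on $w$ but not on $p$, so once $\delta$ is fixed one must take $a>p/\delta$ and then carry the $C^{a}$ factor from Lemma \ref{Lemma3.1}(i) through the estimate. A related nuisance is bookkeeping of the $\varphi(Q)^{\eta}$ factor, which must be absorbed into $\varphi(Q)^{\widetilde{s}/p}$; since $\varphi(Q)=1+r\ge 1$, the inequality $1+\varphi(Q)^{\eta}\le 2\varphi(Q)^{\eta}$ handles this cleanly. An equivalent route would be to first prove an exponential John–Nirenberg inequality for $BMO_\theta(\varphi)$ by a Calderón–Zygmund stopping-time argument at the scale $T_0$, then combine it with Lemma \ref{Lemma3.7}; both approaches ultimately use the same two inputs, and the distribution-function argument above seems the most direct given the lemmas already recorded in the paper.
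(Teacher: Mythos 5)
Your argument is correct. The paper does not prove this lemma itself but cites it from reference [28], so there is no in-paper proof to compare against; your layer-cake argument, splitting at $T_0=\|f\|_{BMO_\theta(\varphi)}\varphi(Q)^\theta$ and combining the $L^a$ John--Nirenberg bound of Lemma \ref{Lemma3.1}(i) with the absolute-continuity estimate of Lemma \ref{Lemma3.7} (choosing $a>p/\delta$ so the tail converges), is a correct and standard route, and it yields the claimed bound with $\tilde{s}=\eta+p\theta$ (the small point that $A_\infty^\infty(\varphi)$ only gives $w\in A_{p_0}^{\theta_0}(\varphi)$ with possibly $\theta_0=0$ is harmless since $A_{p_0}^{0}(\varphi)\subset A_{p_0}^{\theta}(\varphi)$ for any $\theta>0$, so Lemma \ref{Lemma3.7} applies).
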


\begin{lemma}\label{Lemma3.10}{\rm(See \cite{b13}, Theorem 2.1)}  
Suppose that $T$ is the new multilinear square operator with generalized 
kernel as in Definition \ref{definition1.3} and $  {\textstyle \sum_{k=1}^{\infty }} C_{k}< \infty  $. If  
$0<\delta<1/m$ and $0<\eta<\infty$, there exists a positive constant $C$,  
such that for all bounded measurable functions $\vec f=(f_1,\ldots,f_m)$ with 
compact support, 
\begin{equation}
M_{\delta,\varphi,\eta}^{\sharp, \triangle}(T(\vec f))(x)\leq 
C\mathcal{M}_{q',\varphi,\eta}(\vec f)(x).
\end{equation}
\end {lemma}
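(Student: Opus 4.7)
The plan is to follow the well-established strategy for sharp maximal function estimates of multilinear Calder\'on--Zygmund-type operators, adapted here to the generalized kernel hypothesis (ii) of Definition \ref{definition1.3}, in which smoothness is an $L^q$-averaged condition on dyadic annuli rather than a pointwise Dini bound, and to the two-regime definition of $M^{\sharp,\triangle}_{\varphi,\eta}$. Fix $x$ and a cube $Q:=Q(x_0,r_Q)\ni x$, treating separately $r_Q<1$ and $r_Q\geq 1$. Set $Q^{*}:=8\sqrt{n}\,Q$ and decompose each input as $f_j=f_j^{0}+f_j^{\infty}$ with $f_j^{0}:=f_j\chi_{Q^{*}}$. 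Expanding $\prod_j f_j=\sum_{\vec\alpha\in\{0,\infty\}^m}\prod_j f_j^{\alpha_j}$ yields $2^m$ pieces. Using Minkowski's inequality for the $L^2(dt/t)$ norm built into $T$ together with the subadditivity of $s\mapsto s^\delta$ for $0<\delta<1/m$, we reduce matters to controlling each piece $T(f_1^{\alpha_1},\ldots,f_m^{\alpha_m})$ separately.

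For $r_Q<1$, take the centering constant $c:=|T(f_1^{\infty},\ldots,f_m^{\infty})(x_0)|^{\delta}$ inside $\inf_{c}$. The fully local term ($\vec\alpha=\vec 0$) is handled by Kolmogorov's inequality (Lemma \ref{Lemma3.2}) applied with $p=\delta$ and $u=s$, combined with the weak boundedness assumption (iii) from $L^{s_1}\times\cdots\times L^{s_m}$ into $L^{s,\infty}$ with $s_j\le q'$; this produces a bound $C\prod_j\bigl(\tfrac{1}{|Q^{*}|}\int_{Q^{*}}|f_j|^{q'}\bigr)^{1/q'}$, which is dominated by $\mathcal{M}_{q',\varphi,\eta}(\vec f)(x)$. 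For any mixed piece with at least one $f_j^{\infty}$, the outside region $(\mathbb{R}^n)^m\setminus(Q^{*})^m$ is partitioned into the dyadic annuli $(\Delta_{k+2})^{m}\setminus(\Delta_{k+1})^{m}$. On coordinates where the difference $K_t(z,\vec y)-K_t(x_0,\vec y)$ is estimated, we invoke the smoothness condition \eqref{1.5}; on coordinates handled directly we invoke the size condition \eqref{1.4}. H\"older's inequality with exponent $q$ on each annulus converts $|y_0-y_0'|^{-mn/q'}2^{-kmn/q'}$ into the normalization $1/|2^kQ^{*}|$ required to recognize averages of $|f_j|^{q'}$; summing in $k$ converges by the hypothesis $\sum_k C_k<\infty$ and yields a constant multiple of $\mathcal{M}_{q',\varphi,\eta}(\vec f)(x)$.

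For $r_Q\geq 1$, no centering constant is subtracted, so only $T(\vec f)(z)$ itself needs to be estimated on $Q$. The local term is controlled exactly as in the previous paragraph via Kolmogorov and hypothesis (iii), while the remaining $2^m-1$ pieces are bounded via the size condition \eqref{1.4} alone on dyadic annuli; the extra factor $\varphi(Q)^{-\eta}$ in the definition of $M^{\sharp,\triangle}_{\varphi,\eta}$ for $r_Q\geq 1$ is absorbed by the polynomial decay factor $(1+\sum_j|z-y_j|)^{-N}$ in \eqref{1.4}, since choosing $N$ sufficiently large allows $\varphi(2^kQ^{*})^{\eta}$ to be beaten by $2^{-kN}$ uniformly on the $k$-th annulus.

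The main technical obstacle is the careful cancellation arising from hypothesis (ii): one must verify that applying H\"older's inequality with exponent $q$ to the integrand on $(\Delta_{k+2})^{m}$ exactly compensates the factor $|y_0-y_0'|^{-mn/q'}\,2^{-kmn/q'}$ appearing on the right of \eqref{1.5}, leaving behind the natural average $|2^kQ^{*}|^{-m}$ that allows $\prod_j\bigl(\tfrac{1}{|2^kQ^{*}|}\int_{2^kQ^{*}}|f_j|^{q'}\bigr)^{1/q'}\leq \mathcal{M}_{q',\varphi,\eta}(\vec f)(x)$ to be pulled out of the sum in $k$; once this dimensional balancing is identified, the residual constants $C_k$ sum by assumption. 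The familiar case $C_k=2^{-k\gamma}$ (classical Dini) serves as a useful consistency check for the final constants.
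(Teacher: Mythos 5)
The paper does not actually prove Lemma \ref{Lemma3.10}; it cites it from \cite{b13}. The closest model in the text is the proof of Lemma \ref{Lemma3.11}, so I compare against that. Your skeleton is the right one: two regimes $r<1$ and $r\ge 1$, the split $f_j=f_j^0+f_j^\infty$, Kolmogorov via Lemma \ref{Lemma3.2} and hypothesis (iii) for the all-local term, annular decomposition for the far terms, and absorbing $\varphi(Q)^{-\eta}$ into $(1+\cdot)^{-N}$ when $r\ge 1$.

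There is, however, a genuine gap in the $r<1$ case. You center with $c:=|T(f_1^\infty,\ldots,f_m^\infty)(x_0)|^\delta$, which subtracts only the all-$\infty$ piece, and yet you claim to ``invoke the smoothness condition \eqref{1.5}'' for \emph{every} piece that has at least one $f_j^\infty$. Condition \eqref{1.5} is an $L^q$-annular bound on the kernel difference $K_t(y_0,\vec y)-K_t(y_0',\vec y)$ in the $y_0$-variable alone; it can only be used on a term of the decomposition after that term's value at the reference point has been subtracted. With your choice of $c$, the difference $T(\vec f)(z)-T(f_1^\infty,\ldots,f_m^\infty)(x_0)$ generates a kernel difference solely for the all-$\infty$ term; the strictly mixed terms $T(f_1^{\alpha_1},\ldots,f_m^{\alpha_m})(z)$ (some $\alpha_j=0$, some $\alpha_j=\infty$, not all $\infty$) must then be estimated directly, and \eqref{1.5} is simply not available for them. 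Your remark about ``coordinates'' where the difference is or is not estimated suggests you are treating the kernel smoothness as a per-coordinate property of $\vec y$, which it is not. Two standard repairs exist: (a) aggregate the centering constant over all $\vec\alpha$ in $\mathscr{L}=\{\vec\alpha:\exists\,\alpha_j=\infty\}$, exactly as the paper does with $A_j$ in the proof of Lemma \ref{Lemma3.11}, so that a single kernel difference $K_t(z,\vec y)-K_t(z_0,\vec y)$ appears over the full region $(\mathbb{R}^n)^m\setminus(Q^*)^m$ and \eqref{1.5} applies to it uniformly; or (b) keep your centering, use \eqref{1.5} only for the all-$\infty$ term, and bound each strictly mixed term via the size condition \eqref{1.4} alone, which works because at least one variable is confined to $Q^*$ and supplies the geometric decay $\sum_\ell 2^{-(m-k)n\ell}$. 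A smaller but related point: the paper fixes the reference point $z_0\in 4Q\setminus 3Q$ rather than taking the center $x_0$ of $Q$, so that $|z-z_0|\sim r$ uniformly for $z\in Q$ and the annuli $\Delta_k=Q(z_0,2^k\sqrt{mn}|z-z_0|)$ stay commensurate with $Q^*$; with $x_0$ as reference point and the smaller dilation $Q^*=8\sqrt n\,Q$, the containment $\Delta_2\subset Q^*$ can already fail, and the geometry of the annular decomposition becomes $z$-dependent in an awkward way.
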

\begin{lemma}\label{Lemma3.11}
	Suppose that $T$ is the new multilinear square operator with 
	generalized kernel as in Definition \ref{definition1.3}. Suppose $\vec b\in BMO^{m}_{\vec \theta }(\varphi )$, $\vec \theta =(\theta _1,\dots ,\theta _m)$ with $\theta _j\ge 0,j=1,\dots ,m$. Let ${\textstyle 
		\sum_{k=1}^{\infty }}  k C_k<\infty$, $0<\delta<\varepsilon<1/m$, $q'<l<\infty$, and  
		$\eta >(\mathop{\max}\limits_{1\le j\le m}\theta _j)/(1/\delta -1/\varepsilon )$. There exists a positive constant $C$ such that for all $\vec 
		f=(f_1,\ldots,f_m)$ of bounded measurable functions with compact 
		suppport
		\begin{align*}
			M_{\delta,\varphi,\eta}^{\sharp,\triangle}(T_{\sum  \vec b}(\vec 
		f))(x)\leq C\sum_{j=1}^{m}  \| b_j \|_{BMO_{\theta_j}(\varphi)}
		\left(M_{\varepsilon,\varphi,\eta}^\triangle (T(\vec 
		f))(x)+\mathcal{M}_{l,\varphi,\eta}(\vec f)(x)\right).
	\end{align*}
\end {lemma}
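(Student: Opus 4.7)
The plan is to reduce to each single commutator: since $T_{\sum\vec b} = \sum_{j=1}^m T_{b_j}^j$, it suffices to establish the analogous estimate for every $T_{b_j}^j$ with $\|b_j\|_{BMO_{\theta_j}(\varphi)}$ in place of the sum. Fix $j$, a point $x\in\mathbb{R}^n$, and a dyadic cube $Q=Q(x_Q,r_Q)\ni x$; let $Q^*$ be a suitable fixed dilate of $Q$, set $\lambda:=(b_j)_{Q^*}$, and use the identity
\begin{align*}
T_{b_j}^j(\vec f)(y) = (b_j(y)-\lambda)\,T(\vec f)(y) - T\bigl(f_1,\ldots,(b_j-\lambda)f_j,\ldots,f_m\bigr)(y).
\end{align*}
Decomposing $f_i = f_i^0 + f_i^\infty$ with $f_i^0 := f_i\chi_{Q^*}$ expands the second term into $2^m$ pieces indexed by $\sigma\in\{0,\infty\}^m$: the local piece (all $\sigma_i=0$) and the tail pieces (some $\sigma_i=\infty$).

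For small cubes $r_Q<1$, pick $c_Q$ to be the sum of the tail pieces evaluated at $x_Q$, and bound $\bigl(|Q|^{-1}\int_Q|T_{b_j}^j(\vec f)(y)-c_Q|^\delta dy\bigr)^{1/\delta}$ by three classes of contributions. \emph{Product term.} Apply H\"older with exponents $\varepsilon/\delta$ and $(\varepsilon/\delta)'$ to $|(b_j(y)-\lambda)T(\vec f)(y)|^\delta$; Lemma \ref{Lemma3.1} controls the BMO average by $\|b_j\|_{BMO_{\theta_j}(\varphi)}\varphi(Q^*)^{\theta_j}$, and the remaining $L^\varepsilon$ average of $T\vec f$ is dominated by $\varphi(Q)^{\eta/\varepsilon}M_{\varepsilon,\varphi,\eta}^\triangle(T\vec f)(x)$. \emph{Local piece.} Invoke Kolmogorov (Lemma \ref{Lemma3.2}) with exponent $s=q'/m>\delta$ together with the weak-type $L^{q'}\times\cdots\times L^{q'}\to L^{s,\infty}$ boundedness of $T$ in Definition \ref{definition1.3}(iii); separate the BMO factor attached to $f_j^0$ by a second H\"older with exponents $l/q'$ and $(l/q')'$, and dominate the remaining $L^l$ averages by $\mathcal{M}_{l,\varphi,\eta}(\vec f)(x)$. \emph{Tail pieces.} At least one $f_i$ is supported in $(Q^*)^c$; apply the reverse triangle inequality inside the $L^2(dt/t)$ norm to reduce to kernel differences $|K_t(y,\cdot)-K_t(x_Q,\cdot)|$, invoke the smoothness condition \eqref{1.5} annulus by annulus on $\Delta_{k+2}\setminus\Delta_{k+1}$, dualize the $\vec y$-integral by H\"older with exponent $q$, and note that the BMO average over $2^k Q^*$ produces an extra factor $k\|b_j\|_{BMO_{\theta_j}(\varphi)}\varphi(2^kQ^*)^{\theta_j}$ via Lemma \ref{Lemma3.1}(ii). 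Since $r_Q<1$ all $\varphi$-weights are $O(1)$, and the $k$-series converges by $\sum_k kC_k<\infty$.

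For large cubes $r_Q\ge 1$, drop $c_Q$ and bound $\varphi(Q)^{-\eta}|Q|^{-1}\int_Q|T_{b_j}^j(\vec f)|^\delta dy$ by the same three-part decomposition; this time the $\varphi$-weights survive. The product term contributes the net exponent $\delta(\theta_j+\eta/\varepsilon)-\eta$, which is nonpositive precisely when $\eta\ge\theta_j/(1/\delta-1/\varepsilon)$, the lemma's hypothesis. The local and tail pieces produce only the weaker requirement $\eta\ge\theta_j/(1/\delta-m/l)$, automatic once $l$ is large enough. The main obstacle is the tail bookkeeping: one must track the $\varphi$-weights on successive dyadic annuli, balance them against the BMO factor from Lemma \ref{Lemma3.1}(ii), and verify that the resulting series $\sum_k kC_k$ converges at the correct scale. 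Once this is done, combining the three-part decomposition in both cases $r_Q<1$ and $r_Q\ge 1$ yields the claimed pointwise bound.
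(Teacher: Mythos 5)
The proposal follows essentially the same route as the paper: reduce to a single $T_{b_j}^j$, write $T_{b_j}^j(\vec f)(y)=(b_j(y)-\lambda)T(\vec f)(y)-T(f_1,\ldots,(b_j-\lambda)f_j,\ldots,f_m)(y)$, split each $f_i$ into a local part $f_i\chi_{Q^*}$ and a tail part, treat $r<1$ and $r\ge1$ separately, and bound the product/local/tail pieces with Lemma~\ref{Lemma3.1}, Kolmogorov (Lemma~\ref{Lemma3.2}) plus the weak-type bound, and the smoothness (resp.\ size) condition. The one substantive deviation is your choice of reference point for the subtracted constant: you evaluate the tail pieces at the center $x_Q$, whereas the paper fixes $z_0\in 4Q\setminus 3Q$ and evaluates there. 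The paper's choice guarantees $|z-z_0|\sim r$ uniformly over $z\in Q$, so the annuli $\Delta_k=Q(z_0,2^k\sqrt{mn}|z-z_0|)$ from the smoothness condition \eqref{1.5} align at the same scale as $2^kQ^*$ and the series index $k$ matches cleanly all the way through the use of Lemma~\ref{Lemma3.1}(ii). With $x_Q$ the scale $|y-x_Q|$ varies from $0$ to $O(r)$ as $y$ ranges over $Q$: you must discard the annuli $\Delta_{k+2}\subset Q^*$ (where the tail pieces vanish), re-index the surviving ones to $2^jQ^*$ with $j\approx k-\log_2(r/|y-x_Q|)$ before invoking Lemma~\ref{Lemma3.1}(ii), and then check that the shifted series is still dominated by $\sum_k kC_k$. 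This can be made to work, but it is noticeably more delicate than the paper's version, and your sketch glosses exactly this bookkeeping that you flag as ``the main obstacle.'' Two smaller inaccuracies: for $r<1$ the weights $\varphi(2^kQ^*)$ are \emph{not} $O(1)$ uniformly in $k$; the paper chooses $N>m\eta/l+\theta$ to absorb them, and you must too. And for $r\ge1$ the paper drops the constant and estimates the tail using the size condition \eqref{1.4}, so the resulting series there is geometric in $(2^k)^{m\eta/l+\theta-N}$ and involves no $C_k$; your remark about $\sum_k kC_k$ applies only to the small-cube case.
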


\begin{proof}
	We only prove the case 
	$ \theta _1=\cdots =\theta _m=\theta $ for simplicity. 
	
	Fix $x\in \mathbb{R}^n$, for any dyadic cube $Q:=Q(x_0,r)\ni x$,  
	we consider the following two cases of the sidelength $r$ : $r<1$ and 
	$r\geq1$.
	
	\noindent{\it \rm \textbf{Case 1}:} $r<1$. Let $Q^*:=14n\sqrt{mn}Q$ 
	and $\lambda_j:=(b_j)_{Q^*}$ be the average on $Q^*$ of $b_j$, $j=1,2,\dots,m$, $z_0\in 4Q\setminus 3Q$. 
	We split each $f_j$ as $f_j=f_j^0+f_j^\infty$, for each $f_j^0=f_j \chi_{Q^*}$, 
	$f_j^\infty=f_j-f_j^0 $. There is
	\begin {align*}
	\prod_{j=1}^mf_j(y_j)&=\sum_{\alpha_1,\ldots,\alpha_m\in\{0,\infty\}}f_1^{\alpha_1}(y_1)\cdots
	f_m^{\alpha_m}(y_m)\\
	&=\prod_{j=1}^m f_j^{0}(y_j)+\sum_{(\alpha_1,\ldots,\alpha_m)\in 
		\mathscr{L}}f_1^{\alpha_1}(y_1)\cdots f_m^{\alpha_m}(y_m),
	\end {align*}
	where  $\mathscr{L}:=\{(\alpha_1,\ldots,\alpha_m)$: there is at least one 
	$\alpha_j=\infty\}$.
	
	Taking
	$A_j= \left(\int_{0}^{\infty} \left | \int_{\left(\mathbb{R}^{n}\right)^{m} 
	} \sum_{(\alpha_1,\ldots,\alpha_m)\in \mathscr{L}}K_{t}\left(z_{0} 
	,\vec{y}\right)(\lambda _j-b_j(y_j))\prod_{i=1}^{m} 
	f_{i}^{\alpha_i}\left(y_{i}\right) d \vec{y} \right | ^{2} \frac{d 
		t}{t}\right)^{\frac{1}{2}},$
	we conclude that
	\begin  {align*}
	& |T_{b_j}^{j}(\vec f)(z)-A_j| \\
	&\leq  \Bigg(\int_{0}^{\infty} \bigg | 
	\int_{\left(\mathbb{R}^{n}\right)^{m} }\Big (K_{t}(z, \vec{y})\big ( b_j(z)-b_{j}(y_j))\prod_{i=1}^m f_i(y_i)\\
	&\ \ \ \ - \sum_{(\alpha_1,\ldots,\alpha_m)\in 
		\mathscr{L}}K_{t}\left(z_{0} 
	,\vec{y}\right)(\lambda _j- b_j(y_j))\prod_{i=1}^m f_i^{\alpha _i}(y_i) \Big )
	d \vec{y} \bigg | ^{2} \frac{d 
		t}{t}\Bigg)^{\frac{1}{2}} \\
	&=\Bigg(\int_{0}^{\infty} \bigg | 
	\int_{\left(\mathbb{R}^{n}\right)^{m} }\Big(K_{t}(z, \vec{y})\big ( b_j(z)-\lambda _j)\prod_{i=1}^m f_i(y_i)+K_{t}(z, \vec{y})\big (\lambda _j -b_j(y_j))\prod_{i=1}^m f_i^{0}(y_i)\\
	&\ \ \ \ + \sum_{(\alpha_1,\ldots,\alpha_m)\in 
		\mathscr{L}}K_{t}\left(z
	,\vec{y}\right)(\lambda _j -b_j(y_j))\prod_{i=1}^m f_i^{\alpha _i}(y_i) \\
	&\ \ \ \ - \sum_{(\alpha_1,\ldots,\alpha_m)\in 
		\mathscr{L}}K_{t}\left(z_{0} 
	,\vec{y}\right)(\lambda _j -b_j(y_j))\prod_{i=1}^m f_i^{\alpha _i}(y_i) 
	\Big)d \vec{y} \bigg | ^{2} \frac{d 
		t}{t}\Bigg)^{\frac{1}{2}}\\
	&\leq \Bigg(\int_{0}^{\infty} \bigg | 
	\int_{\left(\mathbb{R}^{n}\right)^{m} }K_{t}(z, \vec{y})\big ( b_j(z)-\lambda _j)\prod_{i=1}^m f_i(y_i)
	d \vec{y} \bigg | ^{2} \frac{d 
		t}{t}\Bigg)^{\frac{1}{2}}\\
	&\ \ \ \ + \Bigg(\int_{0}^{\infty} \bigg | 
	\int_{\left(\mathbb{R}^{n}\right)^{m} }K_{t}(z, \vec{y})\big ( b_j(y_j)-\lambda _j)\prod_{i=1}^m f_i^{0}(y_i)
	d \vec{y} \bigg | ^{2} \frac{d 
		t}{t}\Bigg)^{\frac{1}{2}} \\
	&\ \ \ \ + \sum_{(\alpha_1,\ldots,\alpha_m)\in 
		\mathscr{L}}\left(\int_{0}^{\infty} \left | 
	\int_{\left(\mathbb{R}^{n}\right)^{m} 
	}\big(K_{t}(z, \vec{y})-K_{t}\left(z_{0} 
	,\vec{y}\right)\big)(b_j(y_j)-\lambda _j) \prod_{i=1}^{m} 
	f_{i}^{\alpha_i}\left(y_{i}\right) d \vec{y} \right | ^{2} \frac{d 
		t}{t}\right)^{\frac{1}{2}} \\
	&= |b_j(z)-\lambda _j|T(\vec{f})(z)+T(f_1^{0},\dots ,f_{j-1}^{0},(b_j-\lambda _j)f_{j}^{0},f_{j+1}^{0},\dots ,f_m^{0})(z)\\
	&\ \ \ \ +  \sum_{(\alpha_1,\ldots,\alpha_m)\in 
		\mathscr{L}}\left(\int_{0}^{\infty} \left | 
	\int_{\left(\mathbb{R}^{n}\right)^{m} 
	}\big(K_{t}(z, \vec{y})-K_{t}\left(z_{0} 
	,\vec{y}\right)\big)(b_j(y_j)-\lambda _j) \prod_{i=1}^{m} 
	f_{i}^{\alpha_i}\left(y_{i}\right) d \vec{y} \right | ^{2} \frac{d 
		t}{t}\right)^{\frac{1}{2}}.
	\end {align*}
	We have
	\begin  {align*}
	&\left(\frac1{|Q|}\int_Q |T_{b_j}^{j}(\vec f)(z)-A_j|^\delta dz\right)^{\frac1\delta}\\
	& \leq C \left(\frac1{|Q|}\int_Q |(b_j(z)-\lambda _j)T(\vec f)(z)|^\delta 
	dz\right)^{\frac1\delta}\\
	& \ \ \ \ +C \left(\frac1{|Q|}\int_Q |T(f_1^{0},\dots ,f_{j-1}^{0},(b_j-\lambda _j)f_{j}^{0},f_{j+1}^{0},\dots ,f_m^{0})(z)|^\delta 
	dz\right)^{\frac1\delta}\\
	&\ \ \ \ + C \sum_{(\alpha_1,\ldots,\alpha_m)\in \mathscr{L}}
	\frac{1}{|Q|} \int_{Q}\Bigg(\int_{0}^{\infty} \Big| 
	\int_{\left(\mathbb{R}^{n}\right)^{m} 
	}\big(K_{t}(z, \vec{y})-K_{t}\left(z_{0} 
	,\vec{y}\right)\big) (b_j(y_j)-\lambda _j) \\
	&\ \ \ \times \prod_{i=1}^{m} 
	f_{i}^{\alpha _i}\left(y_{i}\right) d \vec{y} \Big | ^{2} \frac{d 
		t}{t}\Bigg)^{\frac{1}{2}} d z\\
	&:=\uppercase\expandafter{\romannumeral+1}+\uppercase\expandafter{\romannumeral+2}+\uppercase\expandafter{\romannumeral+3}\\
	&:=\uppercase\expandafter{\romannumeral+1}+\uppercase\expandafter{\romannumeral+2}+\sum_{(\alpha_1,\ldots,\alpha_m)\in \mathscr{L}}\uppercase\expandafter{\romannumeral+3}_{\alpha _1\dots \alpha _m}.
	\end {align*}
	
	Choosing $1<q_2<\min\left \{ \varepsilon /\delta,1/(1- \delta)  \right \}$, then select $q_1$ such that $1/q_1+1/q_2=1$, we obtain $1< q_1,q_2< \infty $ and $\delta q_1> 1$. Using  
	H\"{o}lder's inequality and Lemma \ref{Lemma3.1}, we deduce that
	\begin{align}\label{3.2}
		\uppercase\expandafter{\romannumeral+1}&\leq \left( \frac1{|Q|}\int_Q 
		|b_j(z)-\lambda_j|^{\delta q_1}dz\right)^{\frac1{\delta q_1}} \left( \frac1{|Q|}\int_Q |T(\vec{f})(z)|^{\delta 
			q_2}dz\right)^{\frac1{\delta q_2}}\\
		&\leq C\|b_j\|_{BMO_\theta(\varphi)} \left( 
		\frac1{\varphi(Q)^\eta|Q|}\int_Q |T(\vec{f})(z)|^\varepsilon 
		dz\right)^{\frac{1}{\varepsilon} }\nonumber\\
		&\leq C\|b_j\|_{BMO_\theta(\varphi)} 
		M_{\varepsilon,\varphi,\eta}^\triangle 
		(T(\vec{f}))(x).\nonumber
		\end {align}
		
		Let $1/h_j+1/h'_j=1$ and $h_j=t/s_j$. For $1\le s_1,\dots , s_m \leq q'<l$, so $h_j>1$ and $s_jh_j^{'}> 1$. By Lemma \ref{Lemma3.2}, according to 
		$T:L^{s_1}\times\cdots\times L^{s_m}\to L^{s,\infty}$, H\"{o}lder's inqualitiy 
		and Lemma \ref{Lemma3.1}, we have
		\begin{align*}
			\uppercase\expandafter{\romannumeral+2} 
			&\leq C\| T(f_1^0,\dots ,(b_j-\lambda_j)f_j^0,\dots ,f_m^0) \|_{L^{s,\infty}(Q,\frac{dx}{|Q|})}\\
			&\leq C\prod_{i=1,i\ne j}^{m} \left(\frac1{|Q^*|}\int_{Q^*} |f_i(y_i)|^{s_i}dy_i 
			\right)^{\frac1{s_i}} 
			\left(\frac1{|Q^*|}\int_{Q^*} |(b_j(y_j)-\lambda_j)f_j(y_j)|^{s_j}dy_j 
			\right)^{\frac1{s_j}}  \\
			&\leq C\prod_{i=1,i\ne j}^{m} \left(\frac1{|Q^*|}\int_{Q^*} |f_i(y_i)|^{l}dy_i 
			\right)^{\frac1{l}} \left(\frac1{|Q^*|}\int_{Q^*} |b_j(y_j)-\lambda_j|^{s_jh'_j}dy_j 
			\right)^{\frac1{s_jh'_j}}  \\
			& \quad \times  \left(\frac1{|Q^*|}\int_{Q^*} |f_j(y_j)|^{l}dy_j 
			\right)^{\frac1{l}}   \\
			&\leq C \| 
			b_j\|_{BMO_\theta(\varphi)}\prod_{i=1}^{m} \left(\frac1{\varphi(Q^*)^\eta|Q^*|}\int_{Q^*} 
			|f_i(y_i)|^{l}dy_i \right)^{\frac1l}  \\
			&\leq C \| b_j\|_{BMO_\theta(\varphi)}\mathcal{M}_{l,\varphi,\eta}(\vec 
			f)(x).
			\end {align*}
			
			To estimate the term $\uppercase\expandafter{\romannumeral+3}$, let $\Delta_k:=Q(z_0, 2^{k}\sqrt{mn}|z-z_0|)$, $ z\in Q $, $k\in \mathbb{N}_+$, $h=l/q'$ and $1/h+1/h'=1$. It follows from 
			$1<q'<l$ that $h>1$. Since $z\in Q$, $z_0\in 4Q\setminus 3Q$ and $\Delta_2 
			\subset Q^*$, we obtain $(\mathbb{R}^n)^m\setminus (Q^*)^m\subset 
			(\mathbb{R}^n)^m\setminus (\Delta_2)^m$, $|z-z_0|\sim r$ and $\Delta_{k+2} 
			\subset 2^kQ^*$. Taking $N>m\eta/l+\theta$, by H\"{o}lder's inquality, Minkowski's inequality, the smoothness condition and Lemma \ref{Lemma3.1},  we conclude that
			\setlength\abovedisplayskip{10pt}
			\setlength\belowdisplayskip{10pt}
			\begin{align*}
\uppercase\expandafter{\romannumeral+3}_{\alpha _1\dots \alpha _m}
				&\leq \frac{1}{|Q|} \int_{Q}\Bigg(\int_{0}^{\infty} \Big ( 
				\int_{\left(\mathbb{R}^{n}\right)^{m} \setminus (Q^*)^m
				}\left|K_{t}(z, \vec{y})-K_{t}\left(z_{0} 
				,\vec{y}\right)\right| (b_j(y_j)-\lambda _j)\\
				& \quad \times \prod_{i=1}^{m} 
				\left|f_{i}\left(y_{i}\right)\right| d \vec{y} \Big ) ^{2} \frac{d 
					t}{t}\Bigg)^{\frac{1}{2}} d z \\
				%
				&\leq \frac{1}{|Q|} \int_{Q}\sum_{k=1}^{\infty } \Bigg(\int_{0}^{\infty} 
				\bigg(\int_{\left(\Delta_{k+2}\right)^{m} 
					\backslash\left(\Delta_{k+1}\right)^{m}}\left|K_{t}(z,\vec{y})-K_{t}\left
				(z_{0},\vec{y}\right)\right| 
				|b_j(y_j)-\lambda 
				_j|\\
				& \quad \times \prod_{i=1}^{m}|f_{i}(y_{i})| d 
				\vec{y}\bigg)^{2} \frac{d t}{t}\Bigg)^{\frac{1}{2}} d z\\
				&\leq \frac{1}{|Q|} \int_{Q} \sum_{k=1}^{\infty } \Bigg(  
				\int_{0}^{\infty}     \bigg(\int_{\left(\Delta_{k+2}\right)^{m} 
					\backslash\left(\Delta_{k+1}\right)^{m}}\left|K_{t}(z, 
				\vec{y})-K_{t}\left(z_{0} ,\vec{y}\right)\right|^{q}  d \vec{y}  \bigg) 
				^\frac{2}{q} \\
				& \quad \times  \bigg( \int_{\left(\Delta_{k+2}\right)^{m} 
					\backslash\left(\Delta_{k+1}\right)^{m}} 
				(|b_j(y_j)-\lambda 
				_j|\prod_{i=1}^{m}|f_{i}(y_{i})|)^{q'} d \vec{y} \bigg) 
				^\frac{2}{q'}   \frac{dt}{t}\Bigg) ^{\frac{1}{2}} d z\\
				&\leq \frac{1}{|Q|} \int_{Q} \sum_{k=1}^{\infty } \Bigg( 
				\int_{\left(\Delta_{k+2}\right)^{m}} 
				\bigg(|b_j(y_j)-\lambda 
				_j|\prod_{i=1}^{m}|f_{i}(y_{i})| \bigg)^{q'}d 
				\vec{y} \Bigg) ^\frac{1}{q'} \\
				& \quad \times 
				\Bigg(\int_{0}^{\infty}\bigg(\int_{\left(\Delta_{k+2}\right)^{m} 
					\backslash\left(\Delta_{k+1}\right)^{m}}\left|K_{t}(z, 
				\vec{y})-K_{t}\left(z_{0} ,\vec{y}\right)\right|^{q}  d\vec{y}  \bigg) 
				^\frac{2}{q} \frac{dt}{t}\Bigg) ^{\frac{1}{2}}dz \\
				&\leq \frac{C}{|Q|} \int_Q\sum_{k=1}^{\infty} C_k 
				2^{\frac{-kmn}{q'}}|z-z_0|^{\frac{-mn}{q'}}(1+2^k|z-z_0|)^{-N} \\
				& \quad \times \prod_{i=1,i\ne j}^{m} \left( \int_{2^{k}Q^*}|f_i(y_i)|^{q'} 
				dy_i\right)^{\frac1{q'}}
				\left( \int_{2^{k}Q^*}\left( |b_j(y_j)-\lambda_j| 
				|f_j(y_j)|\right)^{q'} dy_j\right)^{\frac1{q'}} dz\\
				&\leq  \frac{C}{|Q|} \int_Q\sum_{k=1}^{\infty} \frac{C_k |2^{k}Q^*|^{\frac{m}{q'} }}{(2^{k}r)^{\frac{mn}{q'} }(1+2^{k}r)^N} \prod_{i=1,i\ne j}^{m} \left(\frac{1}{|2^{k}Q^*|}  \int_{2^{k}Q^*}|f_i(y_i)|^{q'} 
				dy_i\right)^{\frac1{q'}}\\
				& \quad \times \left( \frac{1}{|2^{k}Q^*|}\int_{2^{k}Q^*}\left( |b_j(y_j)-\lambda_j| 
				|f_j(y_j)|\right)^{q'} dy_j\right)^{\frac1{q'}}dz\\
				&\leq  \frac{C}{|Q|} \int_Q\sum_{k=1}^{\infty} \frac{C_k |2^{k}Q^*|^{\frac{m}{q'} }}{(2^{k}r)^{\frac{mn}{q'} }(1+2^{k}r)^N} \prod_{i=1}^{m} \left(\frac{1}{|2^{k}Q^*|}  \int_{2^{k}Q^*}|f_i(y_i)|^{l} 
				dy_i\right)^{\frac1{l}}\\
				& \quad \times \left( \frac{1}{|2^{k}Q^*|}\int_{2^{k}Q^*} |b_j(y_j)-\lambda_j| 
				^{q'h'} dy_j\right)^{\frac1{q'h'}}dz\\
				&\leq  \frac{C}{|Q|} \int_Q\sum_{k=1}^{\infty} \frac{C_k |2^{k}Q^*|^{\frac{m}{q'} }\varphi (2^{k}Q^*)^{\frac{m\eta }{l} }}{(2^{k}r)^{\frac{mn}{q'} }(1+2^{k}r)^N} \prod_{i=1}^{m} \left(\frac{1}{\varphi (2^{k}Q^*)^\eta |2^{k}Q^*|}  \int_{2^{k}Q^*}|f_i(y_i)|^{l} 
				dy_i\right)^{\frac1{l}}\\
				& \quad \times \left( \frac{1}{|2^{k}Q^*|}\int_{2^{k}Q^*} |b_j(y_j)-\lambda_j| 
				^{q'h'} dy_j\right)^{\frac1{q'h'}}dz\\
				&\leq \frac{C}{|Q|}\int_Q\sum_{k=1}^{\infty} \frac{C_k 
					|2^kQ^*|^{\frac{m}{q'}} 
					\varphi(2^kQ^*)^{\frac{m\eta}{l}}}{(2^kr)^{\frac{mn}{q'}}(1+2^kr)^{N}}
				\mathcal{M}_{l,\varphi,\eta}(\vec f)(x) k\| b_j \|_{BMO_\theta (\varphi)} 
				\varphi(2^kQ^*)^\theta  dz\\
				&\leq C\sum_{k=1}^{\infty } kC_k (1+2^kr)^{\frac{m\eta }{l}+\theta -N } \| 
				b_j \|_{BMO_\theta (\varphi)}  \mathcal{M}_{l,\varphi,\eta}(\vec 
				f)(x)\\
				&\leq C\| b_j \|_{BMO_\theta (\varphi)}  \mathcal{M}_{l,\varphi,\eta}(\vec 
				f)(x),
				\end {align*}
				Thus, $\uppercase\expandafter{\romannumeral+3}\leq C\| b_j \|_{BMO_\theta (\varphi)}  \mathcal{M}_{l,\varphi,\eta}(\vec f)(x)$.
				
				\noindent{\it \rm \textbf{Case 2}:} $r\geq1$. Let  
				$\widetilde{Q^*}=8Q$ and $\widetilde{\lambda _j}=(b_j)_{\widetilde{Q^*}}$. We split each $f_j$ as $f_j=f_j^0+f_j^\infty$, for each $f_j^0=f_j \chi_{\widetilde{Q^*}}$, 
				$f_j^\infty=f_j-f_j^0 $. There is
				\begin {align*}
				\prod_{j=1}^mf_j(y_j)
				&=\prod_{j=1}^m f_j^{0}(y_j)+\sum_{(\alpha_1,\ldots,\alpha_m)\in 
					\mathscr{L}}f_1^{\alpha_1}(y_1)\cdots f_m^{\alpha_m}(y_m),
				\end {align*}
				where  $\mathscr{L}:=\{(\alpha_1,\ldots,\alpha_m)$: there is at least one 
				$\alpha_j=\infty\}$.
				
				We deduce that
				\begin{align*}
					T_{b_j}^{j} (\vec f)(z)
					&=\Bigg(\int_{0}^{\infty} \bigg | 
					\int_{\left(\mathbb{R}^{n}\right)^{m} }K_{t}(z, \vec{y})\big ( b_j(z)-\widetilde{\lambda }_j)\prod_{i=1}^m f_i(y_i)\\
					& \quad  + K_{t}(z, \vec{y})(\widetilde{\lambda }_j-b_j(y_j))\prod_{i=1}^m f_i(y_i)d \vec{y} \bigg | ^{2} \frac{d t}{t}\Bigg)^{\frac{1}{2}}\\
					& \le \Bigg(\int_{0}^{\infty} \bigg | 
					\int_{\left(\mathbb{R}^{n}\right)^{m} }K_{t}(z, \vec{y})\big ( b_j(z)-\widetilde{\lambda }_j)\prod_{i=1}^m f_i(y_i)
					d \vec{y} \bigg | ^{2} \frac{d 
						t}{t}\Bigg)^{\frac{1}{2}}\\
					& \quad + \Bigg(\int_{0}^{\infty} \bigg | 
					\int_{\left(\mathbb{R}^{n}\right)^{m} } K_{t}(z, \vec{y})(b_j(y_j)-\widetilde{\lambda }_j)\prod_{i=1}^m f_i^0(y_i)d \vec{y} \bigg | ^{2} \frac{d 
						t}{t}\Bigg)^{\frac{1}{2}}\\
					& \quad +	\sum_{(\alpha_1,\ldots,\alpha_m)\in \mathscr{L}}\Bigg(\int_{0}^{\infty} \bigg | 
					\int_{\left(\mathbb{R}^{n}\right)^{m} }K_{t}(z, \vec{y})(b_j(y_j)-\widetilde{\lambda }_j)\prod_{i=1}^m f_i^{\alpha_i} (y_i)d \vec{y} \bigg | ^{2} \frac{d 
						t}{t}\Bigg)^{\frac{1}{2}}.
	\end {align*}
	We have
	\begin  {align*}
	&\left(\frac1{\varphi (Q)^{\eta }|Q|}\int_Q |T_{b_j}^{j}(\vec f)(z)|^\delta dz\right)^{\frac1\delta}
	\leq \frac{C}{\varphi (Q)^{\eta /\delta }}  \left(\frac1{|Q|}\int_Q |(b_j(z)-\widetilde{\lambda }_j)T(\vec f)(z)|^\delta 
	dz\right)^{\frac1\delta}\\
	& \quad + \frac{C}{\varphi (Q)^{\eta /\delta }}  \left(\frac1{|Q|}\int_Q |T(f_1^{0},\dots ,f_{j-1}^{0},(b_j-\widetilde{\lambda }_j)f_{j}^{0},f_{j+1}^{0},\dots ,f_m^{0})(z)|^\delta 
	dz\right)^{\frac1\delta}\\
	&\quad +C \sum_{(\alpha_1,\ldots,\alpha_m)\in \mathscr{L}}
	\frac{1}{\varphi (Q)^{\eta /\delta }} \left(\frac1{|Q|}\int_Q |T(f_1^{\alpha _1},\dots ,f_{j-1}^{\alpha _{j-1}},(b_j-\widetilde{\lambda }_j)f_{j}^{\alpha _j},f_{j+1}^{\alpha _{j+1}},\dots ,f_m^{\alpha _m})(z)|^\delta 
	dz\right)^{\frac1\delta}\\
	&:=\uppercase\expandafter{\romannumeral+4}+\uppercase\expandafter{\romannumeral+5}+\uppercase\expandafter{\romannumeral+6}\\
	&:=\uppercase\expandafter{\romannumeral+4}+\uppercase\expandafter{\romannumeral+5}+\sum_{(\alpha_1,\ldots,\alpha_m)\in \mathscr{L}}\uppercase\expandafter{\romannumeral+6}_{\alpha_1\ldots\alpha_m}.
	\end {align*}
	
	To estimate $\uppercase\expandafter{\romannumeral+4}$, since $\eta > \theta /\left ( 1/\delta -1/\varepsilon  \right ) $, the following estimate can be obtained similar to \eqref{3.2}.
	\begin  {align*}
	\uppercase\expandafter{\romannumeral+4} \leq C\|b_j\|_{BMO_\theta(\varphi)} 
	M_{\varepsilon,\varphi,\eta}^\triangle 
	(T(\vec{f}))(x).
	\end {align*}
	
	Similar to the estimate of $\uppercase\expandafter{\romannumeral+2}$, since $\eta > \frac{\theta }{\frac{1}{\delta }-\frac{1}{\varepsilon }  } > \frac{\theta }{\frac{1}{\delta } -\frac{m}{l} } $, we have
	\begin  {align*}
	\uppercase\expandafter{\romannumeral+5} \leq C \| b_j\|_{BMO_\theta(\varphi)}\mathcal{M}_{l,\varphi,\eta}(\vec 
	f)(x).
	\end {align*}
	
	For the term $\uppercase\expandafter{\romannumeral+6}$, we obtain $ 
{\textstyle \sum_{j=1}^{m}} |z-y_j|\sim 2^k r$ for $z\in Q$, $k\in \mathbb{N}_+$ and any $\vec y\in (2^{k+3}Q)^m\setminus (2^{k+2}Q)^m$. Taking 
	$N>m\eta/l+\theta$, using H\"{o}lder's inquality, Minkowski’s inequality and 
	the size condition, we have
	\begin{align*}
\uppercase\expandafter{\romannumeral+6}_{\alpha_1\ldots\alpha_m}
		&\leq \frac{1}{\varphi (Q)^{\eta /\delta }|Q|}  \int_{Q}\left(\int_{0}^{\infty} \left | 
		\int_{\left(\mathbb{R}^{n}\right)^{m} 
		}K_{t}(z, \vec{y})(b_j(y_j)-\widetilde{\lambda }_j) \prod_{i=1}^{m} 
		f_{i}^{\alpha _i}\left(y_{i}\right) d \vec{y} \right | ^{2} \frac{d 
			t}{t}\right)^{\frac{1}{2}} d z\\
		&\leq  \frac{C}{\varphi(Q)^{\eta/\delta}|Q|} 
		\int_{Q}\int_{\left(\mathbb{R}^{n}\right)^{m}\backslash(\widetilde{Q^*})^m}
		\Bigg(\int_{0}^{\infty}\left|K_{t}(z,\vec{y})\right|^2\frac{dt}{t}\Bigg)^{\frac{1}{2}}\\
		& \quad \times |b_j(y_j)-\widetilde{\lambda }_j|\prod_{i=1}^{m}|f_{i}(y_{i})|d\vec{y} dz\\
		&\leq\frac{C}{\varphi(Q)^{\eta/\delta}|Q|} 
		\int_Q \sum_{k=1}^{\infty} \int_{(2^{k+3}Q)^m\setminus (2^{k+2}Q)^m}
		\frac{|b_j(y_j)-\widetilde{\lambda }_j|\prod_{i=1}^{m}|f_{i}(y_{i})|d\vec {y}dz}{(\sum_{j=1}^m|z-y_j|)^{mn}(1+\sum_{j=1}^m|z-y_j|)^N}\\
	&\leq \frac{C}{\varphi(Q)^{\eta/\delta}|Q|}\int_Q\sum_{k=1}^{\infty }\frac{|2^{k+3}Q|^m}{(2^{k}r)^{mn}(1+2^{k}r)^{N}}\prod_{i=1,i\ne j}^{m}  
	\left ( \frac{1}{|2^{k+3}Q|} \int_{2^{k+3}Q}|f_i(y_i)| dy_i \right )  \\
	& \quad \times \left ( \frac{1}{|2^{k+3}Q|} \int_{2^{k+3}Q}|(b_j(y_j)-\widetilde{\lambda }_j)f_j(y_j)| dy_j \right ) dz\\
	&\leq  \frac{C}{\varphi(Q)^{\eta/\delta}|Q|} 
	\int_{Q}\sum_{k=1}^{\infty }\frac{|2^{k+3}Q|^m}{(2^{k}r)^{mn}(1+2^{k}r)^{N}}\prod_{i=1}^{m}  
	\left ( \frac{1}{|2^{k+3}Q|} \int_{2^{k+3}Q}|f_i(y_i)|^{l} dy_i \right )^{\frac{1}{l} }\\
	& \quad \times \left (  \frac{1}{|2^{k+3}Q|} \int_{2^{k+3}Q}|b_j(y_j)-\widetilde{\lambda }_j|^{l'}
	dy_j \right )^{\frac{1}{l'} } dz\\
	&\leq\frac{C}{\varphi(Q)^{\eta/\delta}|Q|} 
	\int_{Q}\sum_{k=1}^{\infty }\frac{\varphi (2^{k+3}Q)^{m\eta/l}|2^{k+3}Q|^m}{(2^{k}r)^{mn}(1+2^{k}r)^{N}}\\
	& \quad \times \prod_{i=1}^{m}  
	\left ( \frac{1}{\varphi (2^{k+3}Q)^\eta |2^{k+3}Q|} \int_{2^{k+3}Q}|f_i(y_i)|^{l} dy_i \right )^{\frac{1}{l} }\\
	& \quad \times \left (  \frac{1}{|2^{k+3}Q|} \int_{2^{k+3}Q}|b_j(y_j)-\lambda _j|^{l'} dy_j \right )^{\frac{1}{l'} }dz \\
&\leq  C\sum_{k=1}^{\infty }\frac{\varphi (2^{k+3}Q)^{m\eta/l}|2^{k+3}Q|^m}{(2^{k}r)^{mn}(1+2^{k}r)^{N}}\mathcal{M}_{l,\varphi,\eta}(\vec f)(x) k\| b_j \|_{BMO_\theta 
	(\varphi)} \varphi(2^{k+3}Q)^\theta\\
&\leq C\sum_{k=1}^{\infty } k(2^{k} )^{m\eta /l+\theta -N 
} \| b_j \|_{BMO_\theta (\varphi)}  \mathcal{M}_{l,\varphi,\eta}(\vec 
f)(x)\\
&\leq  C\| b_j \|_{BMO_\theta (\varphi)}  \mathcal{M}_{l,\varphi,\eta}(\vec 
f)(x).
\end {align*}

In conclusion, if $r<1$, we have
$$\left(\frac1{|Q|}\int_Q |T_{b_j}^{j}(\vec f)(z)-A_j|^\delta dz\right)^{\frac1\delta}
\le C  \| b_j \|_{BMO_{\theta}(\varphi)}
\left(M_{\varepsilon,\varphi,\eta}^\triangle (T(\vec 
f))(x)+\mathcal{M}_{l,\varphi,\eta}(\vec f)(x)\right),$$
then
\begin{align*}
	&\mathop{\inf}\limits_{ 
		c}\left(\frac1{|Q|}\int_Q \Big||T_{\sum  \vec b}(\vec f)(z)|^\delta -c\Big| dz\right)^{\frac1\delta}
	\le \left(\frac1{|Q|}\int_Q \Big||T_{\sum  \vec b}(\vec f)(z)|^\delta -|\sum_{j=1}^{m} A_j|^{\delta }\Big|  dz\right)^{\frac1\delta}
	\\
	&\le  \left(\frac1{|Q|}\int_Q | \sum_{j=1}^{m} T_{b_j}^{j}
	(\vec f)(z)-\sum_{j=1}^{m} A_j|^\delta  dz\right)^{\frac1\delta}
			\le C\sum_{j=1}^{m} \left(\frac1{|Q|}\int_Q | T_{b_j}^{j}
			(\vec f)(z)-A_j|^\delta  dz\right)^{\frac1\delta} \\
	&\le C \sum_{j=1}^{m}  \| b_j \|_{BMO_{\theta}(\varphi)}
	\left(M_{\varepsilon,\varphi,\eta}^\triangle (T(\vec 
	f))(x)+\mathcal{M}_{l,\varphi,\eta}(\vec f)(x)\right).
\end{align*}
	
If $r\ge 1$, we obtain
$$\left(\frac1{\varphi (Q)^{\eta }|Q|}\int_Q |T_{b_j}^{j}(\vec f)(z)|^\delta dz\right)^{\frac1\delta}
\le C  \| b_j \|_{BMO_{\theta}(\varphi)}
\left(M_{\varepsilon,\varphi,\eta}^\triangle (T(\vec 
f))(x)+\mathcal{M}_{l,\varphi,\eta}(\vec f)(x)\right),$$
then
\begin{align*}
	&\left(\frac1{\varphi (Q)^{\eta }|Q|}\int_Q |T_{\sum  \vec b}(\vec f)(z)|^\delta dz\right)^{\frac1\delta}
	\le  C\sum_{j=1}^{m} \left(\frac1{\varphi (Q)^{\eta }|Q|}\int_Q |
	T_{b_j}^{j}(\vec f)(z)|^\delta dz\right)^{\frac1\delta}\\
	&\le C  \sum_{j=1}^{m}\| b_j \|_{BMO_{\theta}(\varphi)}
	\left(M_{\varepsilon,\varphi,\eta}^\triangle (T(\vec 
	f))(x)+\mathcal{M}_{l,\varphi,\eta}(\vec f)(x)\right).
\end{align*}
Combining the above two cases, so
\begin{align*}
	M_{\delta ,\varphi,\eta}^{\sharp,\triangle} (T_{\sum  \vec b}(\vec f))(x)
	&\simeq\mathop{\sup}\limits_{Q\ni  x,r<1}\mathop{\inf}\limits_{ 
		c}\Bigg(\frac1{|Q|}\int_Q \Big||T_{\sum  \vec b}(\vec f)(z)|^\delta -c\Big| dz \\
	&+
	\mathop{\sup}\limits_{Q\ni  x,r\ge 1}\frac1{\varphi (Q)^{\eta }|Q|}\int_Q |T_{\sum  \vec b}(\vec f)(z)|^\delta  dz\Bigg)^{\frac1\delta}\\
	&\le C  \sum_{j=1}^{m}\| b_j \|_{BMO_{\theta}(\varphi)}
	\left(M_{\varepsilon,\varphi,\eta}^\triangle (T(\vec 
	f))(x)+\mathcal{M}_{l,\varphi,\eta}(\vec f)(x)\right).
\end{align*}

The proof of the Lemma \ref{Lemma3.11} is finished. 
\end{proof}

\begin{lemma}\label{Lemma3.12} 
	Suppose that $T$ is the new multilinear square operator with 
	generalized kernel as in Definition \ref{definition1.3}. Suppose $\vec b\in BMO^{m}_{\vec \theta }(\varphi )$, $\vec \theta =(\theta _1,\dots ,\theta _m)$ with $\theta _j\ge 0,j=1,\dots ,m$. Let ${\textstyle 
		\sum_{k=1}^{\infty }}  k^m C_k<\infty$, $0<\delta<\varepsilon<1/m$, $q'<l<\infty$,  and 
	$\eta>(\sum_{j=1}^{m} \theta _{j})/(1/\delta -1/\varepsilon )$. There exists a positive constant $C$ such that for all $\vec 
	f=(f_1,\ldots,f_m)$ of bounded measurable functions with compact 
	suppport
	\begin{align*}
M_{\delta,\varphi,\eta}^{\sharp,\triangle}(T_{\prod \vec b}(\vec 
f))(x)&\leq C\prod_{j=1}^m \| b_j \|_{BMO_{\theta_j}(\varphi)}
\left(M_{\varepsilon,\varphi,\eta}^\triangle (T(\vec 
f))(x)+\mathcal{M}_{l,\varphi,\eta}(\vec f)(x)\right)\\
&\ \ \ \ +C\sum_{j=1}^{m-1} \sum_{\xi \in C^{m}_j}\prod_{i=1}^{j} ||b_{\xi (i)}||_{BMO_{\theta_{\xi (i)}}(\varphi ) }
M_{\varepsilon,\varphi,\eta}^\triangle (T_{\prod \vec b_{\xi '}}(\vec 
f))(x).
	\end{align*}
	\end {lemma}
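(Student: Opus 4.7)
The proof follows the same two-case framework as Lemma \ref{Lemma3.11}: fix a dyadic cube $Q=Q(x_0,r)\ni x$ and handle $r<1$ and $r\geq 1$ separately, with the same auxiliary cubes $Q^*=14n\sqrt{mn}\,Q$ (resp.\ $\widetilde{Q^*}=8Q$), averages $\lambda_j=(b_j)_{Q^*}$ (resp.\ $\widetilde\lambda_j=(b_j)_{\widetilde{Q^*}}$), and reference point $z_0\in 4Q\setminus 3Q$. The decisive new ingredient is the $2^m$-term product expansion
\[
\prod_{j=1}^m\bigl(b_j(z)-b_j(y_j)\bigr)=\sum_{\xi\subset\{1,\dots,m\}}\prod_{j\in\xi}\bigl(b_j(z)-\lambda_j\bigr)\prod_{j\in\xi^c}\bigl(\lambda_j-b_j(y_j)\bigr).
\]
After Minkowski's inequality in $L^2(dt/t)$ and pulling the $z$-only factors outside the square-function integral, this gives
\[
T_{\prod\vec b}(\vec f)(z)\leq \sum_{\xi}\prod_{j\in\xi}|b_j(z)-\lambda_j|\,G_\xi(z),
\]
with $G_\xi(z):=T(\{f_i\}_{i\in\xi},\{(\lambda_i-b_i)f_i\}_{i\in\xi^c})(z)$ and the convention $G_{\{1,\dots,m\}}=T(\vec f)$.

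The extremal case $\xi=\{1,\dots,m\}$ yields $\prod_j|b_j(z)-\lambda_j|\cdot T(\vec f)(z)$; iterated H\"older with exponents $q_0,\dots,q_m$ chosen so that each $\delta q_j>1$ and $\delta q_0<\varepsilon$, together with Lemma \ref{Lemma3.1}(i), produces the contribution $\prod_j\|b_j\|_{BMO_{\theta_j}(\varphi)}\,M^\triangle_{\varepsilon,\varphi,\eta}(T(\vec f))(x)$. The extremal case $\xi=\emptyset$ yields $G_\emptyset=T((\lambda_1-b_1)f_1,\dots,(\lambda_m-b_m)f_m)(z)$; splitting $f_j=f_j^0+f_j^\infty$ with $f_j^0=f_j\chi_{Q^*}$ and re-running templates II and III of Lemma \ref{Lemma3.11}, the local piece uses the weak-type bound $L^{s_1}\times\cdots\to L^{s,\infty}$ together with Lemma \ref{Lemma3.2}, H\"older, and Lemma \ref{Lemma3.1}, while each non-local piece invokes Minkowski plus the smoothness \eqref{1.5} (or the size \eqref{1.4} when $r\geq 1$) on $(\Delta_{k+2})^m\setminus(\Delta_{k+1})^m$ and now $m$ iterations of Lemma \ref{Lemma3.1}(ii). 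Each of these $m$ iterations introduces a factor of $k$ in the geometric $k$-sum, producing $k^m$ overall and explaining precisely the strengthened hypothesis $\sum_k k^m C_k<\infty$.

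For each intermediate $\xi$ with $1\leq|\xi|\leq m-1$, we further decompose each inner factor as
\[
\lambda_i-b_i(y_i)=(\lambda_i-b_i(z))+(b_i(z)-b_i(y_i)),\qquad i\in\xi^c,
\]
which, after a second application of Minkowski, gives
\[
G_\xi(z)\leq \sum_{\sigma\subset\xi^c}\prod_{i\in\sigma}|b_i(z)-\lambda_i|\cdot T_{\prod\vec b_{\xi^c\setminus\sigma}}(\vec f)(z),
\]
with the convention $T_{\prod\vec b_\emptyset}:=T$. Writing $A:=\xi\cup\sigma$, the full scalar factor $\prod_{i\in A}|b_i(z)-\lambda_i|$ is estimated by H\"older and Lemma \ref{Lemma3.1} (yielding $\prod_{i\in A}\|b_i\|_{BMO_{\theta_i}(\varphi)}$), while $T_{\prod\vec b_{A^c}}(\vec f)(z)$ feeds $M^\triangle_{\varepsilon,\varphi,\eta}(T_{\prod\vec b_{A^c}}(\vec f))(x)$. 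When $A=\{1,\dots,m\}$ the resulting term merges into the first piece of the stated bound, and when $\emptyset\neq A^c\subsetneq\{1,\dots,m\}$ it produces exactly the second sum $\sum_{j=1}^{m-1}\sum_{\xi\in C^m_j}$ in the statement (with our $A$ in the role of the $\xi$ there).

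The case $r\geq 1$ proceeds in parallel using the $\varphi(Q)^{-\eta}|Q|^{-1}\int_Q|\cdot|^\delta$ half of the sharp maximal, so no centering constant is subtracted; the quantitative hypothesis $\eta>(\sum_j\theta_j)/(1/\delta-1/\varepsilon)$ is used precisely to absorb the cumulative $\varphi(Q^*)^{\sum_j\theta_j}$ factor arising from the $m$-fold use of Lemma \ref{Lemma3.1}. The principal obstacle is the careful bookkeeping for this double expansion (first in $\xi$, then in $\sigma$): one must verify that every surviving term matches one of the two prescribed forms on the right-hand side, and pair the $m$-fold logarithmic growth of Lemma \ref{Lemma3.1}(ii) with the reinforced kernel summability $\sum_k k^m C_k$ that closes the geometric $k$-series.
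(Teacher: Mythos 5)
Your argument is correct and reaches the stated bound, but the decomposition you use differs from the paper's. The paper expands $\prod_{j}(b_j(z)-b_j(y_j))$ (for $m=2$) into a four-term identity that \emph{directly} produces the sub-iterative commutators: after integrating against the kernel, it yields the pieces $|b_1(z)-\lambda_1||b_2(z)-\lambda_2|\,T(\vec f)$, $|b_1(z)-\lambda_1|\,T^2_{b_2}(\vec f)$, $|b_2(z)-\lambda_2|\,T^1_{b_1}(\vec f)$, and $T((b_1-\lambda_1)f_1,(b_2-\lambda_2)f_2)$ in one step, with the last one split into local and non-local parts (and only the non-local part absorbed into the subtracted constant). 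You instead use the standard binomial expansion $\prod_j(b_j(z)-b_j(y_j))=\sum_{\xi}\prod_{j\in\xi}(b_j(z)-\lambda_j)\prod_{j\in\xi^c}(\lambda_j-b_j(y_j))$, so the intermediate $\xi$ produce $T(\{f_i\}_{\xi},\{(\lambda_i-b_i)f_i\}_{\xi^c})$ rather than genuine iterative commutators; you then run a second expansion $\lambda_i-b_i(y_i)=(\lambda_i-b_i(z))+(b_i(z)-b_i(y_i))$ on those terms to recover $T_{\prod\vec b_{\xi^c\setminus\sigma}}(\vec f)$. This two-step route introduces over-counting (each sub-iterative commutator $T_{\prod\vec b_{A^c}}$ arises from several $(\xi,\sigma)$ pairs with $A=\xi\cup\sigma$), which only inflates the constant and is harmless. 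It also has the advantage of being systematic for general $m$, whereas the paper writes out only $m=2$ and appeals to "the general case has no different computations." Your treatments of the remaining pieces — the extremal $\xi=\{1,\dots,m\}$ case via iterated H\"older and Lemma \ref{Lemma3.1}, the $\xi=\emptyset$ case via the weak-type bound and the smoothness condition \eqref{1.5} with the $m$-fold use of Lemma \ref{Lemma3.1}(ii) yielding the $k^m$ factor that matches $\sum_k k^m C_k<\infty$, and the $r\geq 1$ case absorbing $\varphi(Q)^{\sum_j\theta_j}$ via the hypothesis on $\eta$ — all agree with the paper's computations. One small presentational caveat: in the $r<1$ case you write the decomposition as a bound on $T_{\prod\vec b}(\vec f)(z)$ itself rather than on $|T_{\prod\vec b}(\vec f)(z)-A|$; since you later invoke the subtracted constant $A$ when re-running the non-local template, the meaning is clear, but the constant subtraction should be stated explicitly for the $M^{\sharp,\triangle}_{\delta,\varphi,\eta}$ estimate to make sense.
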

 \begin{proof}
	We only prove the case $m=2$ and  $\theta_1=\theta_2=\theta$  for 
	simplicity. In the general case, there are no different computations but only 
	more complicated.
	
	 Fix $x\in \mathbb{R}^n$, for any dyadic cube $Q:=Q(x_0,r)\ni x$,  
	we consider the following two cases of the sidelength $r$ : $r<1$ and 
	$r\geq1$.
	
	\noindent{\it \rm \textbf{Case 1}:} $r<1$. Let $Q^*:=14n\sqrt{2n}Q$, $z_{0}\in 4Q \setminus 3Q$ 
	and $\lambda_j:=(b_j)_{Q^*}$ be the average on $Q^*$ of $b_j$, $j=1,2$. 
	We split each $f_j$ as $f_j=f_j^0+f_j^\infty$, for each $f_j^0=f_j \chi_{Q^*}$, 
	$f_j^\infty=f_j-f_j^0 $. There is
	\begin{align*}
		\prod_{j=1}^2 
		f_j(y_j)
		=\prod_{j=1}^2 f_j^{0}(y_j)+\sum_{(\alpha_1,\alpha_2)\in 
			\mathscr{L}}f_1^{\alpha_1}(y_1) f_2^{\alpha_2}(y_2),
		\end {align*}
		where  $\mathscr{L}:=\{(\alpha_1,\alpha_2)$: there is at least one  
		$\alpha_j=\infty\}$.
		
		Take 
		\begin{align*}
			A:&=\Bigg(\int_{0}^{\infty} \Big | \int_{\left(\mathbb{R}^{n}\right)^{2} 
			}K_{t}\left(z_{0} 
			,y_1,y_2\right) \sum_{(\alpha_1,\alpha_2)\in \mathscr{L}}
			(b_{1}(y_1)-\lambda _1)(b_{2}(y_2)-\lambda _2)\\
			&\quad  \times  f_{1}^{\alpha_1}\left(y_{1}\right)  f_{2}^{\alpha_2}\left(y_{2}\right)d y_1dy_2 \Big | ^{2} \frac{d t}{t}\Bigg)^{\frac{1}{2}}.
			\end {align*}
			This yields
			\setlength\abovedisplayskip{10pt}
				\setlength\belowdisplayskip{10pt}
\begin{align*}
				&\bigg|T_{\prod \vec b}(\vec 
				f)(z)-A\bigg|\\
				& \leq \Bigg(\int_{0}^{\infty} \Big | 
				\int_{\left(\mathbb{R}^{n}\right)^{2} }\Big(K_{t}(z, y_1,y_2) \prod_{j=1}^2 (b_j(z)-b_j(y_j))f_j(y_j)\\
				& \quad - \sum_{(\alpha_1,\alpha_2)\in \mathscr{L}}
				K_t(z_0,y_1,y_2)(b_{1}(y_1)-\lambda _1)(b_{2}(y_2)-\lambda _2)f_{1}^{\alpha_1}\left(y_{1}\right)  f_{2}^{\alpha_2}\left(y_{2}\right)\Big)d y_1dy_2 \Big | ^{2} \frac{d 
					t}{t}\Bigg)^{\frac{1}{2}}\\
				&=\Bigg(\int_{0}^{\infty} \Big | 
				\int_{\left(\mathbb{R}^{n}\right)^{2} }\big(K_{t}(z, y_1,y_2)  
				(b_1(z)-\lambda _1)(\lambda _2-b_2(z))f_1(y_1)f_2(y_2) \\
				& \quad + K_{t}(z, y_1,y_2) (b_1(z)-\lambda _1)(b _2(z)-b_2(y_2))f_1(y_1)f_2(y_2) \\
				& \quad + K_{t}(z, y_1,y_2) (b_2(z)-\lambda _2)(b_1(z)-b_1(y_1))f_1(y_1)f_2(y_2)  \\
				& \quad + K_{t}(z, y_1,y_2) (b_1(y_1)-\lambda _1)(b_2(y_2)-\lambda _2)f_1^{0}(y_1)f_2^{0}(y_2) \\
				& \quad + \sum_{(\alpha_1,\alpha_2)\in \mathscr{L}}
				K_t(z,y_1,y_2)(b_{1}(y_1)-\lambda _1)(b_{2}(y_2)-\lambda _2)f_{1}^{\alpha_1}\left(y_{1}\right)  f_{2}^{\alpha_2}\left(y_{2}\right)\\
				& \quad - \sum_{(\alpha_1,\alpha_2)\in \mathscr{L}}
				K_t(z_0,y_1,y_2)(b_{1}(y_1)-\lambda _1)(b_{2}(y_2)-\lambda _2)f_{1}^{\alpha_1}\left(y_{1}\right)  f_{2}^{\alpha_2}\left(y_{2}\right)
				\big )d y_1dy_2 \Big | ^{2} \frac{d 
					t}{t}\Bigg)^{\frac{1}{2}}\\
				&\leq \Bigg (\int_{0}^{\infty} \Big | 
				\int_{\left(\mathbb{R}^{n}\right)^{2} 
				}K_{t}(z, y_1,y_2)  \prod_{j=1}^{2} 
				(b_j(z)-\lambda _j)f_j(y_j) d y_j \Big | ^{2} \frac{d 
					t}{t}\Bigg)^{\frac{1}{2}}\\
				& \quad + \Bigg (\int_{0}^{\infty} \Big | 
				\int_{\left(\mathbb{R}^{n}\right)^{2} 
				}K_{t}(z, y_1,y_2)  
				(b_1(z)-\lambda _1)(b _2(z)-b_2(y_2))f_1(y_1)f_2(y_2) d y_1dy_2 \Big | ^{2} \frac{d 
					t}{t}\Bigg)^{\frac{1}{2}}\\
				& \quad + 	\Bigg (\int_{0}^{\infty} \Big | 
				\int_{\left(\mathbb{R}^{n}\right)^{2} 
				}K_{t}(z, y_1,y_2)  
				(b_2(z)-\lambda _2)(b _1(z)-b_1(y_1))f_1(y_1)f_2(y_2)d y_1dy_2 \Big | ^{2} \frac{d 
					t}{t}\Bigg)^{\frac{1}{2}}\\
				& \quad + \Bigg (\int_{0}^{\infty} \Big | 
				\int_{\left(\mathbb{R}^{n}\right)^{2} 
				}K_{t}(z, y_1,y_2) (b_1(y_1)-\lambda _1)(b_2(y_2)-\lambda _2)f_1^{0}(y_1)f_2^{0}(y_2)d y_1dy_2 \Big | ^{2} \frac{d 
					t}{t}\Bigg)^{\frac{1}{2}}\\
				& \quad + \sum_{(\alpha_1,\alpha_2)\in \mathscr{L}}\Bigg (\int_{0}^{\infty} \Big | 
				\int_{\left(\mathbb{R}^{n}\right)^{2} 
				}\left ( K_t(z,y_1,y_2)-K_t(z_0,y_1,y_2) \right ) (b_{1}(y_1)-\lambda _1)(b_{2}(y_2)-\lambda _2)\\
				& \quad \times f_{1}^{\alpha_1}\left(y_{1}\right)  f_{2}^{\alpha_2}\left(y_{2}\right)d y_1dy_2 \Big | ^{2} \frac{d t}{t}\Bigg)^{\frac{1}{2}}\\
				&=|(b_1(z)-\lambda_1)(b_2(z)-\lambda_2)|T(f_1,f_2)(z)+|b_1(z)-\lambda_1|
				T_{b_2}^{2}(f_1,f_2)(z)\\
				&\quad + |b_2(z)-\lambda_2|T_{b_1}^{1}(f_1,f_2)(z)+
				T((b_1-\lambda _1)f_1^0,(b_2-\lambda _2)f_2^0)(z)\\
				& \quad + \sum_{(\alpha_1,\alpha_2)\in \mathscr{L}}\Bigg (\int_{0}^{\infty} \Big | 
				\int_{\left(\mathbb{R}^{n}\right)^{2} 
				}\left ( K_t(z,y_1,y_2)-K_t(z_0,y_1,y_2) \right ) (b_{1}(y_1)-\lambda _1)(b_{2}(y_2)-\lambda _2)\\
				& \quad \times f_{1}^{\alpha_1}\left(y_{1}\right)  f_{2}^{\alpha_2}\left(y_{2}\right)d y_1dy_2 \Big | ^{2} \frac{d t}{t}\Bigg)^{\frac{1}{2}},
				\end {align*}
				and
				\begin{align*}
					&\left(\frac1{|Q|}\int_Q \bigg||T_{\prod \vec b}(\vec 
					f)(z)|^\delta-|A|^\delta \bigg| dz\right)^{\frac1\delta}\\
					&\leq \left(\frac1{|Q|}\int_Q \bigg|T_{\prod \vec b}(\vec 
					f)(z)-A\bigg|^\delta dz\right)^{\frac1\delta}\\
					&\leq C\left(\frac 1{|Q|}\int_Q 
					|(b_1(z)-\lambda_1)(b_2(z)-\lambda_2)T(f_1,f_2)(z)|^\delta 
					dz\right)^{\frac1\delta}\\
					& \quad +C\left(\frac1{|Q|}\int_Q 
					|(b_1(z)-\lambda_1)T_{b_2}^2(f_1,f_2)(z)|^\delta 
					dz\right)^{\frac1\delta}\\
					& \quad  + C\left(\frac1{|Q|}\int_Q 
					|(b_2(z)-\lambda_2)T_{b_1}^1(f_1,f_2)(z)|^\delta 
					dz\right)^{\frac1\delta}\\
					& \quad +C\left(\frac1{|Q|}\int_Q 
					|T((b_1-\lambda_1)f_1^0,(b_2-\lambda_2)f_2^0)(z)|^\delta 
					dz\right)^{\frac1\delta}\\
					& \quad + C\sum_{(\alpha_1,\alpha_2)\in \mathscr{L}}
					\frac{1}{|Q|} \int_{Q}\Bigg (\int_{0}^{\infty} \Big |
					\int_{\left(\mathbb{R}^{n}\right)^{2} 
					}\left(K_{t}(z, y_1,y_2)-K_{t}\left(z_{0} 
					,y_1,y_2\right)\right) \\
					& \quad \times (b_1(y_1)-\lambda _1)(b_2(y_2)-\lambda _2)f_{1}^{\alpha _1}\left(y_{1}\right) 
					f_{2}^{\alpha _2}\left(y_{2}\right)d y_1dy_2 \Big | ^{2} \frac{d t}{t}\Bigg)^{\frac{1}{2}} d z \\
					&:=\sum_{i=1}^{5}  \mathrm{I}_i
					:=\sum_{i=1}^{4}  \mathrm{I}_i+\sum_{(\alpha_1,\alpha_2)\in \mathscr{L}}\mathrm{I}_{5\alpha _1\alpha _2}.
					\end {align*}
					
					Since $0< \delta < \varepsilon < \infty $, choose $1< q_3< \min\left \{ \varepsilon /\delta , 1/(1-\delta) \right \} $ so that $q_3<\varepsilon /\delta $ and $\delta q_3^{'}> 1$. Take $q_1,q_2>1$ such that $1/q_1+1/q_2+1/q_3=1$. Using  
					H\"{o}lder's inequality, we deduce that
					\begin{align*}
						\mathrm{I}_1 &\leq C\left( \frac1{|Q|}\int_Q 
						|b_1(z)-\lambda_1|^{\delta q_1}dz\right)^{\frac1{\delta q_1}}
						\left( \frac1{|Q|}\int_Q |b_2(z)-\lambda_2|^{\delta 
							q_2}dz\right)^{\frac1{\delta q_2}}\\
						&  \quad \times \left( \frac1{|Q|}\int_Q |T(f_1,f_2)(z)|^{\delta 
							q_3}dz\right)^{\frac1{\delta q_3}}\\
						&\leq C\|b_1\|_{BMO_\theta(\varphi)} \|b_2\|_{BMO_\theta(\varphi)}\left( 
						\frac1{\varphi(Q)^\eta|Q|}\int_Q |T(f_1,f_2)(z)|^\varepsilon 
						dz\right)^{\frac{1}{\varepsilon} }\\
						&\leq C\|b_1\|_{BMO_\theta(\varphi)} 
						\|b_2\|_{BMO_\theta(\varphi)}M_{\varepsilon,\varphi,\eta}^\triangle 
						(T(f_1,f_2))(x).
						\end {align*}
						
						Since $ \mathrm{I}_{2} $ and $ \mathrm{I}_{3}$ are symmetrical, we only 
						estimate $ \mathrm{I}_{2} $. Using H\"{o}lder's inequality and 
						Lemma \ref{Lemma3.1}, we obtain
						\begin{align*}
							\mathrm{I}_2
							&\leq C\left(\frac1{|Q|}\int_Q |b_1(z)-\lambda_1|^{\delta q'_3}dz 
							\right)^{\frac1{\delta q'_3}} 
							\left(\frac1{|Q|}\int_Q |T_{b_2}^2(f_1,f_2)(z)|^{\delta q_3}dz 
							\right)^{\frac1{\delta q_3}} \\
							&\leq C\| b_1\|_{BMO_\theta (\varphi)} M_{\varepsilon,\varphi,\eta}^\triangle (T_{  b_{2}}^{2}(
							f_1,f_2))(x).
							\end {align*}
							
							For the term $\mathrm{I}_4$, Let $1/h_1+1/h'_1=1$, $1/h_2+1/h'_2=1$, where $h_1=l/s_1$ and $h_2=l/s_2$, then $h_1,h_2>1$. By Lemma \ref{Lemma3.2}, according to 
							$T:L^{s_1}\times\cdots\times L^{s_m}\to L^{s,\infty}$, H\"{o}lder's inqualitiy 
							and Lemma \ref{Lemma3.1}, we have
							\begin{align*}
\mathrm{I}_4
								&\leq C\| 
								T((b_1-\lambda_1)f_1^0,(b_2-\lambda_2)f_2^0) \|_{L^{s,\infty}(Q,\frac{dx}{|Q|})}\\
								&\leq C\left(\frac1{|Q^*|}\int_{Q^*} |(b_1(y_1)-\lambda_1)f_1(y_1)|^{s_1}dy_1 
								\right)^{\frac1{s_1}} 
								\left(\frac1{|Q^*|}\int_{Q^*} |(b_2(y_2)-\lambda_2)f_2(y_2)|^{s_2}dy_2 
								\right)^{\frac1{s_2}}  \\
								&\leq C\prod_{i=1}^{2} \left(\frac1{|Q^*|}\int_{Q^*} |f_i(y_i)|^{l}dy_i \right)^{\frac{1}{l} } 
								\left(\frac1{|Q^*|}\int_{Q^*} |b_1(y_1)-\lambda_1|^{s_1h'_1}dy_1
								\right)^{\frac1{s_1h'_1}}\\
								& \quad \times \left(\frac1{|Q^*|}\int_{Q^*} |b_2(y_2)-\lambda_2|^{s_2h'_2}dy_2 
								\right)^{\frac1{s_2h'_2}}  \\
								&\leq C  \| b_1\|_{BMO_\theta(\varphi)}\| 
								b_2\|_{BMO_\theta(\varphi)}\prod_{i=1}^{2} \left(\frac1{\varphi(Q^*)^\eta|Q^*|}\int_{Q^*} 
								|f_i(y_i)|^{l}dy_i \right)^{\frac{1}{l} } \\
								&\leq C  \| b_1\|_{BMO_\theta(\varphi)}\| b_2\|_{BMO_\theta(\varphi)}\mathcal{M}_{l,\varphi,\eta}(\vec 
								f)(x).
								\end {align*}
								
								For the term $\mathrm{I}_5$,
								let $\Delta_k:=Q(z_0, 2^{k}\sqrt{2n}|z-z_0|)$, $ z\in Q $, $k\in \mathbb{N}_+$, 
								 $h=l/q'$ and $1/h+1/h'=1$. It follows from 
								$1<q'<l$ that $h>1$. Since $z\in Q$, $z_0\in 4Q\setminus 3Q$ and $\Delta_2 
								\subset Q^*$, we obtain $(\mathbb{R}^n)^2\setminus (Q^*)^2\subset 
								(\mathbb{R}^n)^2\setminus (\Delta_2)^2$, $|z-z_0|\sim r$ and $\Delta_{k+2} 
								\subset 2^kQ^*$. Taking $N>2\eta/l+2\theta$, by Minkowski's inequality, 
								H\"{o}lder's inquality, the smoothness condition \eqref{1.5} and Lemma 
								\ref{Lemma3.1},  we conclude that
								\begin{align*}
\mathrm{I}_{5\alpha _1\alpha _2}
									&\leq \frac{1}{|Q|} \int_{Q}\Bigg(\int_{0}^{\infty} \bigg(  
									\int_{\left(\mathbb{R}^{n}\right)^{2} 
										\backslash\left(\Delta_2\right)^{2}}\left|K_{t}(z, 
									y_1,y_2)-K_{t}\left(z_{0} ,y_1,y_2\right)\right| \\
									& \quad \times |b_1(y_1)-\lambda _1||f_{1}\left(y_{1}\right)||b_2(y_2)-\lambda 
									_2||f_{2}(y_{2})| d y_1d y_2 \bigg)^{2} \frac{dt}{t}\Bigg)^{\frac{1}{2}} 
									dz\\
									%
									&\leq \frac{1}{|Q|} \int_{Q}\sum_{k=1}^{\infty } \Bigg(\int_{0}^{\infty} 
									\bigg(\int_{\left(\Delta_{k+2}\right)^{2} 
										\backslash\left(\Delta_{k+1}\right)^{2}}\left|K_{t}(z,y_1,y_2)-K_{t}\left
									(z_{0},y_1,y_2\right)\right| \\
									& \quad \times |b_1(y_1)-\lambda _1||f_{1}\left(y_{1}\right)||b_2(y_2)-\lambda 
									_2||f_{2}(y_{2})|dy_1dy_2\bigg)^{2} \frac{d t}{t}\Bigg)^{\frac{1}{2}} d z\\
									&\leq \frac{1}{|Q|} \int_{Q} \sum_{k=1}^{\infty } \Bigg(  
									\int_{0}^{\infty}     \bigg(\int_{\left(\Delta_{k+2}\right)^{2} 
										\backslash\left(\Delta_{k+1}\right)^{2}}\left|K_{t}(z, 
									y_1,y_2)-K_{t}\left(z_{0} ,y_1,y_2\right)\right|^{q}  d y_1dy_2  \bigg) 
									^\frac{2}{q} \\
									& \quad \times  \bigg( \int_{\left(\Delta_{k+2}\right)^{2} 
										\backslash\left(\Delta_{k+1}\right)^{2}} 
									(\prod_{i=1}^{2} |b_i(y_i)-\lambda _i||f_{i}(y_{i})|)^{q'} d y_1dy_2 \bigg) 
									^\frac{2}{q'}\frac{dt}{t}\Bigg) ^{\frac{1}{2}} d z \\
									&\leq \frac{1}{|Q|} \int_{Q} \sum_{k=1}^{\infty } \Bigg( 
									\int_{\left(\Delta_{k+2}\right)^{2}} 
									\bigg(\prod_{i=1}^{2} |b_i(y_i)-\lambda _i||f_{i}(y_{i})| \bigg)^{q'}d 
									y_1dy_2 \Bigg) ^\frac{1}{q'} \\
									& \quad \times 
									\Bigg(\int_{0}^{\infty}\bigg(\int_{\left(\Delta_{k+2}\right)^{2} 
										\backslash\left(\Delta_{k+1}\right)^{2}}\left|K_{t}(z, 
									y_1,y_2)-K_{t}\left(z_{0} ,y_1,y_2\right)\right|^{q}  d y_1dy_2  \bigg) 
									^\frac{2}{q} \frac{dt}{t}\Bigg) ^{\frac{1}{2}}dz \\
									&\leq \frac{C}{|Q|} \int_Q \sum_{k=1}^{\infty} C_k 
									2^{\frac{-2kn}{q'}}|z-z_0|^{\frac{-2n}{q'}}(1+2^k|z-z_0|)^{-N} \\
									& \quad \times \left( \int_{2^kQ^*}(|b_1(y_1)-\lambda_1||f_1(y_1)|)^{q'} 
									dy_1\right)^{\frac1{q'}}
									\left( \int_{2^kQ^*}\left( |b_2(y_2)-\lambda_2| 
									|f_2(y_2)|\right)^{q'} dy_2\right)^{\frac1{q'}}dz\\
									&\leq \frac{C}{|Q|}\int_Q \sum_{k=1}^{\infty} \frac{C_k 
										|2^kQ^*|^{\frac{2}{q'}} 
									}{(2^kr)^{\frac{2n}{q'}}(1+2^kr)^{N}}
									\left(\frac1{|2^kQ^*|} 
									\int_{2^kQ^*}|(b_1(y_1)-\lambda _1)f_1(y_1)|^{q'} dy_1\right)^{\frac1{q'}}\\
									& \quad \times \left(\frac1{|2^kQ^*|} 
									\int_{2^kQ^*}|(b_2(y_2)-\lambda _2)f_2(y_2)|^{q'} dy_2\right)^{\frac1{q'}}dz
									\\
									&\leq \frac{C}{|Q|}\int_Q \sum_{k=1}^{\infty} \frac{C_k 
										|2^kQ^*|^{\frac{2}{q'}} 
									}{(2^kr)^{\frac{2n}{q'}}(1+2^kr)^{N}}
									\prod_{i=1}^{2} \left(\frac1{|2^kQ^*|} 
									\int_{2^kQ^*}|f_i(y_i)|^{l} dy_i\right)^{\frac1{l}}\\
									& \quad \times \prod_{j=1}^{2} \left(\frac1{|2^kQ^*|} 
									\int_{2^kQ^*}|b_j(y_j)-\lambda _j|^{q'h'} dy_2\right)^{\frac1{q'h'}}dz\\
									&\leq \frac{C}{|Q|}\int_Q \sum_{k=1}^{\infty} \frac{C_k 
										|2^kQ^*|^{\frac{2}{q'} } \varphi (2^kQ^*)^{\frac{2\eta }{l} }
									}{(2^kr)^{\frac{2n}{q'}}(1+2^kr)^{N}}
									\prod_{i=1}^{2} \left(\frac1{\varphi (2^kQ^*)^{\eta }|2^kQ^*|} 
									\int_{2^kQ^*}|f_i(y_i)|^{l} dy_i\right)^{\frac1{l}}\\
									& \quad \times \prod_{j=1}^{2} \left(\frac1{|2^kQ^*|} 
									\int_{2^kQ^*}|b_j(y_j)-\lambda _j|^{q'h'} dy_2\right)^{\frac1{q'h'}}dz
									\\
									&\leq C\sum_{k=1}^{\infty} \frac{C_k 
										|2^kQ^*|^{\frac{2}{q'}} 
										\varphi(2^kQ^*)^{\frac{2\eta}{l}}}{(2^kr)^{\frac{2n}{q'}}(1+2^kr)^{N}}
									\mathcal{M}_{l,\varphi,\eta}(\vec f)(x) k^2\| b_1 \|_{BMO_\theta (\varphi)} \| b_2 \|_{BMO_\theta (\varphi)} 
									\varphi(2^kQ^*)^{2\theta }\\
									&\leq C\sum_{k=1}^{\infty } k^{2}C_k (1+2^kr)^{\frac{2\eta }{l}+2\theta -N } \| 
									b_1\|_{BMO_\theta (\varphi)}\| 
									b_2 \|_{BMO_\theta (\varphi)}  \mathcal{M}_{l,\varphi,\eta}(\vec 
									f)(x)\\
									&\leq C\| 
									b_1 \|_{BMO_\theta (\varphi)}\| b_2 \|_{BMO_\theta (\varphi)}  \mathcal{M}_{l,\varphi,\eta}(\vec 
									f)(x).
									\end {align*}
									
									\noindent{\it \rm \textbf{Case 2}:} $r\geq1$. Let $\widetilde{Q}^*=8 Q$ and $\widetilde{\lambda_j}=(b_j)_{\widetilde{Q}^*}$ be the 
									average on $Q^*$ of $b_j$, $j=1,2$. We split each $f_j$ as $f_j=f_j^0+f_j^\infty$, for 
									each $f_j^0=f_j \chi_{\widetilde{Q}^*}$, $ f_j^\infty=f_j-f_j^0 $. Then
									\begin{align*}
										\prod_{j=1}^{2} f_{j}\left(y_{j}\right) =\prod_{j=1}^2 
										f_j^{0}(y_j)+\sum_{(\alpha_1,\alpha_2)\in 
											\mathscr{L}}f_1^{\alpha_1}(y_1) f_2^{\alpha_2}(y_2),
									\end{align*}
									where  $\mathscr{L}:=\{(\alpha_1,\alpha_2)$: there is at least one 
									$\alpha_j=\infty\}$. 
									
									We deduce that
									\begin{align*}
										&\left(\frac1{\varphi(Q)^{\eta}|Q|}\int_Q |T_{\prod \vec b}(\vec 
										f)(z)|^\delta dz\right)^{\frac1\delta}\\
										&\leq \frac C{\varphi(Q)^{\eta/\delta}}\left(\frac1{|Q|}\int_Q 
										|(b_1(z)-\widetilde{\lambda}_1)(b_2(z)-\widetilde{\lambda}_2)T(f_1,f_2)(z)|^\delta 
										dz\right)^{\frac1\delta}\\
										& \quad +\frac C {\varphi(Q)^{\eta/\delta}}\left(\frac1{|Q|}\int_Q 
										|(b_1(z)-\widetilde{\lambda}_1)T_{b_2}^2(f_1,f_2)(z)|^\delta 
										dz\right)^{\frac1\delta}\\
										& \quad +\frac C{\varphi(Q)^{\eta/\delta}}\left(\frac1{|Q|}\int_Q 
										|(b_2(z)-\widetilde{\lambda}_2)T_{b_1}^1(f_1,f_2)(z)|^\delta 
										dz\right)^{\frac1\delta}\\
										& \quad +\frac C{\varphi(Q)^{\eta/\delta}}\left(\frac1{|Q|}\int_Q 
										|T((b_1-\widetilde{\lambda}_1)f_1^0,(b_2-\widetilde{\lambda}_2)f_2^0)(z)|^\delta 
										dz\right)^{\frac1\delta}\\
										& \quad +C\sum_{(\alpha_1,\alpha_2)\in \mathscr{L}}\frac1{\varphi(Q)^{\eta/\delta}}\left(\frac1{|Q|}\int_Q 
										|T((b_1-\widetilde{\lambda}_1)f_1^{\alpha _1},(b_2-\widetilde{\lambda}_2)f_2^{\alpha _2})(z)|^\delta 
										dz\right)^{\frac1\delta}\\
										&:=\sum_{i=1}^{5} \widetilde{\mathrm{I}_i}
										:=\sum_{i=1}^{4} \widetilde{\mathrm{I}_i}+\sum_{(\alpha_1,\alpha_2)\in \mathscr{L}}\widetilde{\mathrm{I}_5}_{\alpha _1\alpha _2}.
										\end {align*}
										
										Since $\eta > \frac{2\theta }{\frac{1}{\delta }-\frac{1}{\varepsilon }  } $, $ \widetilde{\mathrm{I}_i}$ can be estimated using the same method of $\mathrm{I}_i$, $i=1,2,3,4$. We only estimate $\widetilde{\mathrm{I}_5}$. 
										Let $1/l+1/l'=1$. For any $(y_1,y_2)\in (2^{k+3}Q)^2\setminus (2^{k+2}Q)^2$, $k\in \mathbb{N}_+$, we obtain $ 
										{\textstyle \sum_{j=1}^{2}} |z-y_j|\sim 2^k r$, where $z\in Q$. Taking 
										$N>2\eta/l+2\theta $, using H\"{o}lder's inquality, Minkowski’s inequality, and 
										the size condition, we obtain
										\begin{align*}
											&\widetilde{\mathrm{I}_5}_{\alpha _1\alpha _2}
											\leq  \frac{C}{\varphi (Q)^{\eta /\delta }|Q|} \int_{Q}\Bigg(\int_{0}^{\infty} \bigg( 
											\int_{\left(\mathbb{R}^{n}\right)^{2} 
												\backslash(\widetilde{Q^*})^{2}}\left|K_{t}(z, y_1,y_2)\right|\prod_{i=1}^{2} |b_i(y_i)-\widetilde{\lambda}_i| \\
											& \ \ \ \ \times |f_{i}\left(y_{i}\right)| d y_1d y_2 \bigg)^{2} \frac{dt}{t}\Bigg)^{\frac{1}{2}} 
											dz \\
											&\leq \frac{C}{\varphi (Q)^{\eta /\delta }|Q|} \int_{Q} \int_{\left(\mathbb{R}^{n}\right)^{2} 
												\backslash(\widetilde{Q^*})^{2}}\left(\int_{0}^{\infty} \left |  
											K_{t}(z, \vec{y})\right|^{2} \frac{d 	t}{t}\right)^{\frac{1}{2} }
											\prod_{i=1}^{2} |b_i(y_i)-\widetilde{\lambda}_i||f_{i}\left(y_{i}\right) |d y_1dy_2 d z\\
										&\leq \frac{C}{\varphi (Q)^{\eta /\delta }|Q|} \int_{Q} \sum_{k=1}^{\infty } 
										\int_{(2^{k+3}Q)^2\setminus (2^{k+2}Q)^2}\frac{\prod_{i=1}^{2} |b_i(y_i)-\widetilde{\lambda}_i||f_{i}\left(y_{i}\right)|}{\left ( 
											\sum_{j=1}^{2} \left | z- y_{j}  \right |  \right ) ^{2n}\left ( 1+  
											\sum_{j=1}^{2} \left | z- y_{j}  \right |  \right ) ^{N} } d y_1dy_2 d z \\
										&\leq  \frac C{\varphi (Q)^{\eta /\delta }} \sum_{k=1}^\infty 
										\frac{|2^{k+3}Q|^2}{(2^kr)^{2n}(1+2^kr)^{N}}\prod_{j=1}^2\left(\frac{1}
										{|2^{k+3}Q|}\int_{2^{k+3}Q}|b_j(y_j)-\widetilde{\lambda}_j||f_j(y_j)|dy_j\right)\\
										&\leq  C \sum_{k=1}^\infty 
										\frac{|2^{k+3}Q|^{2}\varphi(2^{k+3}Q)^{2\eta/l}}{(2^kr)^{2n}
											(1+2^kr)^N}
										\prod_{j=1}^2\left(\frac{1}{\varphi(2^{k+3}Q)^{\eta}|2^{k+3}Q|}
										\int_{2^{k+3}Q}|f_j(y_j)|^{l}dy_j\right)^{\frac 1{l}} \\
										& \ \ \ \ \times \prod_{i=1}^2\left(\frac{1}{|2^{k+3}Q|}
										\int_{2^{k+3}Q}|b_i(y_i)-\widetilde{\lambda}_i|^{l'}dy_i\right)^{\frac 1{l'}}\\
										&\leq  C \sum_{k=1}^\infty 
										\frac{|2^{k+3}Q|^{2}\varphi(2^{k+3}Q)^{2\eta/l}}{(2^kr)^{2n}
											(1+2^kr)^N} k^2\| 
										b_1 \|_{BMO_\theta (\varphi)}\| b_2 \|_{BMO_\theta (\varphi)}  \mathcal{M}_{l,\varphi,\eta}(\vec 
										f)(x)\varphi (2^{k+3}Q)^{2\theta }
										\\
										&\leq C\sum_{k=1}^{\infty } k^{2} (2^k)^{\frac{2\eta }{l}+2\theta -N } \| 
										b_1\|_{BMO_\theta (\varphi)}\| 
										b_2 \|_{BMO_\theta (\varphi)}  \mathcal{M}_{l,\varphi,\eta}(\vec 
										f)(x)
										\\
										&\leq C \| 
										b_1\|_{BMO_\theta (\varphi)}\| 
										b_2 \|_{BMO_\theta (\varphi)}  \mathcal{M}_{l,\varphi,\eta}(\vec 
										f)(x).
										\end {align*}
										Combining the above estimates, we obtain
										\begin{align*}
											&M_{\delta,\varphi,\eta}^{\sharp,\triangle}(T_{\prod \vec b}(\vec 
											f))(x)\\
											&\leq C\prod_{j=1}^2 \| b_j \|_{BMO_{\theta_j}(\varphi)}
											\left(M_{\varepsilon,\varphi,\eta}^\triangle (T(\vec 
											f))(x)+\mathcal{M}_{l,\varphi,\eta}(\vec f)(x)\right)\\
											&\ \ \ \ +C\| b_1\|_{BMO_\theta (\varphi)}M_{\varepsilon,\varphi,\eta}^\triangle (T_{  b_{2}}^{2}(
											f_1,f_2))(x)+C\| b_2\|_{BMO_\theta (\varphi)}M_{\varepsilon,\varphi,\eta}^\triangle (T_{  b_{1}}^{1}(
											f_1,f_2))(x) .
											\end {align*}
											The proof of Lemma \ref{Lemma3.12} is finished. 
												\end{proof}

\section{ Proof of Main Results }\label{sec4}

\quad\quad In this section, the boundedness of multilinear commutators and 
multilinear iterative commutators generated by new multilinear square operators with  
generalized kernels and $BMO_{\vec\theta}^m(\varphi)$ functions on   
weighted Lebesgue spaces and weighted Morrey spaces is obtained, 
respectively.

\subsection{Proof of Theorem 2.1}
 \begin{proof}
By Lemma \ref{Lemma3.4} for $\vec \omega \in A_{\vec p/q'}^\infty(\varphi)$, 
there exists a $k>1$, such that $\vec \omega \in A_{\vec 
p/(q'k)}^\infty(\varphi)$ and $p_j/(q'k)> 1$, $j=1,\dots ,m$. 
Let $ l=q'k$, then $\vec \omega \in A_{\vec p/l}^\infty(\varphi)$, $l>q'$ and $ 
p_{j}/l>1 $, $ j=1,\dots ,m $.  
Take $\eta=\eta_0$ in Lemma \ref{Lemma3.5} for $\vec \omega \in A_{\vec 
p/l}^\infty(\varphi)$. It derives from Lemma \ref{Lemma3.3} that $v_{\vec 
\omega} \in A_{mp/l}^\infty (\varphi) \subset A_{p/\delta }^\infty (\varphi)$, 
where $\varepsilon$ and $\delta$ are taken by
\begin{align*}
0<\varepsilon<\frac1m, 
0<\delta<\frac 1{\Big(\frac 1 \varepsilon +\frac{\mathop{\max}\limits_{  1\le j\le m}\theta _j}{\eta_0}\Big)}.
\end{align*}
Using Lemma \ref{Lemma3.6}, Lemma \ref{Lemma3.11}, and Lemma \ref{Lemma3.10},  we deduce that
\begin{align*}
\| T_{\sum  \vec b}(\vec f)\|_{L^{p}(v_{\vec\omega})}
&=\left \| |T_{\sum  \vec b}(\vec f)|^\delta \right \| _{L^{p/ 
		\delta}(v_{\vec\omega})}^{1/\delta }
\leq \| M_{\varphi,\eta_0}^{\triangle}(|T_{\sum  \vec b}(\vec 
f)|^\delta) \|_{L^{p/ \delta}(v_{\vec\omega})}^{1/\delta }\\
&\leq C \| M_{\delta,\varphi,\eta_0}^{\sharp,\triangle}(T_{\sum  \vec b}(\vec 
f)) \|_{L^{p}(v_{\vec\omega})}\\
&\leq  C\sum _{j=1}^m \| b_j \|_{BMO_{\theta_j}(\varphi)}\left ( \| 
M_{\varepsilon,\varphi,\eta_0}^\triangle (T(\vec 
f))\|_{L^{p}(v_{\vec\omega})}+ \|\mathcal{M}_{l,\varphi,\eta_0}(\vec 
f)\|_{L^{p}(v_{\vec\omega})} \right ) \\
&\leq C\sum _{j=1}^m \| b_j \|_{BMO_{\theta_j}(\varphi)}
\|\mathcal{M}_{l,\varphi,\eta_0}(\vec f)\|_{L^{p}(v_{\vec\omega})},
\end{align*}
where
\begin{align*}
\| M_{\varepsilon,\varphi,\eta_0}^\triangle (T(\vec 
f))\|_{L^{p}(v_{\vec\omega})}
&\leq  C \| M_{\varepsilon, \varphi,\eta_0}^{\sharp, \triangle} (T(\vec f)) 
\|_{L^{p}(v_{\vec\omega})}
\leq C \| \mathcal{M}_{q',\varphi,\eta_0}(\vec f) 
\|_{L^{p}(v_{\vec\omega})}\\
&\leq C \| \mathcal{M}_{l,\varphi,\eta_0}(\vec f) \|_{L^{p}(v_{\vec\omega})}.
\end{align*}
Then by Lemma \ref{Lemma3.5},
\begin{align*}
\| T_{\sum  \vec b}(\vec f)\|_{L^{p}(v_{\vec\omega})}
&\leq C \sum _{j=1}^m \| b_j \|_{BMO_{\theta_j}(\varphi)}
\|\mathcal{M}_{\varphi,\eta_0}(|\vec f|^{l})\|^{1/l}_{L^{p/l}(v_{\vec\omega})}\\
&\leq C \sum _{j=1}^m \| b_j \|_{BMO_{\theta_j}(\varphi)}
\prod_{j=1}^m\| |f_j|^{l} \|^{1/l}_{L^{p_j/l}(\omega_j)}\\
&= C \sum _{j=1}^m \| b_j \|_{BMO_{\theta_j}(\varphi)}
\prod_{j=1}^m\| f_j \|_{L^{p_j}(\omega_j)}.
\end{align*}

The proof of Theorem \ref{Theorem2.1} is finished.
\end{proof}

\subsection{Proof of Theorem 2.2}
 \begin{proof}
Take $\eta _0$ and $l$ the same as in the proof of Theorem \ref{Theorem2.1}. Pick $\delta ,\beta _1,\dots ,\beta _m$ such that $0< \beta _m< \frac{1}{m} $, $0< \beta _j< \frac{1}{\frac{\sum_{j=1}^{m} \theta _j}{\eta _0} +\frac{1}{\beta _{j+1}} } $, $j=1,\dots ,m-1$, and $0< \delta < \frac{1}{\frac{\sum_{j=1}^{m} \theta _j}{\eta _0} +\frac{1}{\beta _{1}} }$, then $\eta _0> \frac{\sum_{j=1}^{m} \theta _j}{\frac{1}{\delta } -\frac{1}{\beta _1}  } $ and $\eta _0> \frac{\sum_{j=1}^{m} \theta _j}{\frac{1}{\beta _j } -\frac{1}{\beta _{j+1}}  } $, $j=1,\dots ,m-1$. Applying Lemma \ref{Lemma3.12}, we have
\begin{align*}
&||M_{\delta,\varphi,\eta_0}^{\sharp,\triangle}(T_{\prod \vec b}(\vec 
f))||_{L^{p}(v_{\vec\omega})}\\
&\leq C\prod_{j=1}^m \| b_j \|_{BMO_{\theta_j}(\varphi)}
\left(||M_{\beta _1,\varphi,\eta_0}^\triangle (T(\vec 
f))||_{L^{p}(v_{\vec\omega})}+||\mathcal{M}_{l,\varphi,\eta_0}(\vec f)||_{L^{p}(v_{\vec\omega})}\right)\\
&\ \ \ \ +C\sum_{j=1}^{m-1} \sum_{\xi \in C^{m}_j}\prod_{i=1}^{j} ||b_{\xi (i)}||_{BMO_{\theta _{\xi (i)}}(\varphi )}
||M_{\beta _1,\varphi,\eta_0}^\triangle (T_{\prod \vec b_{\xi '}}(\vec 
f))||_{L^{p}(v_{\vec\omega})}.
\end {align*}
According to Definition \ref{definition1.6}, $\xi =\left \{ \xi (1),\dots , \xi (j)\right \} $, $\xi' =\left \{ \xi (j+1),\dots , \xi (m)\right \} $ and $A_h=\left \{ \xi _1: \xi _1\subset  \xi '\right \} $, where $\xi _1$ is any finite subset of $\xi '$ composed of different elements. Denote $\xi' _1=\xi '-\xi _1$.

Repeating applying Lemma \ref{Lemma3.12} enables us to write
\begin{align*}
	&||M_{\beta _1,\varphi,\eta_0}^{\sharp,\triangle}(T_{\prod \vec b_{\xi '}}(\vec 
	f))||_{L^{p}(v_{\vec\omega})}\\
	&\leq C\prod_{k=j+1}^m \| b_{\xi (k)} \|_{BMO_{\theta_{\xi (k)}}(\varphi)}
	\left(||M_{\beta _2,\varphi,\eta_0}^\triangle (T(\vec 
	f))||_{L^{p}(v_{\vec\omega})}+||\mathcal{M}_{l,\varphi,\eta_0}(\vec f)||_{L^{p}(v_{\vec\omega})}\right)\\
	&\ \ \ \ +C\sum_{h=1}^{m-j-1} \sum_{\xi_1 \in A_h}\prod_{i=1}^{h} ||b_{\xi_1 (i)}||_{BMO_{\theta_{\xi_1 (i)}} (\varphi )}
	||M_{\beta _2,\varphi ,\eta _0}^{\Delta } (T_{\prod \vec b_{\xi '_1}}(\vec 
	f))||_{L^{p}(v_{\vec\omega})}.
\end {align*}
By Lemma \ref{Lemma3.6} and Lemma \ref{Lemma3.11}, we continue in this way to obtain 
\begin{align*}
	&||M_{\delta,\varphi,\eta_0}^{\sharp,\triangle}(T_{\prod \vec b}(\vec 
	f))||_{L^{p}(v_{\vec\omega})}\\
	&\leq C\prod_{j=1}^m \| b_j \|_{BMO_{\theta_j}(\varphi)}
	\Bigg(C_{m+1}(m,n)||\mathcal{M}_{l,\varphi,\eta_0}(\vec f)||_{L^{p}(v_{\vec\omega})}
	+C_{1}(m,n)||{M}_{\beta _1,\varphi ,\eta _0}^{\Delta }(T(\vec f))||_{L^{p}(v_{\vec\omega})}\\
	 &\ \ \ \ + C_{2}(m,n)||{M}_{\beta _2,\varphi ,\eta _0}^{\Delta }(T(\vec f))||_{L^{p}(v_{\vec\omega})}+
	 \cdots +C_{m}(m,n)||{M}_{\beta _m,\varphi ,\eta _0}^{\Delta }(T(\vec f))||_{L^{p}(v_{\vec\omega})}\Bigg),
\end {align*}
where $C_{1}(m,n),C_{2}(m,n),\dots ,C_{m}(m,n),C_{m+1}(m,n)$ are all finite real numbers
associated with $m$ and $n$.

Using Lemma \ref{Lemma3.6}, Lemma \ref{Lemma3.10} and Lemma \ref{Lemma3.5}, we have
\begin{align*}
	&\| T_{\prod \vec b}(\vec f)\|_{L^{p}(v_{\vec\omega})}\le ||M_{\delta,\varphi,\eta_0}^{\triangle}(T_{\prod \vec b}(\vec f))||_{L^{p}(v_{\vec\omega})}\le C
	||M_{\delta,\varphi,\eta_0}^{\sharp,\triangle}(T_{\prod \vec b}(\vec 
	f))||_{L^{p}(v_{\vec\omega})}\\
	&\leq C\prod_{j=1}^m \| b_j \|_{BMO_{\theta_j}(\varphi)}
	\Bigg(C_{m+1}(m,n)||\mathcal{M}_{l,\varphi,\eta_0}(\vec f)||_{L^{p}(v_{\vec\omega})}
	+C_{1}(m,n)||\mathcal{M}_{\beta _1,\varphi,\eta_0}^{\Delta }(T(\vec f))||_{L^{p}(v_{\vec\omega})}\\
	&\ \ \ \ + C_{2}(m,n)||\mathcal{M}_{\beta _2,\varphi ,\eta_0 }^{\Delta }(T(\vec f))||_{L^{p}(v_{\vec\omega})}+
	\cdots +C_{m}(m,n)||\mathcal{M}_{\beta _m,\varphi ,\eta_0 }^{\Delta }(T(\vec f))||_{L^{p}(v_{\vec\omega})}\Bigg)\\
	&\leq C\prod_{j=1}^m \| b_j \|_{BMO_{\theta_j}(\varphi)}
	\Bigg(C_{m+1}(m,n)||\mathcal{M}_{l,\varphi,\eta_0}(\vec f)||_{L^{p}(v_{\vec\omega})}
	+C_{m+2}(m,n)||M_{\beta _m,\varphi,\eta_0}^{\triangle}(T(\vec f))||_{L^{p}(v_{\vec\omega})}\Bigg)\\
	&\leq C\prod_{j=1}^m \| b_j \|_{BMO_{\theta_j}(\varphi)}
	\Bigg(C_{m+1}(m,n)||\mathcal{M}_{l,\varphi,\eta_0}(\vec f)||_{L^{p}(v_{\vec\omega})}
	+C_{m+2}(m,n)||M_{\beta _m,\varphi,\eta_0}^{\sharp,\triangle}(T(\vec f))||_{L^{p}(v_{\vec\omega})}\Bigg)\\
	&\leq C\prod_{j=1}^m \| b_j \|_{BMO_{\theta_j}(\varphi)}
	\Bigg(C_{m+1}(m,n)||\mathcal{M}_{l,\varphi,\eta_0}(\vec f)||_{L^{p}(v_{\vec\omega})}
	+C_{m+2}(m,n)||\mathcal{M}_{q',\varphi,\eta_0}(\vec f)||_{L^{p}(v_{\vec\omega})}\Bigg)\\
	&\leq C\prod_{j=1}^m \| b_j \|_{BMO_{\theta_j}(\varphi)}
	||\mathcal{M}_{l,\varphi,\eta_0}(\vec f)||_{L^{p}(v_{\vec\omega})}
	= C\prod_{j=1}^m \| b_j \|_{BMO_{\theta_j}(\varphi)}
	||\mathcal{M}_{\varphi,\eta_0}(|\vec f|^{l})||_{L^{p/l}(v_{\vec\omega})}^{1/l}
	\\
	&\leq C\prod_{j=1}^m \| b_j \|_{BMO_{\theta_j}(\varphi)}
	\prod_{j=1}^m \| f_j \|_{L^{p_j}(\omega_j)},
\end {align*}

The proof of Theorem \ref{Theorem2.2} is finished.
\end{proof}

\subsection{Proof of Theorem 2.3}
\begin{proof}We only prove the case  $ T_{b_j}^{j}  $ and $ b_j\in 
BMO_{\tilde{\theta}} (\varphi )  $, 
$ \theta _1=\cdots =\theta _m=\tilde{\theta} $ for simplicity. In the general 
case,  the calculation is not different, but only more complicated. Let $ 
Q:=Q(z,r) $  and $ Q^\ast :=3Q $. We split each $f_j$ as 
$f_j=f_j^0+f_j^\infty$, for each $f_j^0=f_j \chi_{Q^*}$, $ f_j^\infty=f_j-f_j^0 
$. We obtain for $x\in Q(z,r)$,
\begin{align*}
|T_{b_j}^{j} (\vec f)(x)|
&\leq |T_{b_j}^{j} (\vec f\chi _{Q^\ast })(x)|+\sum_{(\alpha_1,\ldots,\alpha_m)\in 
	\mathscr{L}}  | 
T(f_{1}^{\alpha_{1}},\ldots,(b_j(x)-b_j)f_{j}^{\alpha_{j}},\ldots 
,f_{m}^{\alpha_{m}})(x)|\\
&\leq|T_{b_j}^{j} (\vec f\chi _{Q^\ast })(x)|+\sum_{(\alpha_1,\ldots,\alpha_m)\in 
	\mathscr{L}}  \Big (| 
T(f_{1}^{\alpha_{1}},\ldots,(b_j(x)-b_{j,Q})f_{j}^{\alpha_{j}},\ldots 
,f_{m}^{\alpha_{m}})(x)|\\
&\quad + | 
T(f_{1}^{\alpha_{1}},\ldots,(b_{j,Q}-b_j)f_{j}^{\alpha_{j}},\ldots 
,f_{m}^{\alpha_{m}})(x)|\Big )\\
&:=\mathrm{I}'+\mathrm{II}',
\end{align*}
where  $\mathscr{L}:=\{(\alpha_1,\ldots,\alpha_m)$: there is at least one 
$\alpha_j=\infty\}, j=1,\ldots,m$.

For the term $ \mathrm{II}' $.  Let us consider $\alpha_j=\infty $ for $ j=1,\cdots ,\xi 
$ and $ \alpha_j=0$ for $ j=\xi+1,\cdots ,m $. There are two cases.
\begin{align*}
|T(f_{1}^{\infty },\ldots ,(b_j(x)-b_{j,Q})f_{j}^{\infty } ,\ldots ,f_{\xi }^{\infty 
},f_{\xi +1}^{0},\ldots ,f_{m}^{0})(x)|\\
+|T(f_{1}^{\infty },\ldots 
,(b_{j,Q}-b_j)f_{j}^{\infty } ,\ldots,f_{\xi }^{\infty },f_{\xi+1}^{0},\ldots 
,f_{m}^{0})(x)|
\end{align*}
or
\begin{align*}
|T(f_{1}^{\infty },\ldots,f_{\xi }^{\infty } ,f_{\xi +1}^{0},\ldots 
,(b_j(x)-b_{j,Q})f_{j}^{0},\ldots,f_{m}^{0})(x)|\\
+|T(f_{1}^{\infty },\ldots,f_{\xi }^{\infty } 
,f_{\xi +1}^{0},\ldots ,(b_{j,Q}-b_j)f_{j}^{0},\ldots ,f_{m}^{0})(x)|.
\end{align*}

It is sufficient to prove the following, and the other case is exactly similar.
\begin{align*}
&|T(f_{1}^{\infty },\ldots ,(b_j(x)-b_{j,Q})f_{j}^{\infty } ,\ldots ,f_{\xi }^{\infty 
},f_{\xi +1}^{0},\ldots ,f_{m}^{0})(x)|\\
&\quad  +|T(f_{1}^{\infty },\ldots 
,(b_{j,Q}-b_j)f_{j}^{\infty } ,\ldots,f_{\xi }^{\infty },f_{\xi+1}^{0},\ldots 
,f_{m}^{0})(x)|\\
&\le |(b_j(x)-b_{j,Q})T(f_{1}^{\infty },\ldots,f_{j}^{\infty } ,\ldots 
,f_{\xi }^{\infty },f_{\xi +1}^{0},\ldots ,f_{m}^{0})(x)|\\
& \quad +|T(f_{1}^{\infty },\ldots,(b_j-b_{j,Q})f_{j}^{\infty } ,\ldots 
,f_{\xi }^{\infty },f_{\xi +1}^{0},\ldots,f_{m}^{0})(x)|.
\end{align*}

Using Minkowski's inequality, the size condition \eqref{1.4}, Hölder’s 
inequality and $ \vec w\in A_{\vec p/q'}^{\theta  }(\varphi )$, we conclude that
\begin{align}\label{4.4} 
&|T(f_{1}^{\infty },\ldots ,f_{j}^{\infty } ,\ldots ,f_{\xi }^{\infty 
},f_{\xi +1}^{0},\ldots ,f_{m}^{0})(x)| \nonumber\\
&\le \left(\int_{0}^{\infty} \bigg( \int_{(\mathbb{R}^{n})^{m}\backslash 
	(Q^\ast )^{m}} \left|K_{t}(x, \vec{y})\right| \cdot \prod_{j=1}^{m} 
\left|f_{j}\left(y_{j}\right)\right| d \vec{y} \bigg)^{2} 
\frac{d t}{t}\right)^{\frac{1}{2}} \nonumber\\
&\leq \int_{(\mathbb{R}^{n})^{m}\backslash (Q^\ast)^{m}}\left(\int_{0}^{\infty} 
\left |  K_{t}(x, \vec{y})\right|^{2} \frac{d 	t}{t}\right)^{\frac{1}{2} 
}\cdot \prod_{j=1}^{m} \left|f_{j}\left(y_{j}\right)\right| d \vec{y} 
\nonumber\\
&\leq C \int_{(\mathbb{R}^{n})^{m}\backslash 
(Q^\ast)^{m}}\frac{\left|f_{1}\left(y_{1}\right) \cdots 
f_{m}\left(y_{m}\right)\right|}{\left(\sum_{i=1}^{m}\left|x-y_{i}\right|
\right)^{mn}\left(1+\sum_{i=1}^{m}\left|x-y_{i}\right|\right)^{N}} d \vec{y} 
\nonumber\\
& \leq C \sum_{k=1}^{\infty} \frac{1}{\left | 3^{k+1}Q \right |^{m}  
\varphi\left(3^{k+1} Q\right)^{N}} \int_{\left(3^{k+1} Q\right)^{m}} 
\prod_{i=1}^{m}\left|f_{i}\left(y_{i}\right)\right| d \vec{y} \nonumber\\
& \leq C\sum_{k=1}^{\infty} \varphi\left(3^{k+1} Q\right)^{-N}
\prod_{i=1}^{m} \left ( \frac{1}{\left|3^{k+1} Q\right|} \int_{3^{k+1} 
Q}\left|f_{i}\left(y_{i}\right)\right|^{q'} d y_{i} \right )^{\frac{1}{q'} } 
\nonumber\\
& \leq C\sum_{k=1}^{\infty} \varphi\left(3^{k+1} Q\right)^{-N}\left|3^{k+1} 
Q\right|^{-\frac{m}{q'} }
\prod_{i=1}^{m} \left ( \int_{3^{k+1} 
Q}\left|f_{i}\left(y_{i}\right)\right|^{p_i}w_i(y_i) d y_{i} \right 
)^{\frac{1}{p_i} } \nonumber\\
&  \quad \times\left ( \int_{3^{k+1} Q}w_i(y_i)^{1-\frac{p_i}{p_i-q'} } d 
y_{i} \right )^{\frac{p_i-q'}{p_i} \cdot \frac{1}{q'} } \nonumber\\
& \leq C\sum_{k=1}^{\infty} \frac{\varphi\left(3^{k+1} Q\right)^{\frac{m\theta 
}{q'} -N}}{{\left(\int_{3^{k+1} Q} v_{\vec{w}}\right)^{1 / p}}} 
\prod_{i=1}^{m}\left\|f_{i} \chi_{3^{k+1} Q}\right\|_{L^{p_i} (w_i)}.
\end{align}

Since $b_j(y_j)-b_{j,Q}=(b_j(y_j)-b_{j,3^{k+1}Q})+(b_{j,3^{k+1}Q}-b_{j,Q})$, the following estimate can be obtained similar to \eqref{4.4}.
\begin{align*}
&|T(f_{1}^{\infty },\ldots ,(b_j-b_{j,Q})f_{j}^{\infty } 
,\ldots ,f_{\xi }^{\infty},f_{\xi +1}^{0},\ldots ,f_{m}^{0})(x)|\\
&\le C\sum_{k=1}^{\infty} \frac{\varphi\left(3^{k+1} 
Q\right)^{\frac{m\theta }{q'} -N}\cdot 
|b_{j,3^{k+1}Q}-b_{j,Q}|}{{\left(\int_{3^{k+1} Q} v_{\vec{w}}\right)^{1 / p}}} 
\prod_{i=1}^{m}\left\|f_{i} \chi_{3^{k+1}Q}\right\|_{L^{p_i} (w_i)}\\
& \quad +C \sum_{k=1}^{\infty} \frac{1}{\left | 3^{k+1}Q \right |^{m}  
	\varphi\left(3^{k+1} Q\right)^{N}} \int_{\left(3^{k+1} Q\right)^{m}} 
\Big(\prod_{i=1,i\neq 
j}^{m}|f_{i}(y_{i})|\Big)\Big|f_{j}(y_{j})(b_j(y_j)-b_{j,3^{k+1}Q})\Big| d 
\vec{y}.
\end{align*}

Using H\"{o}lder's inequality, we deduce that
\begin{align*}
& \sum_{k=1}^{\infty} \left | 3^{k+1}Q \right |^{-m}  
\varphi\left(3^{k+1} Q\right)^{-N} \int_{\left(3^{k+1} Q\right)^{m}} 
\Big(\prod_{i=1,i\neq 
j}^{m}|f_{i}(y_{i})|\Big)\Big|f_{j}(y_{j})(b_j(y_j)-b_{j,3^{k+1}Q})\Big| d 
\vec{y} 
\\
&\le C \sum_{k=1}^{\infty} \left | 3^{k+1}Q \right |^{-m/q'}  
\varphi\left(3^{k+1} Q\right)^{-N/q'} \left ( \prod_{i=1,i\neq 
j}^{m}\int_{3^{k+1} Q}|f_{i}(y_{i})|^{q'}dy_i \right ) ^{\frac{1}{q'} } \\
& \quad \times \left ( \int_{3^{k+1} Q} 
|f_{j}(y_{j})(b_j(y_j)-b_{j,3^{k+1}Q})|^{q'} d {y_j} \right ) ^{\frac{1}{q'} } 
\\
&\le C \sum_{k=1}^{\infty} \left | 3^{k+1}Q \right |^{-m/q'}  
\varphi\left(3^{k+1} Q\right)^{-N/q'}\prod_{i=1,i\neq 
j}^{m}\Bigg[\left ( \int_{3^{k+1} Q} |f_{i}(y_{i})|^{p_i}w_i(y_i)dy_i \right ) 
^{\frac{1}{p_i} }\\
& \quad \times \left ( \int_{3^{k+1} Q}w_i(y_i)^{\frac{-q'}{p_i-q'} }dy_i 
\right ) ^{\frac{p_i-q'}{p_iq'} }\Bigg]\left ( \int_{3^{k+1} Q} 
|f_{j}(y_{j})|^{p_j}w_j(y_j)dy_j \right )^{\frac{1}{p_j} } \\
& \quad \times \left ( \int_{3^{k+1} Q} 
|b_j(y_j)-b_{j,3^{k+1}Q}|^{\frac{p_jq'}{p_j-q'} } w_j(y_j)^{\frac{-q'}{p_j-q'} 
}d {y_j} \right ) ^{\frac{p_j-q'}{p_jq'} }.
\end{align*}
Since $ \vec w\in A_{\vec p/q'}^{\theta  }(\varphi )$, we have
\begin{align*}
&\prod_{i=1,i\neq j}^{m} \left ( \int_{3^{k+1} Q}w_i(y_i)^{\frac{-q'}{p_i-q'} 
}dy_i \right ) ^{\frac{p_i-q'}{p_iq'} }\\
& \quad =\prod_{i=1}^{m} \left ( \int_{3^{k+1} 
Q}w_i(y_i)^{\frac{-q'}{p_i-q'} }dy_i 
\right ) ^{\frac{p_i-q'}{p_iq'} }\left ( \int_{3^{k+1} 
Q}w_j(y_j)^{\frac{-q'}{p_j-q'} }dy_j 
\right ) ^{-\frac{p_j-q'}{p_jq'} }\\
& \quad \le C\varphi\left(3^{k+1} Q\right)^{\frac{m\theta 
	}{q'} }\left | 3^{k+1}Q \right |^{\frac{m}{q'} }{\left(\int_{3^{k+1} Q} 
	v_{\vec w }\right)^{-\frac{1}{p} }}\left ( \int_{3^{k+1} 
	Q}w_j(y_j)^{\frac{-q'}{p_j-q'} }dy_j 
\right ) ^{-\frac{p_j-q'}{p_jq'} }.
\end{align*}
By Lemma \ref{Lemma3.9},  it follows that
\begin{align*}
&\left ( \int_{3^{k+1} Q} 
|b_j(y_j)-b_{j,3^{k+1}Q}|^{\frac{p_jq'}{p_j-q'} } w_j(y_j)^{\frac{-q'}{p_j-q'} 
}d {y_j} \right ) ^{\frac{p_j-q'}{p_jq'} }\\
& =\left ( \int_{3^{k+1} 
	Q}w_j(y_j)^{\frac{-q'}{p_j-q'} }dy_j 
\right ) ^{\frac{p_j-q'}{p_jq'} }\Bigg (\frac{1}{\Big ( \int_{3^{k+1} 
		Q}w_j(y_j)^{\frac{-q'}{p_j-q'} }dy_j 
	\Big )}  \\
& \quad \times \int_{3^{k+1} Q} 
|b_j(y_j)-b_{j,3^{k+1}Q}|^{\frac{p_jq'}{p_j-q'} } w_j(y_j)^{\frac{-q'}{p_j-q'} 
}d {y_j} \Bigg ) ^{\frac{p_j-q'}{p_jq'} } \\
& \le C \left ( \int_{3^{k+1} 
	Q}w_j(y_j)^{\frac{-q'}{p_j-q'} }dy_j 
\right ) ^{\frac{p_j-q'}{p_jq'} }\varphi \left ( 3^{k+1} 
Q \right ) ^{\frac{\tilde{s}(p_j-q')}{p_jq'} }||b_j||_{BMO_{\tilde{\theta}} 
	(\varphi ) }.
\end{align*}
Combining the above estimates, note that $ \varphi\left(3^{k+1} 
Q\right)^{-N}\le \varphi\left(3^{k+1} Q\right)^{-\frac{N}{q'} } $, then
\begin{align}\label{4.5} 
& \sum_{k=1}^{\infty} \left | 3^{k+1}Q \right |^{-m}  
\varphi\left(3^{k+1} Q\right)^{-N} \int_{\left(3^{k+1} Q\right)^{m}} 
\Big(\prod_{i=1,i\neq 
	j}^{m}|f_{i}(y_{i})|\Big)\Big|f_{j}(y_{j})(b_j(y_j)-b_{j,3^{k+1}Q})\Big| d 
\vec{y} 
\nonumber\\
&\le C ||b_j||_{BMO_{\tilde{\theta}} (\varphi ) }\sum_{k=1}^{\infty} 
\varphi\left(3^{k+1} 
Q\right)^{\frac{m\theta }{q'} -\frac{N}{q'} }\frac{\varphi \left ( 3^{k+1} 
	Q \right ) ^{\frac{\tilde{s}(p_j-q')}{p_jq'} }}{{\left(\int_{3^{k+1} Q} 
		v_{\vec{w}}\right)^{1 / p}}} \prod_{i=1}^{m}\left\|f_{i} 
\chi_{3^{k+1}Q}\right\|_{L^{p_i} (w_i)}.
\end{align}
Hence, we obtain
\begin{align*}
|T_{b_j}^{j} (\vec f)(x)|
&\le |T_{b_j}^{j} (\vec f\chi _{Q^\ast })(x)|+ 
C|b_j(x)-b_{j,Q}|\sum_{k=1}^{\infty} 
\frac{\varphi\left(3^{k+1} Q\right)^{\frac{m\theta }{q'} 
-N}}{{\left(\int_{3^{k+1} Q} v_{\vec{w}}\right)^{1 / p}}} 
\prod_{i=1}^{m}\left\|f_{i} \chi_{3^{k+1}Q}\right\|_{L^{p_i} (w_i)}\\
& \quad +C\sum_{k=1}^{\infty} \frac{\varphi\left(3^{k+1} 
Q\right)^{\frac{m\theta }{q'} -N}\cdot 
|b_{j,3^{k+1}Q}-b_{j,Q}|}{{\left(\int_{3^{k+1} 
Q} v_{\vec{w}}\right)^{1 / p}}} 
\prod_{i=1}^{m}\left\|f_{i} \chi_{3^{k+1}Q}\right\|_{L^{p_i} (w_i)}\\
& \quad +C ||b_j||_{BMO_{\tilde{\theta}} (\varphi )}\sum_{k=1}^{\infty} 
\varphi\left(3^{k+1} 
Q\right)^{\frac{m\theta }{q'} -\frac{N}{q'} }\frac{\varphi (3^{k+1} 
Q)^{\frac{\tilde{s}(p_j-q')}{p_jq'} }}{{\left(\int_{3^{k+1} Q} 
v_{\vec{w}}\right)^{1 / p}}} \prod_{i=1}^{m}\left\|f_{i} 
\chi_{3^{k+1}Q}\right\|_{L^{p_i} (w_i)}\\
&:=G_1+G_2+G_3+G_4.
\end{align*}
Taking $ L^{p}(v_{\vec w}) $ norm on the cube $ Q(z,r) $ of $ G_1 $, using 
Theorem \ref{Theorem2.1}, it follows that
\begin{align*}
||G_{1}\chi _{Q(z,r)} ||_{L^{p}(v_{\vec w}) }\le C||b_j||_{BMO_{\tilde{\theta}} 
(\varphi )}\prod_{i=1}^{m} ||f_i\chi 
_{3Q}||_{L^{p_i}({w_i})}.
\end{align*}

Let us estimate the term $ G_2 $, applying Lemma \ref{Lemma3.9}, we have
\begin{align*}
\Big( \int_{Q}  | b_j(x)-b_{j,Q} |^pv_{\vec w}(x) dx  \Big) ^{\frac{1}{p} } 
&=v_{\vec w}(Q)^{\frac{1}{p} } \Big( \frac{1}{v_{\vec w}(Q)}\int_{Q}  | 
b_j(x)-b_{j,Q} |^{p} v_{\vec w}(x) dx \Big ) ^{\frac{1}{p} } \\
&\le Cv_{\vec w}(Q)^{\frac{1}{p} }\varphi (Q)^{\frac{\tilde{s}}{p} 
}||b_j||_{BMO_{\tilde{\theta}}(\varphi )} \\
& \le C v_{\vec w}(Q)^{\frac{1}{p} }\varphi (3^{k+1} Q)^{\frac{\tilde{s}}{p} 
}|| b_j||_{BMO_{\tilde{\theta}}(\varphi ) }.
\end{align*}
Taking $ L^{p}(v_{\vec w}) $ norm on the cube 
$ Q(z,r) $ of $ G_2 $,  we get
\begin{align*}
&||G_{2}\chi _{Q(z,r)} ||_{L^{p}(v_{\vec w}) }\\
&\le C ||b_j||_{BMO_{\tilde{\theta}} (\varphi )}\sum_{k=1}^{\infty} 
\varphi\left(3^{k+1} Q\right)^{\frac{m\theta }{q'} 
	-\frac{N}{q'}+\frac{\tilde{s}}{p}}\frac{(\int_{Q} 
	v_{\vec{w}})^{1/p}}{{\left(\int_{3^{k+1} Q} 
		v_{\vec{w}}\right)^{1 /p}}} \prod_{i=1}^{m}\left\|f_{i} 
\chi_{3^{k+1}Q}\right\|_{L^{p_i} (w_i)}.
\end{align*}

For the term $ G_3 $, using Lemma \ref{Lemma3.1}, we obtain
\begin{align*}
\left | b_{j,3^{k+1}Q}-b_{j,Q} \right |
\le \frac{1}{|3^{k+1}Q|} \int_{3^{k+1}Q}  |b_{j}(x)-b_{j,Q}|dx 
\le Ck  \varphi (3^{k+1} Q)^{\tilde{\theta}}\| 
b_j\|_{BMO_{\tilde{\theta}}(\varphi ) }.
\end{align*}
Taking $ L^{p}(v_{\vec w}) $ norm on the cube 
$ Q(z,r) $ of $ G_3 $, we conclude that
\begin{align*}
&||G_{3}\chi _{Q(z,r)} ||_{L^{p}(v_{\vec w}) }\\
&\le C ||b_j||_{BMO_{\tilde{\theta}} (\varphi )}\sum_{k=1}^{\infty}k 
\varphi\left(3^{k+1} Q\right)^{\frac{m\theta }{q'} 
	-\frac{N}{q'}+\tilde{\theta} }\frac{(\int_{Q} 
	v_{\vec{w}})^{1/p}}{{\left(\int_{3^{k+1} Q} 
		v_{\vec{w}}\right)^{1 /p}}} \prod_{i=1}^{m}\left\|f_{i} 
\chi_{3^{k+1}Q}\right\|_{L^{p_i} (w_i)}.
\end{align*}
Taking $ L^{p}(v_{\vec w}) $ norm on the cube 
$ Q(z,r) $ of $ G_4 $,  we deduce that
\begin{align*}
&||G_{4}\chi _{Q(z,r)} ||_{L^{p}(v_{\vec w}) }\\
&\le C 
||b_j||_{BMO_{\tilde{\theta}} (\varphi ) }\sum_{k=1}^{\infty} 
\varphi\left(3^{k+1} Q\right)^{\frac{m\theta }{q'} 
	-\frac{N}{q'}+\frac{\tilde{s}(p_j-q')}{p_jq'}}\frac{(\int_{Q} 
	v_{\vec{w}})^{1/p}}{{\left(\int_{3^{k+1} Q} 
		v_{\vec{w}}\right)^{1 /p}}} \prod_{i=1}^{m}\left\|f_{i} 
\chi_{3^{k+1}Q}\right\|_{L^{p_i} (w_i)}.
\end{align*}
Combining the above estimates, we obtain
\begin{align*}
&||T_{b_j}^{j} (\vec f)\chi _{Q(z,r)}||_{L^{p}(v_{\vec w}) }\\
&\le C||b_j||_{BMO_{\tilde{\theta}} (\varphi )}\prod_{i=1}^{m} ||f_i\chi 
_{3Q}||_{L^{p_i}({w_i})}\\
& \quad +C ||b_j||_{BMO_{\tilde{\theta}} (\varphi )}\sum_{k=1}^{\infty} 
\varphi\left(3^{k+1} Q\right)^{\frac{m\theta }{q'} 
-\frac{N}{q'}+\frac{\tilde{s}}{p}}\frac{(\int_{Q} 
v_{\vec{w}})^{1/p}}{{\left(\int_{3^{k+1} Q} 
	v_{\vec{w}}\right)^{1 /p}}} \prod_{i=1}^{m}\left\|f_{i} 
\chi_{3^{k+1}Q}\right\|_{L^{p_i} (w_i)} \\
& \quad +C ||b_j||_{BMO_{\tilde{\theta}} (\varphi )}\sum_{k=1}^{\infty}k 
\varphi\left(3^{k+1} Q\right)^{\frac{m\theta }{q'} 
-\frac{N}{q'}+\tilde{\theta} }\frac{(\int_{Q} 
v_{\vec{w}})^{1/p}}{{\left(\int_{3^{k+1} Q} 
v_{\vec{w}}\right)^{1 /p}}} \prod_{i=1}^{m}\left\|f_{i} 
\chi_{3^{k+1}Q}\right\|_{L^{p_i} (w_i)} \\
& \quad + C ||b_j||_{BMO_{\tilde{\theta}} (\varphi ) }\sum_{k=1}^{\infty} 
\varphi\left(3^{k+1} Q\right)^{\frac{m\theta }{q'} 
-\frac{N}{q'}+\frac{\tilde{s}(p_j-q')}{p_jq'}}\frac{(\int_{Q} 
v_{\vec{w}})^{1/p}}{{\left(\int_{3^{k+1} Q} 
v_{\vec{w}}\right)^{1 /p}}} \prod_{i=1}^{m}\left\|f_{i} 
\chi_{3^{k+1}Q}\right\|_{L^{p_i} (w_i)}\\
&:=G_{1}^{'} +G_{2}^{'} +G_{3}^{'} +G_{4}^{'}.
\end{align*}
Using Lemma \ref{Lemma3.3} and Lemma \ref{Lemma3.7}, we get
\begin{align*}
\frac{(\int_{Q} v_{\vec{w}})^{\frac{1}{p} }}{{\left(\int_{3^{k+1} Q} 
v_{\vec{w}}\right)^{\frac{1}{p}}}} 
=\frac{\big(v_{\vec{w}}(Q)\big) ^{\frac{1}{p} 
}}{{\big(v_{\vec{w}}(3^{k+1}Q)\big)^{\frac{1}{p}}}} 
\le C\frac{1}{3^{nk\delta /p}} \varphi (3^{k+1} Q)^{\frac{\eta }{p} }.
\end{align*}
Using Lemma \ref{Lemma3.3} and Lemma \ref{Lemma3.8}, then $ v_{\vec w}\in 
A_{mp/q'}^{\theta } (\varphi ) $, and it follows that
\begin{align*}
v_{\vec w}(Q)^{-\lambda }\le C\varphi (3^{k+1}Q)^{\frac{mp\theta \lambda }{q'}  
}v_{\vec w}(3^{k+1}Q)^{-\lambda }.
\end{align*}
Multiplying both sides of $ G_{1}^{'} $ by  $ \varphi(Q)^{\alpha} 
v_{\vec{w}}(Q)^{-\lambda} $, using Lemma \ref{Lemma3.3} and Lemma 
\ref{Lemma3.8}, then
\begin{align*}
G_{1}^{'}\varphi(Q)^{\alpha} v_{\vec{w}}(Q)^{-\lambda} 
&\le C\varphi (Q)^{\alpha }\varphi (3Q)^{\frac{mp\theta \lambda }{q'}  
}v_{\vec{w}}\left(3 Q\right)^{-\lambda}||b_j||_{BMO_{\tilde{\theta}} (\varphi 
	)} \prod_{i=1}^{m}\left\|f_{i} 
\chi_{3Q}\right\|_{L^{p_i} (w_i)} \\
&\le  C \varphi (3Q)^{\alpha +\frac{mp \theta\lambda}{q'}  }
v_{\vec{w}}\left(3 Q\right)^{-\lambda} ||b_j||_{BMO_{\tilde{\theta}} (\varphi 
	)} \prod_{i=1}^{m}\left\|f_{i} 
\chi_{3Q}\right\|_{L^{p_i} (w_i)}.
\end{align*}
Note that $ \alpha \in (-\infty  ,\infty )  $, then
$$\varphi (Q)^{\alpha }\le \varphi (3^{k+1}Q)^{|\alpha| },$$
and thus
\begin{align*}
\frac{\varphi (Q)^{\alpha }}{ \varphi (3^{k+1}Q)^{\frac{N}{2q'}  }} \le 
\frac{\varphi (3^{k+1}Q)^{|\alpha| }}{\varphi (3^{k+1}Q)^{\frac{N}{2q'}  }}
=\frac{\varphi (3^{k+1}Q)^{\alpha }}{\varphi (3^{k+1}Q)^{\frac{N}{2q'} 
-|\alpha|+\alpha  }}.
\end{align*}
Taking $ N\ge N_{11}:=2q'(|\alpha |-\alpha ) $, we have
$$\frac{\varphi (Q)^{\alpha }}{ \varphi (3^{k+1}Q)^{\frac{N}{2q'}  }} \le C 
\varphi (3^{k+1}Q) ^{\alpha}.$$

Multiplying both sides of $ G_{2}^{'} $ by  $ \varphi(Q)^{\alpha} 
v_{\vec{w}}(Q)^{-\lambda} $, using Lemma \ref{Lemma3.3}, Lemma \ref{Lemma3.7}  
and Lemma \ref{Lemma3.8}, taking $N\ge \mathop{\max}\left \{ N_{11}, 
N_{12}:=2m\theta +2(\eta+ \tilde{s})q'/p \right \} $, we conclude
\begin{align*}
&G_{2}^{'} \varphi(Q)^{\alpha} v_{\vec{w}}(Q)^{-\lambda}\\
&\le C||b_j||_{BMO_{\tilde{\theta}} (\varphi 
	)}\sum_{k=1}^{\infty }\frac{1}{3^{nk\delta /p}}\varphi(3^{k+1}Q)
^{\frac{m\theta }{q'} -\frac{N}{q'}+\frac{\tilde{s}}{p} +\frac{\eta }{p}  
}\varphi (Q)^{\alpha }\varphi (3^{k+1}Q)^{\frac{mp \theta\lambda}{q'} } 
v_{\vec{w}}\left(3^{k+1} Q\right)^{-\lambda} \\
& \quad \times \prod_{i=1}^{m}\left\|f_{i} 
\chi_{3^{k+1}Q}\right\|_{L^{p_i} (w_i)} \\
&= C||b_j||_{BMO_{\tilde{\theta}} (\varphi 
	)}\sum_{k=1}^{\infty }\frac{1}{3^{nk\delta /p}}\varphi(3^{k+1}Q)
^{\frac{m\theta }{q'} -\frac{N}{2q'}+\frac{\tilde{s}}{p} +\frac{\eta }{p}  
}\frac{\varphi (Q)^{\alpha }}{\varphi (3^{k+1}Q)^{\frac{N}{2q'}  }} \varphi 
(3^{k+1}Q)^{\frac{mp \theta\lambda}{q'} } \\
& \quad \times v_{\vec{w}}\left(3^{k+1} 
Q\right)^{-\lambda}\prod_{i=1}^{m}\left\|f_{i} 
\chi_{3^{k+1}Q}\right\|_{L^{p_i} (w_i)} \\
&\le C||b_j||_{BMO_{\tilde{\theta}} (\varphi 
	)}\sum_{k=1}^{\infty }\frac{1}{3^{nk\delta /p}}\varphi (3^{k+1}Q)^{\alpha 
	+\frac{mp \theta\lambda}{q'} }
v_{\vec{w}}\left(3^{k+1} Q\right)^{-\lambda}\prod_{i=1}^{m}\left\|f_{i} 
\chi_{3^{k+1}Q}\right\|_{L^{p_i} (w_i)}.
\end{align*}

Using a method similar to the one described above, multiplying both sides of $ 
G_{3}^{'} $ by  $ \varphi(Q)^{\alpha} v_{\vec{w}}(Q)^{-\lambda} $, using Lemma 
\ref{Lemma3.3}, Lemma \ref{Lemma3.7},  and Lemma \ref{Lemma3.8}, taking $$ N\ge 
\mathop{\max}\left \{ N_{11}, N_{13}:=2m\theta +2( \tilde{\theta }+\eta /p )q' 
\right \},$$ we get
\begin{align*}
&G_{3}^{'} \varphi(Q)^{\alpha} v_{\vec{w}}(Q)^{-\lambda}\\
&\le C||b_j||_{BMO_{\tilde{\theta}} (\varphi 
	)}\sum_{k=1}^{\infty } \frac{k}{3^{nk\delta /p}} \varphi (3^{k+1}Q)^{\alpha 
	+\frac{mp 
		\theta\lambda}{q'} }
v_{\vec{w}}\left(3^{k+1} Q\right)^{-\lambda}\prod_{i=1}^{m}\left\|f_{i} 
\chi_{3^{k+1}Q}\right\|_{L^{p_i} (w_i)}.
\end{align*}

Multiplying both sides of $ G_{4}^{'} $ by  $ \varphi(Q)^{\alpha} 
v_{\vec{w}}(Q)^{-\lambda} $, using Lemma \ref{Lemma3.3}, Lemma \ref{Lemma3.7},  
and Lemma \ref{Lemma3.8}, taking $ N\ge \mathop{\max}\left \{ N_{11}, 
N_{14}:=2m\theta +2\tilde{s}+2\eta q' /p \right \} $, 
we obtain
\begin{align*}
&G_{4}^{'} \varphi(Q)^{\alpha} v_{\vec{w}}(Q)^{-\lambda}\\
&\le C||b_j||_{BMO_{\tilde{\theta}} (\varphi 
	)}\sum_{k=1}^{\infty } \frac{1}{3^{nk\delta /p}} \varphi (3^{k+1}Q)^{\alpha 
	+\frac{mp 
		\theta\lambda}{q'}  }
v_{\vec{w}}\left(3^{k+1} Q\right)^{-\lambda}\prod_{i=1}^{m}\left\|f_{i} 
\chi_{3^{k+1}Q}\right\|_{L^{p_i} (w_i)}.
\end{align*}
Taking the above estimate together, let $N\ge \mathop{\max} \left \{ N_{11}, 
N_{12},N_{13},N_{14}\right \}$, we obtain
\begin{align}\label{4.6} 
&\varphi(Q)^{\alpha} v_{\vec{w}}(Q)^{-\lambda} ||T_{b_j}^{j} (\vec f)\chi 
_{Q(z,r)}||_{L^{p}(v_{\vec w}) } \nonumber\\ 
& \le C||b_j||_{BMO_{\tilde{\theta}} 
	(\varphi )}\sum_{k=0}^{\infty } \frac{k+1}{3^{nk\delta /p}} \varphi 
	(3^{k+1}Q)^{\alpha +\frac{mp 
		\theta\lambda}{q'}  }
v_{\vec{w}}\left(3^{k+1} Q\right)^{-\lambda}\prod_{i=1}^{m}\left\|f_{i} 
\chi_{3^{k+1}Q}\right\|_{L^{p_i} (w_i)}.
\end{align}
Taking $L^{l}(\mathbb{R}^{n})$ norm of both sides of \eqref{4.6}, we 
deduce that
\begin{align}\label{4.7} 
&\left\| \varphi(Q)^{\alpha} v_{\vec{w}}(Q)^{-\lambda} ||T_{b_j}^{j} (\vec 
f)\chi _{Q(z,r)}||_{L^{p}(v_{\vec w}) }   
\right\|_{L^{l}\left(\mathbb{R}^{n}\right)} \nonumber\\
& \leq C ||b_j||_{BMO_{\tilde{\theta}} (\varphi 
	) }\sum_{k=0}^{\infty} \frac{k+1}{3^{n k \delta/ p}} \nonumber\\
& \quad \times\left\|\varphi\left(3^{k+1} 
Q\right)^{\alpha+\frac{mp \theta\lambda}{q'}} 
v_{\vec{w}}\left(3^{k+1} Q\right)^{-\lambda  }\prod_{i=1}^{m}|| f_{i} \chi_{3^{k+1} 
	Q}||_{L^{p_{i}}\left(w_{i}\right)}\right\|_{L^{l }\left(\mathbb{R}^{n}\right)}.
\end{align}
Since $ \beta =\alpha +mp\theta \lambda /q'$, the proof of Theorem \ref{Theorem2.3} is finished.
\end{proof}
\subsection{Proof of Theorem 2.4}
\begin{proof} We only present the case $ m=2 $ and $ \theta _1=\theta 
_2=\tilde{\theta}$ for simplicity. In general, the calculation is the same, 
just more complicated. Let $ Q:=Q(z,r) $ and $ Q^\ast :=3Q $. We split each 
$f_j$ as $f_j=f_j^0+f_j^\infty$, where $f_j^0=f_j \chi_{Q^*}$ and $ 
f_j^\infty=f_j-f_j^0 $. We deduce that for $x\in Q(z,r)$,
\begin{align*}
|T_{\prod \vec b}(f_1,f_2)(x)|
\le |T_{\prod \vec b}(f_{1}^{0},f_{2}^{0})(x)|+\sum_{(\alpha_1,\alpha_2)\in 
	\mathscr{L}}|T_{\prod \vec b}(f_{1}^{\alpha_1},f_{2}^{\alpha_2})(x) |
:= \mathrm{I}_1+\sum_{(\alpha_1,\alpha_2)\in 
	\mathscr{L}}\mathrm{I}_2^{\alpha _1,\alpha _2},
\end{align*}
where  $\mathscr{L}:=\{(\alpha_1,\alpha_2)$: there is at least one 
$\alpha_j=\infty\}$, $ j=1,2$.

Taking $ L^{p} (\upsilon _{\vec \omega }) $ norm on the cube $ Q(z,r) $ of $ 
\mathrm{I}_1$, by Theorem \ref{Theorem2.2}, it follows that
\begin{align*}
\left \| T_{\prod \vec b}(f_{1}^{0},f_{2}^{0})\chi _{Q(z,r)} \right \| 
_{L^{p}(\nu _{\vec w })}\le C\prod_{j=1}^{2}\left \| b_j \right 
\|_{BMO_{\tilde{\theta}} (\varphi ) } 
\prod_{i=1}^{2}\left\|f_i\chi_{Q(z,3r)}\right\|_{L^{p_{i}}\left(w_{i}\right)}.
\end{align*}
Multiplying both sides of the above inequality by  $ \varphi(Q)^{\alpha} 
v_{\vec{w}}(Q)^{-\lambda} $, using Lemma \ref{Lemma3.3} and Lemma 
\ref{Lemma3.8}, we obtain
\begin{align*}
&\varphi(Q)^{\alpha} v_{\vec{w}}(Q)^{-\lambda}\left \| T_{\prod \vec 
b}(f_{1}^{0},f_{2}^{0})\chi _{Q(z,r)} \right \| _{L^{p}(v _{\vec w })}\\
& \quad \le C\varphi(Q)^{\alpha } 
v_{\vec{w}}(3Q)^{-\lambda}\varphi(3Q)^{\frac{2p\theta \lambda }{q'} }
\prod_{j=1}^{2}\left \| b_j \right 
\|_{BMO_{\tilde{\theta}} (\varphi ) }  
\prod_{i=1}^{2}\left\| f_i\chi_{Q(z,3r)}\right\|_{L^{p_{i}}\left(w_{i}\right)}\\
& \quad \le C\varphi(3Q)^{\alpha +\frac{2p\theta \lambda }{q'} } 
v_{\vec{w}}(3Q)^{-\lambda}
\prod_{j=1}^{2}\left \| b_j \right 
\|_{BMO_{\tilde{\theta}} (\varphi ) }  
\prod_{i=1}^{2}\left\| f_i\chi_{Q(z,3r)}\right\|_{L^{p_{i}}\left(w_{i}\right)}.
\end{align*}
Applying the same method of the formula \eqref{4.7}, we can estimate the $ 
L^{l}(\mathbb{R}^{n}) $ norm of the above formula.

Let us estimate the term $ \mathrm{I}_2^{\alpha _1,\alpha _2} $. Taking $ \lambda _{j}:=b_{j,Q}, 
j=1,2$, we split $ T_{\prod \vec b}(f_{1}^{\alpha_1},f_{2}^{\alpha_2}) $ 
into four terms.
\begin{align*}
&|T_{\prod \vec b}(f_{1}^{\alpha_1},f_{2}^{\alpha_2})(x)|\\
&\le|b_1(x)-\lambda _1||b_2(x)-\lambda _2||T
(f_{1}^{\alpha_1},f_{2}^{\alpha_2})(x)|+|b_1(x)-\lambda 
_1||T(f_{1}^{\alpha_1},(\lambda _2-b_2)f_{2}^{\alpha_2})(x)|\\
& \quad +|b_2(x)-\lambda _2||T((\lambda 
_1-b_1)f_{1}^{\alpha_1},f_{2}^{\alpha_2})(x)|+|T((b_1-\lambda 
_1)f_{1}^{\alpha_1},(b_2-\lambda _2)f_{2}^{\alpha_2})(x)|\\
&:=\mathrm{I}_{21}^{\alpha _1,\alpha _2}+\mathrm{I}_{22}^{\alpha _1,\alpha _2}+\mathrm{I}_{23}^{\alpha _1,\alpha _2}+\mathrm{I}_{24}^{\alpha _1,\alpha _2}.
\end{align*}
To estimate $\mathrm{I}_{21}^{\alpha _1,\alpha _2}$, the following estimate can be obtained similar 
to \eqref{4.4}.
\begin{align*}
\left |  T\left(f_{1}^{\alpha_{1}}, f_{2}^{\alpha_{2}}\right)(x) \right |
\leq C\sum_{k=1}^{\infty} \frac{\varphi\left(3^{k+1} Q\right)^{\frac{2\theta 
		}{q'} -\frac{N}{q'} }}{{\left(\int_{3^{k+1} Q} v_{\vec{w}}\right)^{1 / 
		p}}} 
\prod_{i=1}^{2}\left\|f_{i} \chi_{3^{k+1} Q}\right\|_{L^{p_i} (w_i)}.
\end{align*}
Using H\"{o}lder's inequality and Lemma \ref{Lemma3.9}, we can deduce
\begin{align*}
&\left ( \int_{Q}|(b_1(x)-\lambda _1)(b_2(x)-\lambda _2)|^p v_{\vec w}(x) 
dx \right ) ^\frac{1}{p}\\
&\le \left ( \int_{Q}|b_1(x)-\lambda _1|^{2p} v_{\vec w}(x)
dx \right ) ^\frac{1}{2p}
\left ( \int_{Q}|b_2(x)-\lambda _2|^{2p} v_{\vec w}(x) dx \right 
) ^\frac{1}{2p} \\
&\le C v_{\vec w}(Q)^{\frac{1}{p} }\varphi (Q)^{\frac{\tilde{s}}{p} 
}\prod_{j=1}^{2}\left \| b_j \right \|_{BMO_{\tilde{\theta}} (\varphi ) }.
\end{align*}
Taking the above estimate together, we get
\begin{align*}
&\left ( \int_{Q}|\mathrm{I}_{21}^{\alpha _1,\alpha _2}|^p v_{\vec w}(x) dx \right ) ^\frac{1}{p} \\
&\le \left ( \int_{Q}|(b_1(x)-\lambda _1)(b_2(x)-\lambda _2)|^p v_{\vec 
w}(x) 
dx \right ) ^\frac{1}{p}
\mathop{\sup}\limits_{ x\in {\rm Q}}\left | 
T(f_{1}^{\alpha_1},f_{2}^{\alpha_2})(x) \right |\\
&\le Cv_{\vec w}(Q)^{\frac{1}{p} }\varphi (Q)^{\frac{\tilde{s}}{p} }
\prod_{j=1}^{2}\left \| b_j \right \|_{BMO_{\tilde{\theta}} 
	(\varphi ) }\sum_{k=1}^{\infty} \frac{\varphi\left(3^{k+1} 
	Q\right)^{\frac{2\theta 
		}{q'} -\frac{N}{q'} }}{{\left(\int_{3^{k+1} Q} v_{\vec{w}}\right)^{1 / 
		p}}}
 \prod_{i=1}^{2}\left\|f_{i} 
\chi_{3^{k+1} Q}\right\|_{L^{p_i} (w_i)}.
\end{align*}

Multiplying both sides of the above formula by $ \varphi(Q)^{\alpha} 
v_{\vec{w}}(Q)^{-\lambda}$, using Lemma \ref{Lemma3.3}, Lemma \ref{Lemma3.7} 
and  Lemma \ref{Lemma3.8}, taking $ N\ge \mathop{\max} \left \{ 2q'(|\alpha 
|-\alpha ),2q'(\tilde{s}+\eta)/p+4\theta\right \}$, we obtain
\begin{align}\label{4.8} 
&\varphi(Q)^{\alpha} v_{\vec{w}}(Q)^{-\lambda}||\mathrm{I}_{21}^{\alpha _1,\alpha _2} \chi 
_{Q(z,r)}||_{L^{p}(v_{\vec w})} \nonumber\\
& \quad \le C\prod_{j=1}^{2}\left \| b_j \right 
\|_{BMO_{\tilde{\theta}} (\varphi )}\sum_{k=1}^{\infty 
}\frac{\varphi(3^{k+1}Q)^{\alpha+\frac{2p\theta 
			\lambda}{q'}  }}{3^{nk\delta /p}}  
v_{\vec{w}}(3^{k+1}Q)^{-\lambda} \prod_{i=1}^{2}\left\| 
f_i\chi_{3^{k+1}Q}\right\|_{L^{p_{i}}\left(w_{i}\right)}.
\end{align}
 
Since $ \mathrm{I}_{22}^{\alpha _1,\alpha _2} $ and $\mathrm{I}_{23}^{\alpha _1,\alpha _2}$ are symmetrical, we only estimate $ \mathrm{I}_{22}^{\alpha _1,\alpha _2} $. The following estimates can be obtained similar 
to the formula \eqref{4.5}. We deduce that
\begin{align*}
&\left |  T\left(f_{1}^{\alpha_{1}}, (\lambda 
_2-b_2)f_{2}^{\alpha_{2}}\right)(x) \right | \\
&\leq C \sum_{k=1}^{\infty} \frac{\varphi\left(3^{k+1} 
	Q\right)^{\frac{2\theta }{q'} -\frac{N}{q'} 
	}|b_{2,3^{k+1}Q}-b_{2,Q}|}{{{\left(\int_{3^{k+1} Q} 
			v_{\vec{w}}\right)^{1 / p}}}}\prod_{i=1}^{2}\left\|f_{i} 
			\chi_{3^{k+1}Q}\right\|_{L^{p_i} 
	(w_i)} \\
&\quad + C ||b_2||_{BMO_{\tilde{\theta}} (\varphi )}\sum_{k=1}^{\infty} 
\varphi\left(3^{k+1} 
Q\right)^{\frac{2\theta }{q'} -\frac{N}{q'} }\frac{\varphi \left ( 3^{k+1} 
Q \right ) ^{\frac{\tilde{s}(p_2-q')}{p_2q'} }}{{\left(\int_{3^{k+1} Q} 
v_{\vec{w}}\right)^{1 / p}}} 
\prod_{i=1}^{2}\left\|f_{i} \chi_{3^{k+1}Q}\right\|_{L^{p_i} 
(w_i)}. 
\end{align*}
Applying Lemma \ref{Lemma3.9}, we have
\begin{align*}
\left ( \int_{Q}|b_1(x)-\lambda _1|^p v_{\vec w}(x) dx \right ) ^\frac{1}{p}
\le Cv_{\vec w}(Q)^{\frac{1}{p} }\varphi (Q)^{\frac{\tilde{s}}{p} }\left \| 
b_1 \right \|_{BMO_{\tilde{\theta}} (\varphi )}.
\end{align*}
Taking the above estimate together, we get
\begin{align*}
&\left ( \int_{Q}|\mathrm{I}_{22}^{\alpha _1,\alpha _2}|^p v_{\vec w}(x) dx \right ) ^\frac{1}{p} \\
&\le \left ( \int_{Q}|b_1(x)-\lambda _1|^p v_{\vec w}(x) 
dx \right ) ^\frac{1}{p}\mathop{\sup}\limits_{ x\in {\rm Q}}\left | 
T(f_{1}^{\alpha_1},(\lambda _2-b_2)f_{2}^{\alpha_2})(x) \right |\\
&\le C \left \| b_1 \right \|_{BMO_{\tilde{\theta}} 
	(\varphi )}\sum_{k=1}^{\infty}\varphi (Q)^{\frac{\tilde{s}}{p} 
} \varphi\left(3^{k+1} 
Q\right)^{\frac{2\theta }{q'} -\frac{N}{q'} 
}|b_{2,3^{k+1}Q}-b_{2,Q}| \\
& \quad \times \frac{v_{\vec w}(Q)^{\frac{1}{p} } }{v_{\vec 
		w}(3^{k+1}Q)^{\frac{1}{p} }}\prod_{i=1}^{2}\left\|f_{i} 
\chi_{3^{k+1}Q}\right\|_{L^{p_i} 
	(w_i)} \\
& \quad+C\prod_{j=1}^{2}\left \| b_j \right \|_{BMO_{\tilde{\theta}} 
	(\varphi )}\sum_{k=1}^{\infty}\varphi (Q)^{\frac{\tilde{s}}{p} 
	}\varphi\left(3^{k+1} 
Q\right)^{\frac{2\theta }{q'}-\frac{N}{q'}+\frac{\tilde{s}(p_{2}-q')}{p_{2}q'}  
}\\
& \quad \times \frac{v_{\vec w}(Q)^{\frac{1}{p} } }{v_{\vec 
w}(3^{k+1}Q)^{\frac{1}{p} }}\prod_{i=1}^{2}\left\|f_{i} 
\chi_{3^{k+1} Q}\right\|_{L^{p_i} (w_i)}.
\end{align*}

Multiplying both sides of the above formula by $ \varphi(Q)^{\alpha} 
v_{\vec{w}}(Q)^{-\lambda}$, using Lemma \ref{Lemma3.3}, Lemma \ref{Lemma3.7} 
and  Lemma \ref{Lemma3.8}, taking $$N\ge \mathop{\max} \left \{ 2q'(|\alpha 
|-\alpha ),2(\tilde{s}+\eta )q'/p+4\theta +2\tilde{s},
2(\tilde{s}+\eta )q'/p+4\theta +2\tilde{\theta }q'\right \},$$ we obtain
\begin{align*}
&\varphi(Q)^{\alpha} v_{\vec{w}}(Q)^{-\lambda}||\mathrm{I}_{22}^{\alpha _1,\alpha _2} \chi 
_{Q(z,r)}||_{L^{p}(v_{\vec w})} \\
& \quad \le C\prod_{j=1}^{2}\left \| b_j \right 
\|_{BMO_{\tilde{\theta}} (\varphi )} \sum_{k=1}^{\infty }\frac{k}{3^{nk\delta 
/p}}  \varphi(3^{k+1}Q)^{\alpha+\frac{2p\theta \lambda}{q'}  } 
v_{\vec{w}}(3^{k+1}Q)^{-\lambda}\prod_{i=1}^{2}\left\| 
f_i\chi_{3^{k+1}Q}\right\|_{L^{p_{i}}\left(w_{i}\right)}.
\end{align*}

For the term $ \mathrm{I}_{24}^{\alpha _1,\alpha _2} $, $|T((b_1-\lambda _1)f_{1}^{\alpha_{1}}, 
(b_2-\lambda _2)f_{2}^{\alpha_{2}})(x) |$ is estimated similarly to the formula 
\eqref{4.4} and \eqref{4.5}. Since 
\begin{align*}
&(b_1(y_1)-\lambda _1)(b_2(y_2)-\lambda _2)\\
&=(b_1(y_1)-b_{1,3^{k+1}Q})(b_2(y_2)-b_{2,3^{k+1}Q})
+(b_1(y_1)-b_{1,3^{k+1}Q})(b_{2,3^{k+1}Q}-b_{2,Q})\\
&\quad +(b_{1,3^{k+1}Q}-b_{1,Q})(b_2(y_2)-b_{2,3^{k+1}Q})
+(b_{1,3^{k+1}Q}-b_{1,Q})(b_{2,3^{k+1}Q}-b_{2,Q}),
\end{align*}
 we deduce that
\begin{align*}
&|  T((b_1-\lambda _1)f_{1}^{\alpha_{1}}, (b_2-\lambda _2)f_{2}^
{\alpha_{2}})(x) |\\
& \leq C \sum_{k=1}^{\infty} \frac{1}{\left | 3^{k+1}Q \right |^{2}  
	\varphi\left(3^{k+1} Q\right)^{N}}
\int_{\left(3^{k+1} Q\right)^{2}} \prod_{j=1}^{2} 
\left|(b_j(y_j)-b_{j,3^{k+1} Q})f_{j}\left(y_{j}\right)\right|d \vec{y} \\
& \quad+ C \sum_{k=1}^{\infty} \frac{|b_{2,3^{k+1}Q}-b_{2,Q}|}{\left | 3^{k+1}Q 
\right |^{2}  
	\varphi\left(3^{k+1} Q\right)^{N}}\int_{\left(3^{k+1} Q\right)^{2}} 
\left|(b_1(y_1)-b_{1,3^{k+1} Q})f_{1}\left(y_{1}\right)\right|\left | 
f_{2}\left(y_{2}\right) \right | d \vec{y} \\
& \quad+C \sum_{k=1}^{\infty} \frac{|b_{1,3^{k+1}Q}-b_{1,Q}|}{\left | 3^{k+1}Q 
\right |^{2}  
	\varphi\left(3^{k+1} Q\right)^{N}}\int_{\left(3^{k+1} Q\right)^{2}} 
\left|(b_2(y_2)-b_{2,3^{k+1} Q})f_{2}\left(y_{2}\right)\right|\left | 
f_{1}\left(y_{1}\right) \right | d \vec{y} \\
& \quad+ C \sum_{k=1}^{\infty} \frac{\varphi\left(3^{k+1} 
	Q\right)^{\frac{2\theta }{q'} -\frac{N}{q'} }\cdot 
	|b_{1,3^{k+1}Q}-b_{1,Q}||b_{2,3^{k+1}Q}-b_{2,Q}|}{{\left(\int_{3^{k+1} Q} 
		v_{\vec{w}}\right)^{1 / p}}}\prod_{i=1}^{2}\left\|f_{i} \chi_{3^{k+1} 
	Q}\right\|_{L^{p_i} (w_i)}\\
&\le C\prod_{j=1}^{2}\left \| b_j \right \|_{BMO_{\tilde{\theta}} 
	(\varphi )}\sum_{k=1}^{\infty } 
\frac{\varphi\left(3^{k+1} 
	Q\right)^{\frac{2\theta }{q'} 
		-\frac{N}{q'}+\frac{\tilde{s}(p_1-q')}{p_1q'}+\frac{\tilde{s}(p_2-q')}{p_2q'}}
		 }{v_{\vec 
		w}(3^{k+1}Q)^{\frac{1}{p} }}\prod_{i=1}^{2}\left\|f_{i} \chi_{3^{k+1} 
	Q}\right\|_{L^{p_i} (w_i)}\\
&\quad + C \prod_{j=1}^{2}\left \| b_j \right \|_{BMO_{\tilde{\theta}} 
	(\varphi )}\sum_{k=1}^{\infty }  \frac{k\varphi\left(3^{k+1} 
	Q\right)^{\tilde{\theta }+\frac{2\theta }{q'} -\frac{N}{q'} 
		+\frac{\tilde{s}(p_{1}-q')}{p_{1}q'}}}{v_{\vec 
		w}(3^{k+1}Q)^{\frac{1}{p} }}\prod_{i=1}^{2}\left\|f_{i} \chi_{3^{k+1} 
	Q}\right\|_{L^{p_i} (w_i)} \\
&\quad + C\prod_{j=1}^{2}\left \| b_j \right \|_{BMO_{\tilde{\theta}} 
	(\varphi )}\sum_{k=1}^{\infty } \frac{k\varphi\left(3^{k+1} 
	Q\right)^{\tilde{\theta }+\frac{2\theta }{q'} -\frac{N}{q'} 
		+\frac{\tilde{s}(p_2-q')}{p_2q'}}}{v_{\vec 
		w}(3^{k+1}Q)^{\frac{1}{p} }}\prod_{i=1}^{2}\left\|f_{i} \chi_{3^{k+1} 
	Q}\right\|_{L^{p_i} (w_i)} \\
&\quad + C\prod_{j=1}^{2}\left \| b_j \right \|_{BMO_{\tilde{\theta}} 
	(\varphi )}\sum_{k=1}^{\infty } 
\frac{k^{2}\varphi\left(3^{k+1} 
	Q\right)^{\frac{2\theta }{q'} -\frac{N}{q'} +2  \tilde{\theta} }}{v_{\vec 
		w}(3^{k+1}Q)^{\frac{1}{p} }}\prod_{i=1}^{2}\left\|f_{i} \chi_{3^{k+1} 
	Q}\right\|_{L^{p_i} (w_i)}.
\end{align*}
Combining the above estimates, we conclude that
\begin{align*}
&\left ( \int_{Q}|\mathrm{I}_{24}^{\alpha _1,\alpha _2}|^p v_{\vec w}(x) dx \right ) ^\frac{1}{p} \\
&\le \left ( \int_{Q} v_{\vec w}(x) 
dx \right ) ^\frac{1}{p}\mathop{\sup}\limits_{ x\in {\rm Q}}\left | 
T((b_1-\lambda _1)f_{1}^{\alpha_1},(b_2-\lambda _2)f_{2}^{\alpha_2})(x) \right 
| \\
&\le C\prod_{j=1}^{2}\left \| b_j \right \|_{BMO_{\tilde{\theta}} 
	(\varphi )}\sum_{k=1}^{\infty } \varphi\left(3^{k+1} 
Q\right)^{\frac{2\theta }{q'} 
	-\frac{N}{q'}+\frac{\tilde{s}(p_1-q')}{p_1q'}+\frac{\tilde{s}(p_2-q')}{p_2q'}
}
 \frac{v_{\vec w}(Q)^{\frac{1}{p} } }{v_{\vec 
		w}(3^{k+1}Q)^{\frac{1}{p} }}\prod_{i=1}^{2}\left\|f_{i} \chi_{3^{k+1} 
		Q}\right\|_{L^{p_i} (w_i)} \\
&\quad + C \prod_{j=1}^{2}\left \| b_j \right \|_{BMO_{\tilde{\theta}} 
	(\varphi )}\sum_{k=1}^{\infty } k\varphi\left(3^{k+1} 
Q\right)^{\tilde{\theta }+\frac{2\theta }{q'} -\frac{N}{q'} 
+\frac{\tilde{s}(p_1-q')}{p_1q'}   
}
 \frac{v_{\vec w}(Q)^{\frac{1}{p} } }{v_{\vec 
		w}(3^{k+1}Q)^{\frac{1}{p} }}\prod_{i=1}^{2}\left\|f_{i} \chi_{3^{k+1} 
		Q}\right\|_{L^{p_i} (w_i)}\\
&\quad + C\prod_{j=1}^{2}\left \| b_j \right \|_{BMO_{\tilde{\theta}} 
	(\varphi )}\sum_{k=1}^{\infty } k\varphi\left(3^{k+1} 
Q\right)^{\tilde{\theta }+\frac{2\theta }{q'} -\frac{N}{q'} 
+\frac{\tilde{s}(p_2-q')}{p_2q'}   
}
\frac{v_{\vec w}(Q)^{\frac{1}{p} } }{v_{\vec 
		w}(3^{k+1}Q)^{\frac{1}{p} }}\prod_{i=1}^{2}\left\|f_{i} \chi_{3^{k+1} 
		Q}\right\|_{L^{p_i} (w_i)}\\
&\quad + C\prod_{j=1}^{2}\left \| b_j \right \|_{BMO_{\tilde{\theta}} 
	(\varphi )}\sum_{k=1}^{\infty } k^{2}\varphi\left(3^{k+1} 
Q\right)^{\frac{2\theta }{q'} -\frac{N}{q'} +2  \tilde{\theta} 
}
 \frac{v_{\vec w}(Q)^{\frac{1}{p} } }{v_{\vec 
w}(3^{k+1}Q)^{\frac{1}{p} }}\prod_{i=1}^{2}\left\|f_{i} \chi_{3^{k+1} 
Q}\right\|_{L^{p_i} (w_i)}.
\end{align*}

Multiplying both sides of the above formula by $ \varphi(Q)^{\alpha} 
v_{\vec{w}}(Q)^{-\lambda}$, using Lemma \ref{Lemma3.3} and  Lemma 
\ref{Lemma3.8}, taking $$N\ge \mathop{\max} \left \{ 2q'(|\alpha |-\alpha 
),4\theta +4\tilde{s}+\frac{2(\eta -\tilde{s})q'}{p} ,4\theta +2\tilde{\theta 
}q'+2\tilde{s}+\frac{2\eta q'}{p} , 4\theta +4\tilde{\theta }q'+\frac{2\eta 
q'}{p} \right \},$$ we have
\begin{align*}
&\varphi(Q)^{\alpha} v_{\vec{w}}(Q)^{-\lambda}||\mathrm{I}_{24}^{\alpha _1,\alpha _2} \chi 
_{Q(z,r)}||_{L^{p}(v_{\vec w})} \\
& \quad \le C\prod_{j=1}^{2}\left \| b_j \right 
\|_{BMO_{\tilde{\theta}} (\varphi )} \sum_{k=1}^{\infty }\frac{k^2}{3^{nk\delta 
		/p}}  \varphi(3^{k+1}Q)^{\alpha+\frac{2p\theta \lambda}{q'}  } 
v_{\vec{w}}(3^{k+1}Q)^{-\lambda}\prod_{i=1}^{2}\left\| 
f_i\chi_{3^{k+1}Q}\right\|_{L^{p_{i}}\left(w_{i}\right)}.
\end{align*}
Summing up the above estimates, applying the same method of the formula \eqref{4.7}, we have
\begin{align*}
	&\left\| \varphi(Q)^{\alpha} v_{\vec{w}}(Q)^{-\lambda} ||T_{\prod \vec b}(\vec f)\chi _{Q(z,r)}||_{L^{p}(v_{\vec w}) }   
	\right\|_{L^{l}\left(\mathbb{R}^{n}\right)} \\
	& \leq C\prod_{j=1}^{2}\left \| b_j \right 
	\|_{BMO_{\tilde{\theta}} (\varphi )} \sum_{k=0}^{\infty }\frac{(k+1)^2}{3^{nk\delta 
			/p}}  \left \| \varphi(3^{k+1}Q)^{\alpha+\frac{2p\theta \lambda}{q'}  } 
	v_{\vec{w}}(3^{k+1}Q)^{-\lambda}\prod_{i=1}^{2}\left\| 
	f_i\chi_{3^{k+1}Q}\right\|_{L^{p_{i}}\left(w_{i}\right)} \right \| _{L^{l}\left(\mathbb{R}^{n}\right)}.
\end{align*}
Since $ \beta =\alpha +2p\theta \lambda /q'$, the proof of Theorem \ref{Theorem2.4} is finished.
\end{proof}

\section*{References}
\begin{enumerate}
\setlength{\itemsep}{-2pt}
\bibitem[1]{b1}B. Bongioanni, E. Harboure and O. Salinas, Commutators of Riesz 
transforms related to Schr\"{o}dinger operators, {J. Fourier Anal. Appl.}  
{17} (2011), 115-134.

\bibitem[2]{b2}T. Bui, New class of multiple weights and new weighted 
inequalities for multilinear operators, {Forum Math.} {27} (2015), 995-1023.

\bibitem[3]{b3}M. Cao, M. Hormozi, G. Iba\~{n}ez-Firnkorn, I. Rivera-R\'{i}os, 
Z. Si and K. Yabuta, Weak and strong type estimates for the multilinear 
Littlewood-Paley operators, {J. Fourier Anal. Appl.} {27} (2021),  Paper No. 
62, 42 pp.

\bibitem[4]{b4}X. Chen, Q. Xue and K. Yabutam, On multilinear Littlewood-Paley 
operators,  {Nonlinear Anal.}  {115} (2015), 25-40.

\bibitem[5]{b5}E. Fabes, D. Jerison and C. Kenig, Multilinear square 
functions and partial differential equations, {Amer. J. Math.}  {107} (1985), 
1325-1368.

\bibitem[6]{b6}I. Fofana, Study of a class of function spaces containing 
Lorentz spaces, {Afrika Mat.} {1} (1988), 29-50.

\bibitem[7]{b7}J. Garc\'{i}a-Cuerva and J. Rubio de Francia, Weighted 
Norm Inequalities and Related Topics, {North-Holland Mathematics Studies.} 
{116} (1985).  

\bibitem[8]{b8}L. Grafakos, P. Mohanty and S. Shrivastava, Multilinear 
square functions and multiple weights, {Math. Scand.} {124} (2019),  
149-160.

\bibitem[9]{b9}Q. Guo and J. Zhou, Compactness of commutators of 
pseudo-differential operators with smooth symbols on weighted Lebesgue spaces, 
{J. Pseudo-Differ. Oper. Appl.} {10} (2019), 557-569.

\bibitem[10]{b10}X. Hu and J. Zhou, Pseudodifferential operators with smooth 
symbols and their commutators on weighted Morrey spaces, {J. Pseudo-Differ. 
	Oper. Appl.} {9} (2018), 215-227.

\bibitem[11]{b11}Y. Komori and S. Shirai, Weighted Morrey spaces and a singular 
integral operator, {Math. Nachr.} {282} (2009), 219-231.

\bibitem[12]{b12}A. Lerner, S. Ombrosi, C. P\'{e}rez, R. Torres and R. 
Trujillo-Gonz\'alez, New maximal functions and multiple weights for the 
multilinear Calder\'{o}n-Zygmund theory,  {Adv. Math.} {220} (2009),  1222-1264.

\bibitem[13]{b13}C. Li, S. Yang and Y. Lin, Multilinear square operators meet 
new weight functions, arXiv: 2303.11704.

\bibitem[14]{b14}Y. Lin, Z. Liu, C. Xu and Z. Ren, Weighted estimates 
for Toeplitz operators related to pseudodifferential operators, {J. Funct. 
	Spaces 2016,} Art. ID 1084859, 19 pp.

\bibitem[15]{b15}G. Lu and P. Zhang, Multilinear Calder\'{o}n-Zygmund 
operators with kernels of Dini's type and applications, {Nonlinear Anal.} 
{107}  (2014), 92-117.

\bibitem[16]{b16}C. Morrey, On the solutions of quasi-linear elliptic partial 
differential equations, {Trans. Amer. Math. Soc.} {43} (1938), 126-166.

\bibitem[17]{b17}B. Muckenhoupt, Weighted norm inequalities for the Hardy 
maximal functions, {Trans. Amer. Math. Soc.} {165} (1972), 207-226.

\bibitem[18]{b18}G. Pan and L. Tang, Boundedness for some Schr\"{o}dinger 
type operators on weighted Morrey spaces. {J. Funct. Spaces 2014,} Art. ID 
878629, 10 pp.

\bibitem[19]{b19}G. Pan and L. Tang, New weighted norm inequalities for 
certain classes of multilinear operators and their iterated commutators, 
{Potential Anal.} {43} (2015), 371-398.

\bibitem[20]{b20}J. Peetre, On the theory of  $\mathcal{L} _{p,\lambda }$ 
spaces, {J. Funct. Anal.} {4} (1969), 71-87.

\bibitem[21]{b21}Z. Si, Multiple weighted estimates for vector-valued 
commutators of multilinear square functions, {J. Nonlinear Sci. Appl.} {10} 
(2017), 3059-3066.

\bibitem[22]{b22}Z. Si and Q. Xue, Multilinear square functions with 
kernels of Dini’s type, {J. Funct. Spaces 2016,} Art. ID 4876146, 11 pp.

\bibitem[23]{b23}Z. Si and Q. Xue, Estimates for iterated commutators of 
multilinear square fucntions with Dini-type kernels, {J. Inequal. Appl. 2018,} 
Paper No. 188, 21 pp.

\bibitem[24]{b24}L. Tang, Weighted norm inequalities for pseudo-differential 
operators with smooth symbols and their commutators, {J. Funct. Anal.} {262} 
(2012), 1603-1629.

\bibitem[25]{b25}L. Tang and J. Dong, Boundedness for some Schr\"{o}dinger 
type operators on Morrey spaces related to certain nonnegative potentials, {J. 
	Math. Anal. Appl.} {355} (2009), 101-109.

\bibitem[26]{b26}N. Trong and L. Truong, Generalized Morrey spaces 
associated to Schr\"{o}dinger operators and applications, {Czechoslovak Math. 
	J.} {68} (2018), 953-986.

\bibitem[27]{b27}Q. Xue and J. Yan, On multilinear square function and its 
applications to multilinear Littlewood-Paley operators with non-convolution 
type kernels, {J. Math. Anal. Appl.} {422} (2015), 1342-1362.

\bibitem[28]{b28}N. Zhao and J. Zhou, New Weighted Norm Inequalities for 
Certain Classes of Multilinear Operators on Morrey-type Spaces, {Acta Math. 
	Sin.   (Engl. Ser.) } {37} (2021), 911-925.

\bibitem[29]{b29}Y. Zhao and J. Zhou, New weighted norm inequalities for 
multilinear Calder\'{o}n-Zygmund operators with kernels of Dini’s type and 
their commutators, {J. Inequal. Appl.} {2021}, Paper No. 29, 23 pp.

\end{enumerate}

\bigskip

\noindent Chunliang Li, Shuhui Yang and Yan Lin    

\smallskip

\noindent School of Science, China University of Mining and
Technology, Beijing 100083,  People's Republic of China

\smallskip

\noindent{\it E-mails:} \texttt{lichunliang@student.cumtb.edu.cn} (C. 
Li)

\noindent\phantom{{\it E-mails:} }\texttt{yangshuhui@student.cumtb.edu.cn} (S. Yang)

\noindent\phantom{{\it E-mails:} }\texttt{linyan@cumtb.edu.cn} (Y. Lin)

\end{document}